\newtheorem{theorem}{Theorem}[section]
\newtheorem{proposition}[theorem]{Proposition}
\theoremstyle{definition}
\newtheorem{definition}[theorem]{Definition}
\newtheorem{remark}[theorem]{Remark}
\newtheorem{example}[theorem]{Example}
\newtheorem{notation}[theorem]{Notation}
\numberwithin{equation}{section}
\def\Lie{\operatorname{Lie}\nolimits}
\def\JType{\operatorname{JType}\nolimits}
\def\Spec{\operatorname{Spec}\nolimits}
\def\Specm{\operatorname{Specm}\nolimits}
\def\Proj{\operatorname{Proj}\nolimits}
\def\Grass{\operatorname{Grass}\nolimits}
\def\LG{\operatorname{LG}\nolimits}
\def\Rad{\operatorname{Rad}\nolimits}
\def\Soc{\operatorname{Soc}\nolimits}
\def\Ker{\operatorname{Ker}\nolimits}
\def\Im{\operatorname{Im}\nolimits}
\def\Coh{\operatorname{Coh}\nolimits}
\def\HHH{\operatorname{H}\nolimits}
\def\Ext{\operatorname{Ext}\nolimits}
\def\Ann{\operatorname{Ann}\nolimits}
\def\End{\operatorname{End}\nolimits}
\def\Hom{\operatorname{Hom}\nolimits}
\def\PHom{\operatorname{PHom}\nolimits}
\def\SL{\operatorname{SL}\nolimits}
\def\GL{\operatorname{GL}\nolimits}
\def\Adj{\operatorname{Adj}\nolimits}
\def\rk{\operatorname{rk}\nolimits}
\def\dim{\operatorname{dim}\nolimits}
\def\stmod{\operatorname{stmod}\nolimits}
\def\ad{\operatorname{ad}\nolimits}
\def\res{\operatorname{res}\nolimits}
\def\id{\operatorname{id}\nolimits}
\newcommand{\cKer}{\mathcal K\text{\it er}\,}
\newcommand{\cIm}{\mathcal I\text{\it m}\,}
\newcommand{\cCoker}{\mathcal C\text{\it oker}\,}
\newcommand{\bA}{\mathbb A}
\newcommand{\bE}{\mathbb E}
\newcommand{\bF}{\mathbb F}
\newcommand{\bG}{\mathbb G}
\newcommand{\bP}{\mathbb P}
\newcommand{\Z}{\mathbb Z}
\newcommand{\cC}{\mathcal C}
\newcommand{\cF}{\mathcal F}
\newcommand{\cH}{\mathcal H}
\newcommand{\cI}{\mathcal I}
\newcommand{\cN}{\mathcal N}
\newcommand{\cO}{\mathcal O}
\newcommand{\cU}{\mathcal U}
\newcommand{\fg}{\mathfrak g}
\newcommand{\fu}{\mathfrak u}
\newcommand{\gl} {\mathfrak {gl}}
\newcommand{\fsl} {\mathfrak {sl}}
\newcommand{\fso} {\mathfrak {so}}
\newcommand{\fsp} {\mathfrak {sp}}
\newcommand{\wt}{\widetilde}
\newcommand{\bu}{\bullet}
\title[Representations and cohomology of finite group schemes]{Representations and cohomology of finite group schemes}
\author[Julia Pevtsova]{Julia Pevtsova\thanks{partially supported by the National Science Foundation award DMS-0953011}}
\begin{document}

\begin{abstract} 
This is a survey article covering developments in representation theory of finite group schemes over the last fifteen years. 
We start with the finite generation of cohomology of a finite group scheme and proceed to discuss various consequences and 
theories that ultimately grew  out of that result. This includes the theory of one-parameter subgroups and rank varieties for 
infinitesimal group schemes; the  $\pi$-points and  $\Pi$-support spaces for finite group schemes, modules of constant rank and constant Jordan type, and construction of bundles on varieties closely related to $\Proj \HHH^\bu(G,k)$ for an infinitesimal group scheme $G$. The material is mostly complementary to the article of D. Benson on elementary abelian $p$-groups in the same volume; we concentrate on the aspects of the theory which either hold generally for any finite group scheme or are specific to finite group schemes which are not finite groups. In the last section we discuss varieties of elementary subalgebras of modular Lie algebras, generalizations of modules of constant Jordan type, and new constructions of bundles on projective varieties associated to a modular Lie algebra. 
\end{abstract}

\begin{classification}
Primary 20C20; Secondary 16G10, 20G10.
\end{classification}

\begin{keywords}
Modular Lie algebra, finite group scheme, support variety, rank variety, Jordan type, one-parameter subgroup, vector bundles\end{keywords}

\maketitle


\section{Introduction}
This survey  is based on the lecture given by the author at the International Conference on Representations of Algebras  in Bielefeld in 2012. Jon Carlson gave a talk on this subject at ICRA XII in Toru\'n in 2007 with the survey \cite{Car08} published in the same series.  In the current article we try to pick up where Carlson left off although some overlaps to set the stage were unavoidable.  We also focus on the general case of a finite group scheme as opposed to a finite group highlighted in \cite{Car08}.  

Even though this is an article about finite group schemes which is a much more general class than finite groups, we start by recalling the foundations of the classical theory of support varieties which motivated later developments featured in this survey.    In his seminal papers \cite{Q}, Quillen gave a general description of the maximal ideal spectrum of the commutative algebra $\HHH^{ev}(G; k)$ for a finite group $G$ in terms
of the elementary abelian $p$-groups of $G$. Following work of Alperin and Evens \cite{AE}, Carlson extended Quillen's work to introduce and study the {\it support variety} of a finite dimensional $G$-module, a closed conical subvariety of $\Spec \HHH^{ev}(G; k)$ which became an important invariant of modular representations. An essential feature of the support variety of a module of an elementary abelian $p$-group $E$, conjectured by Carlson  \cite{Car} and proved by Avrunin-Scott \cite{AS}, is that it admits a description in terms of cyclic shifted subgroups of the group algebra $kE$ with no recourse to cohomology. Such a description is referred to as ``rank variety".   

Representations of a finite group scheme  are equivalent to modules of its group algebra (also known as the algebra of measures) which is a finite dimensional cocommutative Hopf algebra. Hence, from the representation theoretic point of view, finite group schemes fall between finite groups and finite dimensional Hopf algebras. Some of the formalism involving cohomology, such as the definition of support variety originally developed for finite groups, works equally well for any finite dimensional Hopf algebra.  But any argument that uses reduction to Sylow subgroups or any deeper ``local-to-global" methods from finite group theory usually fails miserably for other classes of finite group schemes.  From the point of view of a Hopf algebraist this is not surprising: group algebras of finite groups are generated by the group-like elements whereas group algebras corresponding to other examples of finite group schemes, such as restricted enveloping algebras, can be primitively generated. Hence, they can exhibit behaviour very different from finite groups.  A case in point is finite generation of the cohomology algebra of a finite group scheme with coefficients in a field $k$ of positive characteristic $p$.  For finite groups this was known since late 50s - early 60s, the reduction to  a Sylow subgroup being a quick and easy step in the algebraic proof due to L. Evens \cite{E}.   For restricted Lie algebras finite generation of cohomology was proven in the 80s (\cite{FPar2}, see also \cite{AJ})  with the maximal unipotent  subalgebra playing a rather different role from a Sylow $p$-subgroup and the passage from the unipotent to the general case being highly non-trivial. The finite generation of cohomology of any finite group scheme was proven in a celebrated theorem of Friedlander and Suslin in 1995 (\cite{FS}).  Whether  the cohomology ring of any finite dimensional Hopf algebra with trivial coefficients  is finitely generated is still an open problem.   

The Friedlander-Suslin theorem on finite generation of cohomology unlocked the way for the applications of powerful geometric machinery to the study of representations and cohomology of a finite group scheme as pioneered by Quillen for finite groups Suslin-Friedlander-Bendel developed a theory of rank varieties for infinitesimal group schemes based on one-parameter subgroups. They identified the spectrum of cohomology with the variety of one-parameter subgroups, and proved an analogue of the Avrunin-Scott theorem. Friedlander and the author subsequently unified the existing theories for finite groups and infinitesimal group schemes  by introducing the notion of a $\pi$-point and $\Pi$-space. One interpretation of the main theorem of the theory of $\pi$-points, Theorem~\ref{thm:main}, is that it generalizes Quillen stratification and the Avrunin-Scott theorem to any finite group scheme.  

A byproduct of the theory of $\pi$-points was the discovery of modules of constant Jordan type (\cite{CFP}) which was an emerging subject at the time of publication of Carlson's article \cite{Car08}. There is now extensive literature on modules of constant Jordan type  some of which is mentioned in the references although it seemed impossible to list every contribution. In particular, D. Benson is currently writing a book on the subject which is available in electronic form. 

The development that followed modules of constant Jordan type was construction of vector bundles associated to these modules. Somewhat surprisingly, the roles   of different classes of finite group schemes got reversed at this stage. The more geometric nature of infinitesimal group schemes allows for construction of vector bundles on projective varieties of one-parameter subgroups whereas it is unknown whether an analogous construction exists for finite groups.  One may ask ``what about elementary abelian $p$-groups?", the subject of \cite{Ben2}. As we argue in Example~\ref{ex:elem}, from the representation-theoretic perspective, an elementary abelian  $p$-group $E$ is a very special case of an infinitesimal group scheme. Hence, constructions of \cite{FP12} apply. Focusing  on the case of an elementary abelian $p$-group brings considerable advantages: for example, $\Proj \HHH^{\rm ev}(E,k)$ is  a projective space which provides for many special geometric tools and leads to stronger results than in the general case of an infinitesimal group scheme.  We refer the reader to D. Benson's article for material specific to the elementary abelian case. 

The organization of the paper follows the evolution of the subject. In the first two sections we recall the definitions and basic properties of finite group schemes, their representations, and cohomology. We give an overview of the scheme of one-parameter subgroups $V(G)$ for an infinitesimal group scheme $G$ which plays an important role in the construction of the global $p$-nilpotent operator $\Theta$ in Section~\ref{sec:theta}. Section~\ref{sec:pi} is a dense summary of the theory of $\pi$-points with applications to classification of thick tensor ideal subcategories of $\stmod G$ and determination of the representation type of $G$. In Section~\ref{sec:jordan}
we introduce modules of constant rank and constant Jordan type  and construct non-maximal rank varieties.  This section  is somewhat  skimpy and short on examples due to the fact that the material is covered in Benson's article in the same volume.   One can also find more on the properties of non-maximal support varieties and modules of constant Jordan type in Carlson's article \cite{Car08} as well as the original papers. 

In Section~\ref{sec:theta} we sketch the construction of the global nilpotent operator $\Theta$ that acts on $M \otimes k[V(G)]$ for any module $M$ of an infinitesimal group scheme $G$. 
The operator $\Theta$ allows us to associate coherent sheaves on  $\Proj  V(G)$  (which is homeomorphic to $\Proj \HHH^\bu(G,k)$) to any representation of $G$. For representations of constant rank the associated sheaves turn out to be vector bundles.  
In the same section we list some properties of these bundles and calculations for the restricted Lie algebra $\fsl_2$.

In the final section we summarize the latest results of Carlson, Friedlander and the author  from \cite{CFP2}, \cite{CFP3}. We focus on a restricted Lie algebra $\fg$ and replace the variety $\Proj V(G)$ with the variety $\bE(r, \fg)$ which is the moduli space of the ``elementary Lie subalgebras"   of $\fg$  of dimension $r$. If $V(G)$ was the variety of one-parameter subgroups, then $\bE(r,\fg)$ can be viewed as the variety of ``$r$-parameter" subalgebras. These varieties are interesting geometric invariants of  $\fg$ on their own right. The beginning of Section~\ref{sec:elem} provides some information on what is known about them; many questions remain unanswered. Using restrictions to elementary subalgebras instead of one-parameter subgroups, we define modules of constant $(r,j)$-radical or socle rank (or type) that generalize modules of constant rank (or constant Jordan type).  Replacing the operator $\Theta$ of Section~\ref{sec:theta} with a more sophisticated construction we can associate vector bundles on $\bE(r, \fg)$ to modules of constant $(r,j)$-radical or socle rank. We finish the article with examples to illustrate our constructions.  

Throughout, $k$ is assumed to be an algebraically closed field of characteristic $p$. The assumption of algebraically closed can often be relaxed if one is willing  to work with schemes instead of varieties but we keep it to streamline the exposition. All vector spaces, algebras, and schemes are $k$-spaces, $k$-algebras, and $k$-schemes unless explicitly specified otherwise. The dual vector space of $M$ is denoted $M^\#$. For a field extension $K/k$, and a vector space $M$, we denote by $M_K=M \otimes_k K$ the base change from $k$ to $K$. For a scheme $X$, we set $X_K = X \times_{\Spec k} \Spec K$. 

\vspace{0.1in} 
The author would like to thank the organizers, particularly Henning Krause and Rolf Farnsteiner, for the invitation to the meeting and for letting her experience such a unique and invigorating event as ICRA. She would also like to thank Eric Friedlander for introducing her to the subject and for generously sharing his ideas over the years that they have worked on it together.

\section{Finite group schemes: definitions and examples} 
\label{sec:fgs} 
To define finite group schemes we take the ``functor of points" approach to the definition of an affine scheme such as the one taken in \cite{W}, \cite{Jan}, or \cite[Appendix]{EH}. We follow the terminology in \cite{W}.  

An affine scheme $X$ over a field $k$ is a representable $k$-functor from the category of commutative $k$-algebras to sets.  If $X$ is represented by the algebra $k[X]$, then as a functor it is given by the formula 
\[X(A) = \Hom_{k-alg}(k[X], A)\]
for any commutative $k$-algebra $A$.  The algebra $k[X]$ is the {\it coordinate algebra} or the {\it algebra of regular functions} of $X$. An affine scheme $X$ is {\it algebraic} if the coordinate algebra $k[X]$ is a finitely generated algebra.   By the Yoneda lemma, we have an anti-equivalence of categories:

\[\left\{\begin{array}{c}\text{affine algebraic}\\\text{schemes} \end{array}\right\} \quad \sim^{\rm op} \quad \left\{\begin{array}{c}\text{finitely generated}\\\text{commutative algebras} \end{array}\right\}\]

An {\it affine group scheme} $G$  is an affine scheme that takes values in the category of groups, that is, a representable functor from commutative $k$-algebras to groups. The coordinate algebra $k[G]$ is then a Hopf algebra and  the anti-equivalence of categories above restricts to the following:

\[\left\{\begin{array}{c}\text{affine algebraic}\\\text{group schemes} \end{array}\right\} \quad \sim^{\rm op} \quad \left\{\begin{array}{c}\text{finitely generated commu-}\\\text{tative Hopf algebras} \end{array}\right\}\]

For the purposes of this paper, all group schemes   are affine algebraic.   Moreover,  our main interest lies within the following class:

\begin{definition} 
A group scheme $G$ is {\it finite} if the coordinate algebra $k[G]$ is a finite dimensional $k$-algebra.
\end{definition} 
\noindent We have yet another anti-equivalence:
\begin{equation} 
\label{anti-equi}
\left\{\begin{array}{c}  \text{finite group}\\ \text{schemes} 
\end{array}\right\} \quad \sim^{\rm op} \quad  \left\{\begin{array}{c}  \text{finite dimensional {\bf commu-}}\\ \text{{\bf tative} Hopf algebras} 
\end{array}\right\}
\end{equation} 
 Motivated by Example~\ref{ex:finite} below, we use the notation 
 \[kG:=k[G]^\#\]
 and refer to this algebra as the {\it group algebra} of $G$.  Dualizing  the  anti-equivalence~\eqref{anti-equi} we arrive at the equivalence of categories that underlines the entire paper: 
 
\begin{equation} 
\label{equi}
\left\{\begin{array}{c}  \text{finite group}\\ \text{schemes} 
\end{array}\right\} \quad \sim \quad  \left\{\begin{array}{c}  \text{finite dimensional {\bf co-}}\\ \text{{\bf commutative} Hopf algebras} 
\end{array}\right\}
\end{equation}

One can define representations of a finite group scheme $G$ using the functorial perspective (see, for example, \cite[Ch. 3]{W}). The equivalence \eqref{equi} will then imply that there is an equivalence between the category of representations of $G$  and the category of $kG$-modules. In what follows, we shall use ``representation of $G$" and ``$kG$-module" interchangeably.

Since the category of $kG$-modules is abelian with enough injectives,  we can consider $\Ext^n_{G}(M,N) :=\Ext^n_{kG}(M,N)$, the $\Ext$-groups in the category of $kG$-modules for $n$ a non-negative integer. As usual, we set 
\[
\HHH^*(G,k) := \Ext^*_G(k,k) = \bigoplus\limits_{n \geq 0} \Ext^n_G(k,k), \quad   \HHH^*(G,M) := \Ext^*_G(k,M)
\]
where $k$ is the trivial module for $kG$ given by the augmentation (equivalently, counit)  map, and $M$ is any $kG$-module.

Since $kG$ is a finite dimensional Hopf algebra, it is self-injective (\cite{LS}, see also \cite[ch.5]{Jan}). Therefore, the category of $kG$-modules is Frobenius, that is, injective $kG$-modules are projective and vice versa (\cite{FW}). 

Due to the presence of the Hopf algebra structure on $kG$, there are two ways of defining the product on the cohomology algebra $\HHH^*(G,k)$: the cup product and the Yoneda product. The cup product is defined by tensoring projective resolutions and composing with the diagonal approximation map whereas the Yoneda product is given by splicing extensions. The existence and compatibility of these two products leads to the graded commutativity of $\HHH^*(G,k)$ by using a modified version of the classical {\it Eckmann-Hilton argument}.  One can find details, for example, in \cite[\S 3]{MPSW}.  

\begin{theorem}\label{th:graded}  
Let $G$ be a finite group scheme.  Then $\HHH^*(G,k)$ is a graded commutative $k$-algebra.  
\end{theorem}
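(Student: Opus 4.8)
The plan is to follow the route indicated just above the statement: equip $\HHH^*(G,k)=\Ext^*_{kG}(k,k)$ with two unital graded products, the Yoneda product $\circ$ and a cup product $\smile$ built from the Hopf structure, verify that they interact through a graded interchange law, and then invoke the graded Eckmann--Hilton argument. To set up the Yoneda product, fix a projective resolution $\varepsilon\colon P_\bu\to k$ of the trivial module, so that $\Ext^n_{kG}(k,k)=\HHH^n\big(\Hom_{kG}(P_\bu,k)\big)$; lifting a cocycle $f\colon P_m\to k$ to a $kG$-linear chain endomorphism of $P_\bu$ that lowers homological degree by $m$, and composing such chain maps, produces an associative product $\circ$ whose two-sided unit is $1\in\Ext^0_{kG}(k,k)=\Hom_{kG}(k,k)=k$, represented by $\id_{P_\bu}$.

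Next I would construct the cup product from the comultiplication $\Delta\colon kG\to kG\otimes_k kG$, which is an algebra map because $kG$ is a Hopf algebra. Each term of $P_\bu\otimes_k P_\bu$ is a projective $kG\otimes_k kG$-module, being a direct summand of a free module since a tensor product over $k$ of summands of free modules is a summand of a free module over the tensor algebra; and because $k$ is a field the Künneth formula gives $\HHH_\bu(P_\bu\otimes_k P_\bu)=k\otimes_k k=k$ concentrated in degree $0$. Thus $P_\bu\otimes_k P_\bu\to k$ is a projective resolution over $kG\otimes_k kG$, and restricting scalars along $\Delta$ and applying the comparison theorem yields a \emph{diagonal approximation} $\varphi\colon P_\bu\to P_\bu\otimes_k P_\bu$, a $kG$-linear chain map lifting $\id_k$, unique up to chain homotopy. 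For cocycles $f\colon P_m\to k$, $g\colon P_n\to k$ one sets $f\smile g$ to be the class of the composite $P_{m+n}\xrightarrow{\varphi}(P_\bu\otimes_k P_\bu)_{m+n}\to P_m\otimes_k P_n\xrightarrow{f\otimes g}k\otimes_k k=k$; coassociativity of $\Delta$ makes $\smile$ associative (up to homotopy, hence on cohomology), and the counit axiom makes it unital with the same unit $1$ as $\circ$.

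The technical heart, and the step I expect to be the main obstacle, is the compatibility of the two products, best phrased as the graded interchange identity
\[
(\alpha\smile\beta)\circ(\gamma\smile\delta)=(-1)^{|\beta|\,|\gamma|}\,(\alpha\circ\gamma)\smile(\beta\circ\delta)
\]
for homogeneous classes. This falls out of chasing the definitions through the diagonal approximation $\varphi$: both sides are assembled from the same composites of chain maps, reorganized by commuting graded maps past one another, so they agree up to the displayed Koszul sign and up to coherent chain homotopies that vanish on cohomology. Carrying this out amounts to checking that several squares of chain maps commute up to compatible homotopies while keeping careful track of Koszul signs; alternatively one may cite \cite[\S3]{MPSW}, where the argument is spelled out.

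Finally, the graded Eckmann--Hilton argument applies: a graded abelian group carrying two bilinear products that share a two-sided unit and satisfy the interchange identity above automatically has its two products equal, and the common product is associative and graded commutative, $\alpha\beta=(-1)^{|\alpha|\,|\beta|}\beta\alpha$. Applying this to $\circ$ and $\smile$ on $\HHH^*(G,k)$ gives that $\HHH^*(G,k)$ is a graded commutative $k$-algebra, as asserted. Observe that only the bialgebra structure of $kG$ enters the argument; cocommutativity plays no role.
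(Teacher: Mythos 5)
Your proposal is correct and follows exactly the route the paper takes: it sketches the same cup-product-versus-Yoneda-product construction via a diagonal approximation, the graded interchange law, and the graded Eckmann--Hilton argument, deferring the sign-chasing to \cite[\S 3]{MPSW} just as the paper does. Your closing observation that only the bialgebra structure (not cocommutativity) is used is also accurate and consistent with the generality of the cited reference.
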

\noindent We shall use the following notation throughout the paper:
$$
\HHH^\bu(G,k)=
\begin{cases}
\bigoplus\limits_{n \geq 0} \HHH^{2n}(G,k) & p>2 \\
\HHH^*(G,k) & p=2.
\end{cases}
$$
Theorem~\ref{th:graded} implies that $\HHH^\bu(G,k)$ is a commutative algebra. We next recall the key result of Friedlander and Suslin that allows one to employ techniques from algebraic geometry to study cohomology and representations of $G$. 

\begin{theorem}[Main theorem, \cite{FS}]\label{thm:fg} Let $G$ be a finite group scheme, and $M$  be a finite dimensional $kG$-module.  Then $\HHH^*(G,k)$ is a finitely generated $k$-algebra, and $\HHH^*(G,M)$ is a finitely generated module over $\HHH^*(G,k)$. 
\end{theorem}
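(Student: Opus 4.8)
The plan is to follow the strategy of \cite{FS}, which reduces the general case to the key case of the general linear group scheme $\GL_n$ and its Frobenius kernels, where one must produce enough explicit cohomology classes to force finite generation. First I would reduce to a manageable class of group schemes. Since $G$ is finite, it fits into a short exact sequence $1 \to G^0 \to G \to \pi_0(G) \to 1$ where $G^0$ is the infinitesimal (connected) part and $\pi_0(G)$ is a finite (étale) group. Using the Lyndon--Hochschild--Serre spectral sequence $\HHH^*(\pi_0(G), \HHH^*(G^0, k)) \Rightarrow \HHH^*(G,k)$ together with the classical Evens--Venkov finite generation for the finite group $\pi_0(G)$, one reduces the problem to showing that $\HHH^*(G^0,k)$ is finitely generated and that $\HHH^*(G^0, M)$ is a finitely generated module over it, i.e.\ to the case where $G$ is infinitesimal. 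An infinitesimal group scheme has a well-defined height $r$, meaning it is a closed subgroup scheme of $\GL_{n,(r)}$, the $r$-th Frobenius kernel of $\GL_n$, for suitable $n$; so by a further spectral-sequence/change-of-rings argument (using that $k\GL_{n,(r)}$ is free as a module over $kG$) it suffices to treat $G = \GL_{n,(r)}$ itself.

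The heart of the argument is then the construction, for each $r$, of enough cohomology classes on $\GL_{n,(r)}$. The decisive innovation of \cite{FS} is the introduction of the \emph{strict polynomial functors} and the computation of the cohomology of the Frobenius twists: one shows that $\HHH^*(\GL_{n,(r)}, \gl_n^{(m)})$ contains distinguished classes $e_r$ of cohomological degree $2p^{r-1}$ (detecting the iterated Frobenius twist), arising from the universal extension classes computed via the de Rham/Koszul complexes of the additive group. Concretely, one computes $\Ext^*_{\mathcal P}(I^{(r)}, I^{(r)})$ in the category of strict polynomial functors --- this is a polynomial-type algebra on explicit generators --- and transports this information to $\GL_{n,(r)}$-cohomology with coefficients in $\gl_n$ via evaluation. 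These classes, together with the coordinate ring of the Frobenius kernel, generate $\HHH^*(\GL_{n,(r)},k)$ over a Noetherian subring, giving finite generation of the cohomology ring; finite generation of $\HHH^*(\GL_{n,(r)}, M)$ for a finite-dimensional module $M$ then follows because $M$ embeds in an injective $\GL_{n,(r)}$-module built from $k[\GL_{n}]$-type pieces, and one runs the same detection on a filtration of $M$.

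Finally I would assemble the pieces: having finite generation for $\GL_{n,(r)}$, transfer it down to an arbitrary infinitesimal $G \hookrightarrow \GL_{n,(r)}$ by the Hochschild--Serre spectral sequence for the (non-normal, but the relevant freeness still applies) inclusion --- more precisely, one uses that $\HHH^*(G,k)$ is a subquotient-controlled module over $\HHH^*(\GL_{n,(r)},k)$ via the restriction map and a suitable Eilenberg--Moore or change-of-rings spectral sequence, and invokes that a module finitely generated over a Noetherian ring is Noetherian. Then reattach the component group via Hochschild--Serre as in the first step. The module statement for $M$ propagates through each of these reductions in parallel. I expect the main obstacle, and the genuinely hard part, to be the middle step: identifying the correct finite-dimensional coefficient modules (the Frobenius twists $\gl_n^{(m)}$) and actually computing $\Ext^*$ in the category of strict polynomial functors to exhibit the classes $e_r$ --- this is where \cite{FS} does its essential new work, and it is not something that can be shortcut by formal spectral-sequence manipulation. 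The reductions at the top and bottom are, by comparison, standard homological algebra once that computation is in hand.
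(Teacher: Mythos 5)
This is Theorem~\ref{thm:fg}, which the survey states with a citation to \cite{FS} and does not prove, so your proposal can only be measured against the actual Friedlander--Suslin argument. Your outline captures the global architecture and the genuinely hard ingredient: the connected--\'etale reduction via Lyndon--Hochschild--Serre together with Evens' theorem for the \'etale quotient, the embedding of an infinitesimal group scheme of height $r$ into $\GL_{n(r)}$, and above all the construction of the universal extension classes $e_r$ of degree $2p^{r-1}$ with coefficients in the Frobenius twists $\gl_n^{(r)}$ by means of strict polynomial functors and the hypercohomology of de Rham and Koszul complexes. That is indeed where the essential new work of \cite{FS} lies.

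The gap is in your descent step. You propose to prove finite generation for $\GL_{n(r)}$ itself and then ``transfer it down'' to a closed subgroup scheme $G\subset\GL_{n(r)}$ by a change-of-rings or Eilenberg--Moore spectral sequence, using that $k\GL_{n(r)}$ is free over $kG$. No such transfer exists: finite generation of $\HHH^*(H,k)$ for an ambient finite group scheme $H$ does not formally imply finite generation of $\HHH^*(G,k)$ for $G\subset H$, because the restriction map $\HHH^*(H,k)\to\HHH^*(G,k)$ need not be finite --- for finite groups that finiteness \emph{is} the Evens--Venkov theorem, proved with the norm map, and no norm map is available for general finite group schemes (the introduction of this survey makes exactly this point about the failure of Sylow-type reductions). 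What \cite{FS} actually does is the reverse: the universal classes $e_r$ are restricted to the given $G$ to produce a map from a finitely generated polynomial algebra into $\HHH^{ev}(G,k)$, and one shows that $\HHH^*(G,M)$ is Noetherian over its image by filtering $kG$ by powers of the augmentation ideal and checking that these restricted classes detect the polynomial generators of the cohomology of the associated graded (primitively generated, hence computable) Hopf algebra; the multiplicative filtration spectral sequence then carries Noetherianity back to $\HHH^*(G,M)$. This filtration-and-detection argument is the mechanism by which the universal classes force finite generation for an arbitrary $G$, and it is missing from your sketch; without it the final assembly does not go through.
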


We make some historical remarks about the finite generation of the cohomology  ring.   For $G$ a finite group with characteristic $k$ dividing the order of $G$, the finite generation of $\HHH^*(G,k)$ was proven by Golod \cite{Go}, Venkov \cite{V}, and Evens \cite{E} independently with Evens using purely algebraic techniques and adding the result about the module $\HHH^*(G,M)$.  It took another 35 years to prove the result for any finite group scheme.  It is conjectured that $\HHH^*(A,k)$  is finitely generated for {\it any} finite dimensional Hopf algebra $A$ (see, for example, \cite{EO} where it is formulated in the context of tensor categories), with partial progress made in \cite{GK}, \cite{BNPP}, and \cite{MPSW}. 

\vspace{0.1in} 
We finish this section with a definition of a (cohomological) support variety. Let $M$  be a finite dimensional $kG$-module. Then $\HHH^\bu(G,k) \simeq \Ext_G^\bu(k,k)$  acts on $\Ext_G^*(M,M)$ via Yoneda product: namely, for any extension $\xymatrix@=4mm{k \ar[r] & \ldots \ar[r] & k }$ and   
$\xymatrix@=4mm{M \ar[r] & \ldots \ar[r] & M }$ we tensor the first extension with $M$ and then concatenate.  This defines a pairing 
\[ \xymatrix@=6mm{\Ext_G^n(k,k) \times \Ext_G^m(M,M) \ar[r] & \Ext^{n+m}_G(M,M)} \] 
and then the (algebra) action
\[\xymatrix@=6mm{\Ext_G^\bu(k,k) \times \Ext_G^*(M,M) \ar[r] & \Ext_G^*(M,M)} \] 
(we refer the reader to \cite[I.2.6]{Ben}  for details.  See also \cite[I.3.2]{Ben}  where the same action is expressed in terms of cup product). 

\begin{definition}
\label{defn:supp}  Let $\cI_M = \Ann_{\HHH^\bu(G,k)}\Ext_G^*(M,M)$  be the annihilator  ideal of $\Ext_G^*(M,M)$ as a module over $\HHH^\bu(G,k)$.  
The (cohomological) support variety  of $M$, denoted $|G|_M$,  is a closed subset of $\Spec \HHH^\bu(G,k)$ defined by the  ideal $\cI_M$. 
It has a canonical structure of an affine variety corresponding to the reduced affine scheme $\Spec (\HHH^\bu(G,k)/\cI_M)_{\rm red}$.
\end{definition}   
The algebra $\HHH^\bu(G,k)$ is graded and the ideal $\cI_M$ is homogeneous. Hence, we can consider the ``projectivized" support variety of $M$. We denote by $\Proj |G|_M$ the closed points of the projective scheme $\Proj \left(\HHH^\bu(G,k)/\cI_M\right)$, that is, the subset of homogeneous prime ideals in $\HHH^\bu(G,k)$ of dimension one that contain $\cI_M$.

\subsection*{Examples of finite group schemes} 

\begin{example}[Finite groups]\label{ex:finite}  Let $G$   be a finite group and $kG$ be the group algebra. We define the associated {\it constant group functor} $\wt G$ as follows: for a commutative $k$-algebra $R$, 
\[
\xymatrix{\wt G(R) \ar@{=}[r]^-{\rm def} &  G^{\times |\pi_0(R)|}.} 
\]
where $\pi_0(R)$ is the set of connected components of $\Spec R$.
We have $k[\wt G] \simeq k^{\times |G|}$, and $k\wt G \simeq k G$. 
\end{example}

\begin{example}[Frobenius kernels]\label{ex:frob} Let 
$\xymatrix{f:k \ar[r]^-{\lambda \mapsto \lambda^p}& k}$ be the Frobenius map.  For a commutative $k$-algebra $A$ we define its {\it Frobenius twist} as a base change over the Frobenius map: $A^{(1)} :=A \otimes_f k$.  There is a $k$-linear algebra map  $F_A: A^{(1)} \to A$ given by $F_A(a \otimes \lambda)=\lambda a^p$. If $G$ is a group scheme, then the Frobenius twist $k[G]^{(1)} = k[G] \otimes_f k$  is again a  Hopf algebra over $k$ and, therefore, defines another group scheme $G^{(1)}$ which we call the {\it Frobenius twist} of $G$. The algebra map $F_{k[G]}: k[G^{(1)}] = k[G]^{(1)}  \to k[G]$ induces the {\it{Frobenius}} map of group schemes 
\[F = F_G: G \to G^{(1)}.\]   
\begin{definition}  The $r^{\rm th}$ Frobenius kernel of a group scheme $G$ is the \it {group scheme theoretic} kernel  of $r$ iterations of the Frobenius map:
\[ G_{(r)} = \Ker F^{r}: G \to G^{(r)}.\] 
\end{definition}

\begin{definition} A finite group scheme $G$ is called infinitesimal if  the coordinate algebra $k[G]$ is local. 
An infinitesimal finite group scheme $G$ is of height $\leq r$ if for any $x \in k[G]$ not a unit, $x^{p^r}=0$. An infinitesimal group scheme of height $r$ is an infinitesimal group scheme of height $\leq r$ but not of height $\leq r-1$. 
\end{definition} 
Frobenius kernels are examples of infinitesimal  group schemes. In particular, they are highly non-reduced: for any field extension $K/k$  there is only one $K$-point in $G_{(r)}$.  

If the group scheme $G$ is defined over the prime field $\bF_p$, there is an isomorphism $G^{(1)} \simeq G$.  By postcomposing with this isomorphism, we can consider the Frobenius map as a self map: $G \to G$.  We describe this explicitly for a particular example of $\GL_n$.  
\end{example}

\begin{example}[$\GL_n$]
Define the algebraic group (scheme) $\GL_n$ as
$$\GL_n(R) = \{\text{invertible } n \times n \text{ matrices over } R \}$$
for any commutative $k$-algebra $R$. Then $k[\GL_n] \simeq k[X_{ij}, \frac{1}{\det}]$, $1 \leq i,j \leq n$, with the Hopf algebra structure defined as follows: 

\vspace{0.1in}
\begin{tabular}{lll}{\it coproduct } & $\nabla: k[\GL_n] \to k[\GL_n] \otimes k[\GL_n]$ &  $\nabla(X_{ij}) = \sum\limits_{\ell=1}^n X_{i\ell}\otimes X_{\ell j}$,\\
{\it counit } & $\epsilon: k[\GL_n]\to k$ & $\epsilon(X_{ij})=\delta_{ij}$,\\
{\it antipode} & $S: k[\GL_n] \to k[\GL_n]$ & $S(X_{ij}) = \frac{\Adj(X_{ij})}{\det}$ 
\end{tabular}, 

\vspace{0.1in}
\noindent where $\Adj(X_{ij})$ is the {\it adjoint} matrix to $(X_{ij})$.
The Frobenius map is given explicitly by the formula:
\[
\xymatrix{F: \GL_n\ar[rr]^-{(a_{ij}) \to (a_{ij}^p)}&&  \GL_n}
\] 
Hence, as a group scheme, 
$$
\GL_{n(r)}(R)= \{(a_{ij})_{1\leq i,j\leq n}\, | \, a_{ij} \in R, a_{ij}^{p^r} = \delta_{ij} \}.
$$  
As an algebra,
$$k[\GL_{n(r)}] \simeq k[X_{ij}]/(X_{ij}^{p^r}-\delta_{ij}),$$
with the Hopf algebra structure inherited from $k[\GL_n]$. 

\end{example}

\begin{example}[$\bG_a$] The additive group $\bG_a$ is defined as follows:
\[\bG_a(R) := R^{+}, \text{ the additive group of the ring } R. \] 
We have $k[\bG_a] \simeq k[T]$ with the Hopf algebra structure determined by the following formulas:

\vspace{0.1in}
\begin{tabular}{lll}{\it coproduct } & $\nabla: k[T] \to k[T] \otimes k[T]$ &  $\nabla(T) = T \otimes 1 + 1 \otimes T$,\\
{\it counit } & $\epsilon: k[T]\to k$ & $\epsilon(T)=0$,\\
{\it antipode} & $S: k[T] \to k[T]$ & $S(T) = -T$. 
\end{tabular} 

\vspace{0.1in}
\noindent The Frobenius map is given by $\xymatrix{F(R): R^+\ar[r]^-{a \mapsto a^p}& R^+}$, and Frobenius  kernels have the following form:
\[
\bG_{a(r)}(R) = \{ a \in R \, | \, a^{p^r}=0\}.
\]
We have $k[\bG_{a(r)}] \simeq k[T]/T^{p^r}$ with the Hopf algebra structure inherited from $k[T]$, and 
\[
k\bG_{a(r)} \simeq k[u_0, \ldots, u_{r-1}]/(u_0^p, \ldots, u_{r-1}^p)
\]
 where the generators $u_0, \ldots, u_{r-1}$ are the linear duals to $T, T^p, \ldots, T^{p^{r-1}}$.

\end{example}

\begin{example}[Restricted Lie algebras] As we explain below, the example of restricted Lie algebras is a special case of Example~\ref{ex:frob}. 
\begin{definition} A Lie algebra $\fg$ over the field $k$ is called {\it restricted} if it is endowed with the $p^{\rm th}$ power operation 
$(-)^{[p]}: \fg \to \fg$ satisfying the following axioms: 

\vspace{0.1in} 
(1) $\ad(x^{[p]}) = \ad(x)^p$  for all $x \in \mathfrak g$,

(2)  $(\lambda x)^{[p]} = \lambda^px^{[p]}$ for $\lambda \in k$, $x \in \fg$, 

(3)  $(x+y)^{[p]} = x^{[p]} + y^{[p]} + \sum\limits_{i=1}^{p-1}\frac{s_i(x,y)}{i}$, where $s_i(x,y)$ 
is the coefficient of $t^{i - 1}$ in the formal expression $\ad(tx + y)^{p - 1}(x)$.
\end{definition}

Classical Lie algebras such as $\gl_n, \fsl_n, \fso_n, \fsp_{2n}$  defined over a field $k$ of characteristic $p$ are restricted, with the $[p]^{\rm th}$ power map being matrix multiplication. The Lie algebra $\Lie G$ of any group scheme $G$ over $k$  has a naturally defined $[p]^{\rm th}$ power operation (see \cite{W})  and, hence, is restricted.  

To any restricted Lie algebra $\fg$ over $k$ we associate its {\it restricted enveloping algebra}:
\[
\fu(\fg) = U(\fg)/ \langle x^p - x^{[p]}, x \in \fg \rangle
\] 
where $U(\fg)$  is the universal enveloping algebra of $\fg$. 
This is a finite  dimensional cocommutative Hopf algebra with the coproduct $\nabla: \fu(\fg) \to \fu(\fg) \otimes \fu(\fg)$ primitively generated; that is, $\nabla(x) =  x \otimes 1 + 1 \otimes x $ for any $x \in \fg$.  

A {\it restricted} representation of a restricted Lie algebra $\fg$ is a representation $M$ of $\fg$ as a Lie algebra which respects the $[p]^{\rm th}$ power structure. Namely, for $x\in \fg$, $m\in M$, 
$$
\underbrace{(x(x\ldots(x}_p m)\ldots) = x^{[p]}m.
$$ 
The structure of a restricted $\fg$ representation on a vector space $M$ is then equivalent to the structure of a $\fu(\fg)$-module. All Lie algebra representations in this paper are assumed to be restricted and will be referred to as $\fg$-modules. 

Using the equivalence \eqref{equi}, we can associate an infinitesimal group scheme of height 1 to any restricted Lie algebra $\fg$. Indeed, since $\fu(\fg)$ is a finite dimensional cocommutative Hopf algebra over $k$, its dual $\fu(\fg)^\#$ is a commutative finite dimensional Hopf algebra.  Hence, there exists a finite group scheme which we denote by $\wt \fg$ such that $k[\wt \fg] =  \fu(\fg)^\#$. One can show that $\wt \fg$ is necessarily an infinitesimal finite group scheme of height 1. Moreover, for any group scheme $G$, we have an isomorphism of  Hopf algebras
$k[G_{(1)}] \simeq \fu(\Lie G)^\#$. The associations $\fg \mapsto \wt \fg$ and $G \mapsto \Lie G$ define an equivalence of categories between the restricted Lie algebras and  finite infinitesimal  group schemes of height 1.  For any particular group scheme $G$ we have an equivalence of categories of representations of the Frobenius kernel $G_{(1)}$ and the restricted Lie algebra $\Lie G$:
\[\xymatrix{ \text{ Representations of } G_{(1)}  \ar@{<->}[r]^-\sim&  \fu(\Lie G) - {\rm mod}}.\]
(See \cite[II, \S 7, 3.9-3.12]{DG} for details.)
\end{example}

\begin{example}[Elementary abelian $p$-groups] \label{ex:elem}  Towards the end of the paper we focus on the case of a restricted Lie algebra or, equivalently,  finite infinitesimal group scheme of height 1. Here, we explain how from the point of view of representation theory elementary abelian $p$-groups can be considered in that framework. 

Let $E = {\Z/p}^{\times r}$ be an elementary abelian $p$-group of rank $r$. Let $\{g_1, \ldots, g_r\}$ 
be generators of $E$ and let $x_i = g_i-1 \in kE$. This choice of generators determines an isomorphism of algebras 
$$kE \simeq k[x_1, \ldots, x_r]/(x_1^p, \ldots, x_r^p).$$  

Let $\fg_a = \Lie \bG_a$ be the Lie algebra of the additive group scheme $\bG_a$. Then $\fu(\fg_a) \simeq k[t]/t^p$, and 
$\fu(\fg_a^{\oplus r}) \simeq k[x_1, \ldots, x_r]/(x_1^p, \ldots, x_r^p) \simeq kE$. Therefore, the categories of representations of $\fg_a^{\oplus r}$ and of an elementary abelian $p$-group of rank $r$ are equivalent as abelian categories. It should be noted that they do have different tensor products since the Hopf algebra structures on $kE$ and $\fu(\fg_a^{\oplus r})$ are different. 
\end{example}

\section{The spectrum of cohomology of a finite group scheme}
\label{sec:coh}
It would seem impossible to write about cohomology of finite group schemes without mentioning Quillen's fundamental stratification theorem for finite groups.  We start by recording the cohomology of an elementary abelian $p$-group. 
\begin{theorem}
\label{thm:elem} 
Let $E \simeq {\Z/p}^{\times r}$ be an elementary abelian $p$-group of rank $r$.  Then 
\[ 
\HHH^*(E,k)  \simeq \begin{cases}
k[x_1, \ldots ,x_r] \otimes \Lambda^*(y_1, \ldots, y_r) \text{ where } \deg x_i =2, \deg y_i=1 & p>2 \\
k[y_1, \ldots, y_r] \text{ where } \deg y_i=1 & p=2.
\end{cases}
\]
\end{theorem}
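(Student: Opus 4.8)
The plan is to compute $\HHH^*(E,k) = \Ext^*_{kE}(k,k)$ directly from the algebra structure $kE \simeq k[x_1,\dots,x_r]/(x_1^p,\dots,x_r^p)$, using the K\"unneth formula to reduce to the rank one case. Since $kE \simeq (k\Z/p)^{\otimes r}$ as Hopf algebras, and cohomology of a tensor product of augmented algebras satisfies $\HHH^*(A \otimes B, k) \simeq \HHH^*(A,k) \otimes \HHH^*(B,k)$ (the K\"unneth theorem, valid here because everything is over a field), it suffices to treat $E = \Z/p$ and then take the graded tensor product of $r$ copies, which produces exactly the stated polynomial-times-exterior (for $p>2$) or polynomial (for $p=2$) algebra on generators $x_i, y_i$ in the indicated degrees.

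First I would handle $p = 2$: here $k\Z/2 \simeq k[x]/x^2$, and one writes down the minimal free resolution of $k$ over $k[x]/x^2$, namely the $2$-periodic complex $\cdots \to k[x]/x^2 \xrightarrow{x} k[x]/x^2 \xrightarrow{x} k[x]/x^2 \to k \to 0$. Applying $\Hom_{k[x]/x^2}(-,k)$ gives a complex with zero differentials and one-dimensional terms in each degree, so $\Ext^n = k$ for all $n \geq 0$; identifying the Yoneda product shows $\HHH^*(\Z/2,k) \simeq k[y]$ with $\deg y = 1$. Then K\"unneth gives $k[y_1,\dots,y_r]$. For $p > 2$, the corresponding minimal resolution of $k$ over $k[x]/x^p$ is again $2$-periodic but alternates the multiplication maps $x$ and $x^{p-1}$: $\cdots \xrightarrow{x^{p-1}} k[x]/x^p \xrightarrow{x} k[x]/x^p \xrightarrow{x^{p-1}} k[x]/x^p \xrightarrow{x} k[x]/x^p \to k \to 0$. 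Again $\Ext^n = k$ in every degree; one checks the generator $y$ in degree $1$ satisfies $y^2 = 0$ (it comes from the "short" map) and that there is a polynomial generator $x$ in degree $2$ (a cohomology operation, e.g.\ a Bockstein, or simply the class represented by the $2$-periodicity), giving $\HHH^*(\Z/p,k) \simeq \Lambda(y) \otimes k[x]$. The graded tensor product over the $r$ factors then yields $k[x_1,\dots,x_r] \otimes \Lambda(y_1,\dots,y_r)$, with the sign rules of the graded-commutative structure (Theorem~\ref{th:graded}) forcing the $y_i$ to anticommute and square to zero while the $x_i$ are central polynomial generators.

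The step I expect to require the most care is not the rank one computation, which is classical, but verifying the multiplicative structure after taking the tensor product --- specifically, that the products of the degree-one and degree-two generators coming from different tensor factors behave as claimed, i.e.\ that the K\"unneth isomorphism is one of algebras for the cup product, and that no extra relations or hidden generators appear. This is where one invokes that the cup product on $\HHH^*(E,k)$ is induced by the coproduct of the Hopf algebra $kE$, that the coproduct is compatible with the tensor decomposition $kE \simeq (k\Z/p)^{\otimes r}$, and the graded commutativity from Theorem~\ref{th:graded}. Counting dimensions in each cohomological degree against the proposed answer (both sides being finite-dimensional in each degree) gives an additional consistency check that pins down the algebra structure completely. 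Alternatively, one can bypass the explicit resolution entirely by citing the standard computation of group cohomology via the bar construction or the Lyndon--Hochschild--Serre spectral sequence for $\Z/p$, but the resolution approach has the virtue of being self-contained and making the periodicity --- hence the polynomial generator --- manifest.
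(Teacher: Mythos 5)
Your proof is correct, and it is the standard argument for this classical computation; the paper simply records the theorem without proof (it is the well-known starting point for Quillen's work, found e.g.\ in Benson's book cited as \cite{Ben}). The two points you flag as needing care are exactly the right ones: the periodic minimal resolution of $k$ over $k[x]/x^p$ pins down the additive structure and the polynomial generator in degree $2$, and the multiplicativity of the K\"unneth isomorphism follows because the cup product is defined via the Hopf coproduct, which is compatible with the tensor decomposition $kE \simeq (k\Z/p)^{\otimes r}$. No gaps.
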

 \noindent Theorem~\ref{thm:elem} implies that for an elementary abelian $p$-group  $E$ of rank $r$ , 
\[\Spec \HHH^\bu(E,k) \simeq \bA^r,\] 
the affine $r$-space over $k$.

We give the Quillen stratification theorem here in the ``weak form" referring the reader to any of the excellent 
sources in the literature for further details (such as \cite[II.5]{Ben}). 

Let $G$ be a finite group.  For any elementary abelian $p$-subgroup $E$  of $G$ we have a restriction map on cohomology  $\res_{E, G} :\HHH^*(G, k) \to \HHH^*(E,k)$ and the induced map on spectra: $\res_{E, G}^*: \Spec \HHH^\bu(E, k) \to \Spec \HHH^\bu(G,k)$.  Then the weak form of Quillen stratification states that  
\[ \Spec \HHH^\bu(G,k) = \bigcup\limits_{E \subset G}  \res_{E, G}^*\Spec \HHH^\bu(E, k). \]
Moreover, 
\[\res_{E, G}^*\Spec \HHH^\bu(E, k) \simeq  (\Spec \HHH^\bu(E, k))/W_G(E),\]
where $W_G(E)$ is the  Weyl group of $E$ in $G$, that is, the normalizer of $E$ modulo the centralizer of $E$. 
Hence, geometrically $\Spec \HHH^\bu(G,k)$ is a union of finite quotients of affine spaces.  The ``strong form" of Quillen's theorem prescribes how this union is taken, or, equivalently, how the finite quotients of affine spaces stratify $\Spec \HHH^\bu(G,k)$.  We shall see next that for a restricted Lie algebra the situation is quite different.  

\vspace{0.1in}  Let $\fg$ be a restricted Lie algebra.  We denote by $\cN$ (or $\cN(\fg)$) the {\it nullcone} of $\fg$: the set of all nilpotent elements of $\fg$. 
It can be given the structure of a variety (or a reduced scheme) by considering it as a closed subset of the linear space $\fg \simeq \Spec S^*(\fg^\#)$. We also define the {\it restricted nullcone} of $\fg$ as the subset 
\[\cN_p = \{ x\in \fg \, | \, x^{[p]}=0 \} \] 
of $\fg$ consisting of all $[p]$-nilpotent elements.  Similarly to $\cN$, it has a structure of a variety considered as a closed subset of $\fg$.  

For a connected reductive group $G$ we denote by $h$ its Coxeter number. The close relationship between the nullcone and cohomology is revealed in the following theorem.  
\begin{theorem}[\cite{FPar}, \cite{AJ}]
Let $G$ be a connected reductive  group and assume that $p>h$. Let $\fg = \Lie G$.   Then $\HHH^{\rm odd}(\fg, k) = 0$ and 
\[\HHH^\bu(\fg,k)  \simeq k[\cN]^{(1)}.\]
\end{theorem}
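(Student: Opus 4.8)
The plan is to prove the isomorphism $\HHH^\bu(\fg,k)\simeq k[\cN]^{(1)}$ in two stages, first identifying the reduced spectra and then promoting this to a ring isomorphism by a degree/dimension count. The starting point is the general machinery of support varieties for the finite group scheme $\wt\fg = G_{(1)}$ together with the computation, due to Suslin--Friedlander--Bendel, that $\Spec\HHH^\bu(\fg,k)$ is naturally homeomorphic (in fact isomorphic as a scheme, up to a Frobenius twist and nilpotents) to the scheme $V(G_{(1)})$ of one-parameter subgroups of height $\leq 1$, which for a restricted Lie algebra is exactly the restricted nullcone $\cN_p(\fg)$. So the first genuine step is to observe $\Spec\HHH^\bu(\fg,k)_{\rm red}\cong \cN_p(\fg)$ as varieties, with the Frobenius twist accounting for the fact that cohomology classes in degree $2$ (or $1$ if $p=2$) restrict to $p$-th powers of linear coordinate functions on $\cN_p$.

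The second step is the heart of the matter: under the hypothesis $p > h$, show that $\cN_p(\fg) = \cN(\fg)$, i.e. every nilpotent element of $\fg$ is already $[p]$-nilpotent. This is where one invokes representation theory of reductive groups in good characteristic: when $p>h$, nilpotent orbits behave as in characteristic zero, and in particular for a nilpotent $x$ one can complete it to an $\fsl_2$-triple (by the Jacobson--Morozov theorem, valid for $p>h$), whence $x^{[p]}=0$ since inside $\fsl_2$ with $p>2$ the $p$-th power of the nilpotent generator vanishes; more robustly, $p>h$ forces the nilpotency degree of every element of $\cN$ to be less than $p$. Thus $\cN$ itself has the structure of a restricted subvariety and the homeomorphism of Step 1 reads $\Spec\HHH^\bu(\fg,k)_{\rm red}\cong \cN(\fg)$ as $k$-schemes, giving $k[\HHH^\bu(\fg,k)_{\rm red}] \cong k[\cN]^{(1)}$.

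The third step is to remove the word ``reduced'' and the word ``odd''. One shows $\HHH^{\rm odd}(\fg,k)=0$ and that $\HHH^\bu(\fg,k)$ has no nilpotents. The cleanest route is a dimension-shifting / spectral-sequence argument: the nullcone $\cN$ of a reductive Lie algebra with $p>h$ is a normal, Cohen--Macaulay (in fact a complete intersection, by Kostant's theorem transported to characteristic $p$) variety, so $k[\cN]$ is a reduced ring, and one checks that the natural map $k[\cN]^{(1)} \to \HHH^\bu(\fg,k)$ built from the one-parameter-subgroup construction is surjective with the right Krull dimension; comparing Poincar\'e series (using that $\fu(\fg)$ is a Frobenius algebra and that the cohomology is finitely generated by Theorem~\ref{thm:fg}) forces it to be an isomorphism and forces the odd part to vanish. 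I expect the main obstacle to be precisely this last point — verifying that there is no nilpotence in $\HHH^\bu(\fg,k)$ and that the odd cohomology vanishes — since the abstract finite-generation and support-variety results only control the reduced spectrum, and one must bring in the finer geometry of $\cN$ (normality, the Cohen--Macaulay property, Kostant's description of $k[\cN]$ as a free module over the ring of invariants) available only because $p>h$. An alternative to the spectral-sequence bookkeeping is to run the argument by first treating the Borel/unipotent case, where $\HHH^\bu$ is computed directly from a Koszul-type resolution, and then passing to $\fg$ via a Lyndon--Hochschild--Serre spectral sequence for $\fu(\fb)\subset\fu(\fg)$ and an induction argument, again using $p>h$ to guarantee degeneration.
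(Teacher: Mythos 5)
The paper offers no proof of this statement; it is quoted from \cite{FPar} and \cite{AJ}, so your proposal must be judged against the actual argument in those sources. Your Steps 1 and 2 are fine as far as they go: the Suslin--Friedlander--Bendel isogeny identifies $\Spec \HHH^\bu(\fg,k)$ with $\cN_p(\fg)$ up to an $F$-isomorphism, and $p\geq h$ does give $\cN_p=\cN$ (though the Jacobson--Morozov argument as you state it proves the wrong thing --- $x^{[p]}$ is computed in $\fg$, not in the copy of $\fsl_2$; the correct input is the bound on the height of the highest root). But this route only ever produces a homeomorphism of varieties, i.e.\ an isomorphism after killing nilpotents and inverting Frobenius, and the entire content of the theorem is precisely what is lost at that stage: that $\HHH^{\rm odd}=0$, that $\HHH^\bu$ is reduced, and that the map is an actual ring isomorphism rather than an isogeny. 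Your Step 3 cannot close this gap: ``comparing Poincar\'e series'' presupposes an independent computation of the Poincar\'e series of $\HHH^*(\fg,k)$, which is exactly the theorem being proved; knowing finite generation, self-injectivity of $\fu(\fg)$, and equality of Krull dimensions does not force a surjection of graded rings with homeomorphic spectra to be an isomorphism, nor does it rule out odd classes or nilpotents.

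The argument of Friedlander--Parshall and Andersen--Jantzen is essentially the ``alternative'' you relegate to your last sentence, and its key ingredients are the ones your sketch omits. One computes $\HHH^*(\fu_1,k)$ (for $\fu$ the nilradical of a Borel) from the May/Koszul spectral sequence $S^\bu(\fu^\#)^{(1)}\otimes\Lambda^\bu(\fu^\#)\Rightarrow \HHH^\bu(\fu_1,k)$, and the hypothesis $p>h$ enters to force degeneration so that $\HHH^{2\bu}(\fu_1,k)\simeq S^\bu(\fu^\#)^{(1)}$ with vanishing odd part; one then passes to $G_1$ via the spectral sequence $R^i\operatorname{ind}_B^G\HHH^j(B_1,k)\Rightarrow\HHH^{i+j}(G_1,k)$, using Kempf-type vanishing of the higher derived functors $R^{>0}\operatorname{ind}_B^G S^\bu(\fu^\#)$; finally one identifies $\operatorname{ind}_B^G S^\bu(\fu^\#)=k[G\times_B\fu]$ with $k[\cN]$ via the Springer resolution together with normality of $\cN$. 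None of these three steps is a formality, and the second and third are entirely absent from your plan. If you want to salvage your outline, you should discard the SFB-based Steps 1--2 (which are anachronistic here and in any case too coarse) and develop the Borel/induction route in detail.
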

\noindent Even though the isomorphism between algebras fails in general, the following theorem holds for any restricted Lie algebra:
\begin{theorem}[\cite{SFB2}]
\label{thm:lie}  For any restricted Lie algebra $\fg$ there is a homemorphism of varieties: 
\[\Spec \HHH^\bu(\fg,k) \simeq \cN_p.\]
\end{theorem}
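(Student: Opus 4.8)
The plan is to construct the homeomorphism $\Spec \HHH^\bu(\fg,k) \simeq \cN_p$ by realizing both sides as parametrizing the same data: height-one one-parameter subgroups of $\wt\fg$, equivalently homomorphisms $\bG_{a(1)} \to \wt\fg$ of group schemes, equivalently $[p]$-nilpotent elements of $\fg$. First I would set up the correspondence on points. Given a restricted Lie algebra $\fg$, a $[p]$-nilpotent element $x \in \cN_p$ determines a Hopf algebra map $\fu(\fg_a) = k[t]/t^p \to \fu(\fg)$ (since sending $t \mapsto x$ respects the $p$-th power: $t^{[p]} = 0 \mapsto x^{[p]} = 0$), hence a map $\bG_{a(1)} \to \wt\fg$; conversely such a map is determined by the image of $t$, which must be a $[p]$-nilpotent element. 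So $\cN_p$ is exactly the set of such one-parameter subgroups. On the cohomology side, one uses that $\HHH^\bu(\bG_{a(1)},k)$ is a polynomial algebra on one generator in degree $2$ (for $p>2$; the truncated polynomial part in odd degree is nilpotent hence invisible to $\Spec$ of $\HHH^\bu$), so $\Spec \HHH^\bu(\bG_{a(1)},k) \simeq \bA^1$, and a point of $\Spec \HHH^\bu(\fg,k)$ should correspond to a line through the origin — i.e.\ a one-parameter subgroup up to scalar — together with that is then enough, after accounting for scaling, to get a bijection with $\cN_p$ (which is a cone, so its points other than $0$ are lines with a choice of scale, and this matches nonzero cohomology classes; the origin matches the irrelevant/augmentation ideal).

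Next I would make this a morphism of schemes rather than just a bijection of points. The map $\fu(\fg_a) \to \fu(\fg)$ associated to a ``generic'' $[p]$-nilpotent element can be written down over the coordinate ring $k[\cN_p]$: there is a universal $[p]$-nilpotent element $x_{\rm univ} \in \fg \otimes k[\cN_p]$, giving a $k[\cN_p]$-algebra map $\fu(\fg_a) \otimes k[\cN_p] \to \fu(\fg) \otimes k[\cN_p]$ and hence a map on cohomology $\HHH^\bu(\fg,k) \to \HHH^\bu(\bG_{a(1)},k) \otimes k[\cN_p]$. Composing with the evaluation $\HHH^\bu(\bG_{a(1)},k) \to k$ that kills the positive-degree generator is too crude; instead one extracts from $\HHH^\bu(\fg,k) \to \HHH^\bu(\bG_{a(1)},k) \otimes k[\cN_p] = k[s] \otimes k[\cN_p]$ the piece landing in $s^{\deg/2} \otimes k[\cN_p]$, which assembles into a homogeneous algebra map $\psi: \HHH^\bu(\fg,k) \to k[\cN_p]$ (here one must normalize degrees and use that $\cN_p$ sits inside the linear space $\fg$, so $k[\cN_p]$ is a quotient of $S^\bu(\fg^\#)$, graded). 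Taking $\Spec$ gives a morphism $\cN_p \to \Spec \HHH^\bu(\fg,k)$. The first main task is to check this $\psi$ is well-defined and independent of choices, which is essentially functoriality of cohomology applied to the universal situation; I'd lean on the treatment of one-parameter subgroups and the map $\psi$ of Suslin--Friedlander--Bendel, here specialized to height one.

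Then I would prove the resulting morphism $\cN_p \to \Spec \HHH^\bu(\fg,k)$ is a homeomorphism. Surjectivity on closed points follows from the point-level discussion above combined with a detection statement: every maximal ideal of $\HHH^\bu(\fg,k)$ is the preimage under restriction along \emph{some} one-parameter subgroup $\bG_{a(1)} \to \wt\fg$ of a maximal ideal of $\HHH^\bu(\bG_{a(1)},k)$. This is the analogue, for the height-one case, of the statement that one-parameter subgroups detect cohomology; it can be deduced from Quillen-type arguments (the cohomology is detected on ``elementary'' subobjects) together with the classification of what those subobjects are — here, since $\fg$ is height one, the relevant detecting subalgebras are copies of $\fu(\fg_a)$ sitting inside $\fu(\fg)$ via $[p]$-nilpotent elements. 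Injectivity amounts to showing distinct lines in $\cN_p$ give distinct maximal ideals, which follows once one knows the composite $\cN_p \to \Spec \HHH^\bu(\fg,k) \to \Spec k[\cN_p]$ (the last map dual to $\psi$) is, up to the Frobenius twist, the identity on the underlying topological space — this is where $k[\cN_p]^{(1)}$ naturally appears and why one only gets a homeomorphism, not an isomorphism of schemes. Finally, properness/finiteness considerations (both spaces are conical affine varieties of the same dimension, the map is finite) upgrade the continuous bijection to a homeomorphism.

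The hard part will be establishing that one-parameter subgroups of height one \emph{detect} the cohomology $\HHH^\bu(\fg,k)$ — i.e.\ the surjectivity/nilpotence-of-the-kernel statement — uniformly for an arbitrary restricted Lie algebra $\fg$, without the reductive hypothesis $p>h$ that makes $\HHH^\bu(\fg,k) \simeq k[\cN]^{(1)}$ outright. The unipotent case and the reductive case are each tractable by different means (the former via the structure of $\fu(\fg)$ as an iterated extension, the latter via the cited computation of Friedlander--Parshall), but splicing them together for a general $\fg$, and controlling the Frobenius twist bookkeeping so that the comparison map is genuinely a homeomorphism, is the technical heart of the argument; I would expect to invoke the machinery of \cite{SFB2} directly here rather than reprove it.
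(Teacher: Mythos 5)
Your proposal follows essentially the same route as the paper, which states this result as the height-one specialization of the Suslin--Friedlander--Bendel theory (Theorem~\ref{thm:inf}), using the identification $V_1(G)(k)\simeq \cN_p(\Lie G)$ and the isogeny $\psi:\HHH^\bu(\fg,k)\to k[\cN_p]$ built from the universal one-parameter subgroup; like the paper, you correctly isolate the nilpotence-of-the-kernel/detection statement as the technical heart and defer it to \cite{SFB2}. The sketch is accurate, so there is nothing substantive to add.
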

In fact, the map in the last theorem is more than a homeomorphism: the inverse is induced by a map of algebras $\psi: \HHH^\bu(\fg,k) \to k[\cN_p]$ which is an isogeny (or an $F$-isomorphism as defined by Quillen). Namely, it has a nilpotent kernel and the image contains the $p$-th power of any element. 

The theorem identifying the spectrum of cohomology of an arbitrary restricted Lie algebra is a special case of the theory constructed by Suslin-Friedlander-Bendel \cite{SFB1}, \cite{SFB2} for infinitesimal group schemes that we now briefly recall. 

For any affine group scheme $G$ over $k$ we consider the functor 
\begin{equation}
\label{def:functor} 
 R \mapsto \Hom_{\rm grp. sch}( \bG_{a(r),R}, G_R) 
 \end{equation}
sending a commutative $k$-algebra to the set of homomorphisms of group schemes over $R$.    A map of group schemes of the 
form $\bG_{a(r),R} \to G_R$ is referred to as a {\it  one-parameter subgroup} of $G_R$. 
\begin{theorem}[\cite{SFB1}]\label{thm:repr}    The functor \eqref{def:functor} is representable by an affine scheme $V_r(G)$ with the coordinate algebra $k[V_r(G)]$.  Equivalently, there is a natural isomorphism 
\[\Hom_{\rm grp. sch}( \bG_{a(r),R}, G_R)  \simeq \Hom_{k-\rm alg}(k[V_r(G)], R).\]
\end{theorem}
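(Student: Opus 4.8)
The plan is to construct the representing scheme $V_r(G)$ explicitly from the coordinate Hopf algebra $k[G]$ and then verify the universal property directly. The starting observation is that a homomorphism of group schemes $\bG_{a(r),R} \to G_R$ over a commutative $k$-algebra $R$ is, by the anti-equivalence between affine group schemes and commutative Hopf algebras (applied over $R$), the same data as a homomorphism of $R$-Hopf-algebras $k[G]\otimes_k R \to k[\bG_{a(r)}]\otimes_k R = R[T]/T^{p^r}$. So the functor in \eqref{def:functor} is naturally isomorphic to
\[
R \mapsto \Hom_{R\text{-Hopf alg}}\bigl(k[G]\otimes_k R,\; R[T]/T^{p^r}\bigr).
\]
First I would reduce this further. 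Giving such a Hopf algebra map is the same as giving an $R$-algebra map $k[G]\otimes_k R \to R[T]/T^{p^r}$ that is also compatible with comultiplication and counit; equivalently, by adjunction it is the same as a $k$-algebra map $k[G] \to R[T]/T^{p^r}$ satisfying the Hopf-compatibility conditions. Since $k[G]$ is a finitely generated $k$-algebra, an algebra map $k[G]\to R[T]/T^{p^r}$ is determined by the images of finitely many generators, each of which is a polynomial of degree $<p^r$ in $T$ with coefficients in $R$; thus the set of all $k$-algebra maps $k[G]\to R[T]/T^{p^r}$ is naturally the $R$-points of an affine scheme of finite type, and the relations defining $k[G]$ together with the Hopf-compatibility conditions (comultiplication sends $T \mapsto T\otimes 1 + 1\otimes T$, counit sends $T\mapsto 0$) cut out a closed subfunctor, hence a closed subscheme. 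This subscheme is $V_r(G)$, and its coordinate algebra $k[V_r(G)]$ is the quotient of a polynomial algebra over $k$ (in the "coefficient variables") by the ideal generated by all these relations.

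The key technical steps, in order, are: (1) set up the bijection between group scheme homomorphisms $\bG_{a(r),R}\to G_R$ and $R$-Hopf-algebra maps $k[G]_R \to R[T]/T^{p^r}$, and check naturality in $R$; (2) choose generators $f_1,\dots,f_m$ of $k[G]$ as a $k$-algebra and encode an algebra map by the tuple of coefficient vectors in $R^{m p^r}$, obtaining a representable subfunctor of affine space; (3) translate the defining relations of $k[G]$, plus the comultiplication and counit compatibility, into polynomial equations in these coefficient variables with coefficients in $k$ — crucially these equations do not involve $R$, so they define a genuine closed $k$-subscheme; (4) conclude that the resulting affine $k$-scheme $V_r(G)$ represents the functor, i.e. $\Hom_{\rm grp.sch}(\bG_{a(r),R},G_R)\simeq \Hom_{k\text{-alg}}(k[V_r(G)],R)$ naturally.

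The main obstacle I expect is step (3): verifying that the Hopf-algebra compatibility conditions — especially the multiplicativity of the map interacting with the comultiplication on $k[G]$, which for non-abelian $G$ is genuinely nonlinear — really do assemble into \emph{polynomial} equations in the coefficient variables, and that nothing forces a choice depending on $R$. The point is that $\bG_{a(r)}$ has the simplest possible coalgebra structure (it is primitively generated on $T$), so the condition that a $k$-algebra map $k[G]\to R[T]/T^{p^r}$ respects comultiplication amounts to a divided-power / binomial coproduct identity on the coefficient functions; spelling this out and checking it is closed is where the real work lies, and it is essentially the content of the Suslin--Friedlander--Bendel computation. One should also double-check finiteness is not needed for representability here (it is not — one only needs $k[G]$ finitely generated, which holds for all affine algebraic group schemes), and that base change along $k\to R$ is handled correctly so that naturality is automatic once the defining equations are over $k$.
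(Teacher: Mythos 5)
Your proposal is correct, and the paper itself gives no proof of this theorem (it is quoted from [SFB1]); your argument reconstructs essentially the standard one from that source: reduce to Hopf-algebra maps $k[G]\otimes_k R \to R[T]/T^{p^r}$, use finite generation of $k[G]$ and finite-dimensionality of $k[\bG_{a(r)}]$ to realize the algebra-map functor inside an affine space of coefficient variables, and observe that the relations of $k[G]$ and the comultiplication/counit compatibilities are polynomial conditions over $k$ (antipode compatibility being automatic for group scheme homomorphisms). The only point worth flagging is that if one presents $k[G]$ with a localization (e.g.\ $1/\det$ for subgroups of $\GL_n$) the inverse should be adjoined as an extra generator with its defining relation, so that everything remains a closed condition.
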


\begin{example}  $[1].$ Let $r=1$. Then the $k$-points of the scheme $V_1(G)$ coincide with the restricted nullcone of $\Lie G$: $V_1(G)(k) \simeq \cN_p(\Lie G)$. \\[1pt]
$[2].$ For $\GL_n$, $V_r(\GL_n)$ is the scheme of $r$-tuples of $p$-nilpotent pairwise commuting matrices.   It was pointed out in \cite{SFB1} that this description can be extended to any reductive group of exponential type. In more recent work \cite{Mc}, \cite{Sob}, the identification of $V_r(G)$ with the scheme of $r$-tuples of  $[p]$-nilpotent commuting elements  of $\Lie G$ was further extended to any reductive group $G$ under the assumption that $p$ is at least the Coxeter number for $G$. 
\end{example}

For an infinitesimal  group scheme of height $r$, we drop the subscript $r$ in $V_r(G)$ and use the notation  $V(G)$.  The main results of \cite{SFB1}, \cite{SFB2} amount to the following theorem. 

\begin{theorem}
\label{thm:inf}
Let $G$ be an infinitesimal group scheme of height $r$. There exists a natural homomorphism of $k$-algebras
\[\psi:\HHH^\bu(G,k) \to k[V(G)]\] with  nilpotent kernel  whose image contains the $p^r$-th power of any elements of $k[V(G)]$.   Consequently, the induced map  of schemes $V(G) \to \Spec \HHH^\bu(G,k)$ is a $p$-isogeny and the corresponding map of varieties of closed points is a homeomorphism:
\begin{equation} 
\label{Psi}
\Psi: V(G)(k) \simeq \Specm \HHH^\bu(G,k), 
\end{equation} 
where $\Specm \HHH^\bu(G,k)$ is the maximal ideal spectrum of $\HHH^\bu(G,k)$.
\end{theorem}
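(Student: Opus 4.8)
The plan is to construct $\psi$ from the \emph{universal one-parameter subgroup}, then establish its two asserted properties separately — the statements about $p$-isogeny and about the homeomorphism $\Psi$ being formal consequences. For the construction, note that $G$ has height $\le r$, so $V(G)=V_r(G)$, and by Theorem~\ref{thm:repr} the identity map of $A:=k[V(G)]$ corresponds to a universal homomorphism of group schemes $u_G\colon \bG_{a(r),A}\to G_A$. Pulling back along $u_G$ — using flat base change along $k\to A$ together with the fact that the cohomology of $\bG_{a(r)}$ is unaffected by extension of scalars — yields a homomorphism of graded $k$-algebras
\[
\HHH^\bu(G,k)\ \xrightarrow{\ u_G^*\ }\ \HHH^\bu(\bG_{a(r),A},A)\ \cong\ \HHH^\bu(\bG_{a(r)},k)\otimes_k A .
\]
For $p$ odd one has $\HHH^\bu(\bG_{a(r)},k)=k[x_1,\dots,x_r]$ with $\deg x_i=2$, the generator $x_i$ being ``dual'' to $T^{p^{i-1}}$ (for $p=2$ one uses $\HHH^*$); since $G$ has height \emph{exactly} $r$, the distinguished choice is to project onto the top variable: for homogeneous $z\in\HHH^{2m}(G,k)$, define $\psi(z)$ to be the coefficient of $x_r^{\,m}$ in $u_G^*(z)$. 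One checks this projection is multiplicative (so $\psi$ is a ring map) and natural in $G$; this naturality organises everything that follows.

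\emph{The image contains $p^r$-th powers, and the consequences.} First reduce to $\GL_{n(r)}$: the finite-dimensional Hopf algebra $kG$ acting on itself realises $G$ as a closed subgroup scheme of some $\GL_n$, and height $\le r$ forces $G\hookrightarrow\GL_{n(r)}$; this induces a closed immersion $V(G)\hookrightarrow V(\GL_{n(r)})$, hence a surjection $k[V(\GL_{n(r)})]\twoheadrightarrow k[V(G)]$, compatible with the restriction map $\HHH^\bu(\GL_{n(r)},k)\to\HHH^\bu(G,k)$ and with $\psi$ by naturality. It therefore suffices to produce, for a finite generating set of $k[V(\GL_{n(r)})]$ — concretely, the matrix entries of the commuting $[p]$-nilpotent $r$-tuples — elements of $\HHH^\bu(\GL_{n(r)},k)$ whose $\psi$-images are the $p^r$-th powers of those generators up to units (this suffices, $\mathrm{im}\,\psi$ being a $k$-subalgebra and $p$-th powering additive). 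These are supplied by the Friedlander--Suslin universal extension classes in $\HHH^{2p^{r-1}}(\GL_{n(r)},\gl_n^{(r)})$ — the same classes that power Theorem~\ref{thm:fg} — twisted by matrix coefficients: restricting them along the universal one-parameter subgroup and reading off the $x_r$-leading coefficient produces exactly the required $p^r$-th powers, after an explicit (if somewhat involved) computation with the bar resolution. Granting this together with nilpotence of $\ker\psi$ (below), the rest is formal. Writing $A_0=\HHH^\bu(G,k)/\ker\psi\subseteq k[V(G)]=:B$, the containment $B^{p^r}\subseteq A_0$ makes $B$ integral over $A_0$, so $\Spec B\to\Spec A_0$ is finite and surjective (lying over); it is injective on points because any $\mathfrak p\in\Spec B$ is recovered from $\mathfrak p\cap A_0$ as $\{b\in B:b^{p^r}\in\mathfrak p\cap A_0\}$ (using $b^{p^r}\in A_0$ and primeness), and a finite continuous bijection is a homeomorphism. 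Nilpotence of $\ker\psi$ identifies $\Spec A_0$ with $\Spec\HHH^\bu(G,k)$ topologically, and restricting to closed points and invoking the Nullstellensatz — legitimate because $\HHH^\bu(G,k)$ is finitely generated by Theorem~\ref{thm:fg} — yields $\Psi$ as in \eqref{Psi}.

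\emph{The main obstacle: $\ker\psi$ is nilpotent.} This is the heart of the matter — it is the assertion that cohomology of $G$ is detected, modulo nilpotents, by restriction along one-parameter subgroups $\bG_{a(r),K}\to G_K$ over all field extensions $K/k$ — and I do not expect a shortcut past the argument of \cite{SFB2}. The strategy I would follow is induction on the height $r$. For $r=1$ the statement is Theorem~\ref{thm:lie}: a non-nilpotent class in $\HHH^\bu(\fg,k)$ restricts non-trivially to some one-dimensional $[p]$-nilpotent subalgebra; this in turn is proved by reducing a general restricted $\fg$ to a subalgebra of $\gl_n$ and using the computation $\HHH^\bu(\fg,k)\simeq k[\cN]^{(1)}$ for reductive $\fg$ with $p$ large (\cite{FPar},\cite{AJ}), together with the $\fu(\fg)$-module structure. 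For the inductive step one uses the short exact sequence $1\to G_{(1)}\to G\to G/G_{(1)}\to 1$ with $G/G_{(1)}$ of height $r-1$, the Lyndon--Hochschild--Serre spectral sequence to squeeze $\HHH^\bu(G,k)$ between the cohomologies of the two smaller group schemes, and a matching of one-parameter subgroups of $G$ with compatible pairs (one-parameter subgroup of the quotient, compatible lift into $G_{(1)}$). The genuinely delicate point — and the step I expect to cost the most — is controlling the interaction of this spectral sequence (and the May-type filtration on $\HHH^\bu(\bG_{a(r)},k)$) with the universal classes finely enough that a non-nilpotent class cannot die on every one-parameter subgroup; this bookkeeping is exactly what occupies \cite{SFB2}.
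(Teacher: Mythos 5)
The paper itself contains no proof of this theorem: it is quoted as the main result of \cite{SFB1}, \cite{SFB2}, so your attempt can only be measured against the argument in those papers. Your reconstruction of the \cite{SFB1} half is essentially faithful: $\psi$ is indeed defined by restricting along the universal one-parameter subgroup $u_G\colon \bG_{a(r),A}\to G_A$ and projecting onto the coefficient of the top polynomial generator $x_r$ of $\HHH^*(\bG_{a(r)},k)$ (note only that for $p$ odd the even cohomology of $\bG_{a(r)}$ is $k[x_1,\dots,x_r]$ tensored with the even part of an exterior algebra, not just the polynomial part --- harmless, since the exterior generators are killed by your projection); the reduction to $\GL_{n(r)}$ and the use of the Friedlander--Suslin universal classes, paired with matrix coefficients, to hit $p^r$-th powers of the generators of $k[V_r(\GL_{n(r)})]$ is exactly the argument of \cite{SFB1}; and the purely-inseparable-descent deduction of the homeomorphism from the ``$F$-isomorphism'' property is standard and correct as written.

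The genuine gap is where you say it is --- nilpotence of $\ker\psi$ --- but the route you sketch toward it would not close. (i) For the base case $r=1$ you propose embedding $\fg\subset\gl_n$ and invoking $\HHH^\bu(\gl_n,k)\simeq k[\cN]^{(1)}$; but detection of non-nilpotence does not pass from $\gl_n$ to a subalgebra, since restriction $\HHH^\bu(\gl_n,k)\to\HHH^\bu(\fg,k)$ points the wrong way and is not injective modulo nilpotents for arbitrary subalgebras. (The embedding trick works for the $p^r$-th-power statement precisely because there the induced map $k[V(\GL_{n(r)})]\to k[V(G)]$ is surjective; no analogous luck for detection.) (ii) For the inductive step, the LHS spectral sequence of $1\to G_{(1)}\to G\to G/G_{(1)}\to 1$ does not by itself propagate a detection theorem: a non-nilpotent class on $G$ may restrict nilpotently to $G_{(1)}$ and be carried on $E_2$ by a class with coefficients in $\HHH^*(G_{(1)},k)$, so an inductive hypothesis stated only for trivial coefficients is too weak to conclude anything. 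This is exactly why \cite{SFB2} proves the detection theorem for cohomology with coefficients in an arbitrary unital associative $G$-algebra and runs its induction on the dimension of $k[G]$ rather than on the height with trivial coefficients. In short: the architecture of your write-up matches the literature, but the central step remains a black box, and the specific induction you propose for it would have to be replaced by the coefficient-general statement before it could succeed.
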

\noindent Theorem~\ref{thm:lie}  is a special case of Theorem~\ref{thm:inf} for $r=1$ thanks to the correspondence between restricted Lie algebras and infinitesimal group schemes of height $1$. 

\section{$\pi$-points} 
\label{sec:pi} 
In the previous section we described the spectrum of cohomology in terms of a ``model space" which was built from maps from some ``easy" objects to the group scheme: either elementary abelian $p$-subgroups for finite groups or one-parameter subgroups for infinitesimal groups.  We now explain a unified approach that works for any finite group scheme.  

We say that a map of algebras $\phi: A \to B$ is left (respectively, right) flat if  $\phi$ makes $B$ into a flat left (respectively, right) $A$-module. 
If $B$ is a  finite dimensional Frobenius algebra then any map of the form $\phi: k[x]/x^p \to B$ is left flat if and only if it is right flat (\cite{Sob}). Hence,  we do not distinguish between left and right flat in the definition below.

A finite group scheme $U$  over a field $K$ is abelian unipotent if the group algebra $KU$ is a local commutative $K$-algebra. Recall that for a field extension $K/k$, we denote by $KG=kG \otimes_k K$ the scalar extension from  $k$  to $K$. The finite group scheme over $K$ corresponding to the group algebra $KG$ is denoted $G_K$.    

\begin{definition}\label{defn:pi}
Let $G$ be a finite group scheme, and $K/k$ be a field extension.  A {\it $\pi$-point} $\alpha_K$ of $G$ is a flat map   of algebras $\alpha_K: K[x]/x^p \to KG$ such that there exists a  unipotent abelian subgroup scheme of $U \subset G_K$ and a commutative diagram
\[\xymatrix@=1.2mm{\alpha_K: K[x]/x^p \ar[rr]\ar[dr]&& KG\\
& KU \ar[ur] }\] 
where the map $KU \to KG_K$ is induced by the embedding $U \subset G_K$. 
\end{definition}
Note that in the definition a $\pi$-point, $\alpha_K$ is {\it only} a map of algebras whereas the map 
$KU \to KG$ is a map of Hopf algebras. We do not require that $U$ be defined over $k$. 

\begin{notation} For a $\pi$-point $\alpha_K:K[x]/x^p \to KG$, and a $G$-module $M$, we denote by $\alpha_K^*(M_K)$  the $K[x]/x^p$-module obtained by pulling back $M_K$  via $\alpha_K$.   
\end{notation} 

\begin{definition}{\cite{FP07}}\label{defn:equiv} Let $K, L$ be field extensions of $k$. Two $\pi$-points $\alpha_K: K[x]/x^p \to KG$, $\beta_L: L[x]/x^p \to LG$ are equivalent, written $\alpha_K \sim \beta_L$, 
if the following condition holds:  for any finite dimensional $kG$-module $M$, $\alpha_K^*(M_K)$ is free if and 
only if $\beta_L^*(M_L)$ is free (as $K[x]/x^p$ (respectively, $L[x]/x^p$)-module).
\end{definition}

\begin{example} 
\label{equiv:elem}
Let $E$ be an elementary abelian group of rank $r$ so that $kE \simeq k[x_1, \ldots, x_r]/(x_1^p, \ldots, x_r^p)$. Let $I = \Rad(kE)$ be the augmentation ideal. In this case the equivalence relation of Definition~\ref{defn:equiv} has a very explicit interpretation.  For simplicity, we restrict to $\pi$-points defined over the ground field $k$.  Let $\alpha, \beta: k[x]/x^p \to kE$ be two $\pi$-points (defined over $k$). Then $\alpha \sim \beta$ if and only if there exists a scalar $c \in k^*$ such that $(\alpha - c \beta)(t) \in I^2$ (see \cite[2.2, 2.9]{FP05}).
\end{example}

\begin{definition} For a finite group scheme $G$, we denote by $\Pi(G)$ the set of equivalence classes of $\pi$-points of  $G$: 
\[\Pi(G) = \frac{ \{ \pi-\text{points} \}}{\sim},\]
and by $[\alpha_K]$ the equivalence class of $\alpha_K$.  
For a $kG$-module $M$, we define the $\Pi$-support of $M$, 
\[\Pi(G)_M \subset \Pi(G),\] 
 as the subset of equivalence classes of $\pi$-points $\alpha_K$ such that $\alpha^*_K(M)$ is {\bf {not free}} as $K[x]/x^p$-module. 
\end{definition} 

Using $\Pi$-supports, we endow $\Pi(G)$ with Zariski topology in the following way:
\begin{proposition}[3.4 \cite{FP07}] Declaring 
\[ \{ \Pi(G)_M \subset \Pi(G) \, | \, M \text{ is finite dimensional } kG-\text{module}\}\]  
 to be the closed subsets gives $\Pi(G)$ a structure of a Noetherian topological space. 
\end{proposition}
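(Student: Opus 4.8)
The plan is to verify the three axioms of a topological space defined by prescribing a family of closed sets, together with the Noetherian property. I need to show that the collection $\mathcal{C} = \{\Pi(G)_M\}$ (with $M$ running over finite dimensional $kG$-modules) is closed under finite unions, closed under arbitrary intersections, and contains both $\emptyset$ and $\Pi(G)$; and finally that it satisfies the descending chain condition on closed sets. The key technical input throughout is that the property ``$\alpha_K^*(M_K)$ is free'' behaves well with respect to the standard module operations: direct sums, tensor products, and, crucially, that every $kG$-module is a submodule of a free $K[x]/x^p$-module after pullback iff a single cohomological/support condition holds. Since $K[x]/x^p$ is a local self-injective (Frobenius) algebra, a module over it is free iff it is projective iff it is injective, and freeness is detected on each generalized eigenspace — this rigidity is what makes the arguments go through.

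First I would establish the union axiom: $\Pi(G)_{M \oplus N} = \Pi(G)_M \cup \Pi(G)_N$. This is immediate because $\alpha_K^*((M\oplus N)_K) = \alpha_K^*(M_K) \oplus \alpha_K^*(N_K)$, and a direct sum of $K[x]/x^p$-modules is free iff each summand is free (using the Krull–Schmidt property over this local ring). Next, the whole space $\Pi(G) = \Pi(G)_{kG}$ is not quite right — rather I want a module whose pullback along every $\pi$-point is nonfree; but any $\pi$-point $\alpha_K$ is by definition flat, hence faithfully flat since $K[x]/x^p$ is local, so $\alpha_K^*$ of a nonprojective module is nonprojective, and taking $M = k$ the trivial module gives $\alpha_K^*(k_K) = K[x]/x^p$ acting with $x$ nilpotent but not zero — wait, one must check $x$ acts as zero on the trivial module, so $\alpha_K^*(k_K) = K \oplus \cdots$ is nonfree; thus $\Pi(G)_k = \Pi(G)$. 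For $\emptyset$, take $M = kG$ itself, which is free over $kG$, hence (by flatness of $\alpha_K$) $\alpha_K^*((kG)_K)$ is free over $K[x]/x^p$ for every $\alpha_K$, so $\Pi(G)_{kG} = \emptyset$.

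The intersection axiom is where the real work lies, and I expect it to be the main obstacle. For an arbitrary family $\{M_i\}$ one cannot form $\bigoplus M_i$ within finite dimensional modules. The standard device (going back to Carlson's theory of rank varieties and support varieties) is to reduce to finite intersections via the Noetherian property: it suffices to show $\mathcal{C}$ is closed under finite intersections and that descending chains stabilize, since then any intersection equals a finite subintersection. For finite intersections $\Pi(G)_M \cap \Pi(G)_N$, the key point is that there is a finite dimensional module $L$ with $\Pi(G)_L = \Pi(G)_M \cap \Pi(G)_N$; one natural candidate is a tensor product $M \otimes N$ (using the Hopf structure on $kG$), for which $\alpha_K^*((M\otimes N)_K)$ is free whenever either $\alpha_K^*(M_K)$ or $\alpha_K^*(N_K)$ is free — this is a consequence of the fact that over $K[x]/x^p$, the tensor product of any module with a free module is free, combined with the compatibility of $\alpha_K$ with the comultiplication up to the equivalence relation (this last compatibility is exactly the subtle point and relies on results of \cite{FP07} identifying $\Pi$-support with cohomological support via Theorem~\ref{thm:main}, or on a direct argument with shifted subgroups in the unipotent abelian $U$). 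To finish, the Noetherian property follows from Theorem~\ref{thm:fg}: the $\Pi$-support $\Pi(G)_M$ corresponds, under the homeomorphism with $\Proj$ of the cohomological support variety, to $\Proj|G|_M$, and these are closed subsets of the Noetherian space $\Proj \HHH^\bu(G,k)$ (Noetherian because $\HHH^\bu(G,k)$ is a finitely generated commutative $k$-algebra); descending chains of such closed subsets stabilize. The one genuine subtlety I would flag is justifying that every $\Pi$-support arises this way and that the bijection with cohomological support is a homeomorphism — but since Theorem~\ref{thm:main} is available to us by assumption, this can be invoked, and the proposition is really a formal consequence of it together with the elementary module-theoretic facts above.
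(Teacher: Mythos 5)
Your proposal is correct and follows essentially the same route as the proof in \cite{FP07} that the survey cites: unions via direct sums, finite intersections via the tensor product theorem $\Pi(G)_{M\otimes N}=\Pi(G)_M\cap\Pi(G)_N$ (which, as you rightly flag, is the genuinely nontrivial input because $\pi$-points are not Hopf algebra maps), and the descending chain condition --- hence arbitrary intersections --- from the identification of the sets $\Pi(G)_M$ with the closed subsets $\Proj|G|_M$ of the Noetherian space $\Proj \HHH^\bu(G,k)$ supplied by finite generation of cohomology. The one adjustment I would make is that you should cite the underlying set-theoretic bijection $\Psi$ and the equality $\Psi(\Pi(G)_M)=\Proj|G|_M$ from \cite{FP05}, which are established prior to and independently of any topology on $\Pi(G)$, rather than Theorem~\ref{thm:main} itself, since the assertion that $\Psi$ is a \emph{homeomorphism} presupposes the topology this proposition is constructing.
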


The $\Pi$-supports of modules satisfy a number of nice properties which would be familiar to anyone who has studied support varieties.  One of them is the ``tensor product theorem". This property turns out to be surprisingly non-trivial due to the fact that $\pi$-points are 
not Hopf algebra maps and, hence, the restriction along a $\pi$-point does not commute  with the tensor product.  
\begin{theorem}{\cite[3.9]{FP05}, \cite[5.2]{FP07}} Let $G$ be a finite group scheme, and $M$, $N$  be $kG$-modules (not necessarily finite dimensional).  Then \[\Pi(G)_{M \otimes N} = \Pi(G)_M \cap \Pi(G)_N.\]
\end{theorem}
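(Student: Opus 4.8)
The plan is to reduce the tensor product theorem for $\pi$-points to the corresponding statement for the elementary subalgebra $k[x]/x^p$, using the equivalence of categories \eqref{equi} and the Hopf-algebra structure that underlies the definition of a $\pi$-point. The containment $\Pi(G)_{M\otimes N}\subseteq \Pi(G)_M\cap\Pi(G)_N$ is the easy half: if $\alpha_K^*(M_K)$ is free as a $K[x]/x^p$-module, then I claim $\alpha_K^*((M\otimes N)_K)$ is free. Indeed, $\alpha_K$ factors through $KU\hookrightarrow KG_K$ for some unipotent abelian $U\subset G_K$, and $KU$ is a Hopf algebra, so $M_K\otimes N_K$ restricted to $KU$ is $(M_K|_{KU})\otimes(N_K|_{KU})$; restricting further along the \emph{algebra} map $\alpha_K:K[x]/x^p\to KU$ and using that $K[x]/x^p$ acts on a tensor product of $KU$-modules through the coproduct of $KU$ — which on the image of $x$ need not be primitive, but $x$ acts as a $p$-nilpotent operator — one shows freeness of a tensor product where one factor is free. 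The cleanest way is: a $K[x]/x^p$-module is free iff the operator "$x$" has maximal Jordan type; if $M_K|_{KU}$ is free over $K[x]/x^p$ via $\alpha_K$, then over $KU$ the module $M_K$ need not be projective, but one can instead argue directly that $\alpha_K^*(M_K)$ free and $N$ arbitrary forces $\alpha_K^*(M_K\otimes N_K)$ free, since tensoring a free module over a local ring by anything is free — but the subtlety is precisely that $K[x]/x^p$ does not act on the tensor product diagonally. So the real mechanism must go through $KU$: projectivity of $M_K$ \emph{as a $KU$-module} is detected by $\pi$-points into $U$, and for the honest Hopf algebra $KU$ the tensor product theorem for modular representations holds by the standard argument (the diagonal $KU\to KU\otimes KU$ is flat, Frobenius kernels of $U$, etc.). The first step, then, is to translate freeness along $\alpha_K$ into a statement about $KU$-modules using the factorization in Definition~\ref{defn:pi}.

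The harder containment $\Pi(G)_M\cap\Pi(G)_N\subseteq\Pi(G)_{M\otimes N}$ is where the work lies, and this is the step I expect to be the main obstacle. Given a $\pi$-point $\alpha_K$ with both $\alpha_K^*(M_K)$ and $\alpha_K^*(N_K)$ non-free, I must produce \emph{some} $\pi$-point (possibly over a larger field, and only up to the equivalence relation $\sim$) that detects non-freeness of $M\otimes N$. The strategy is to reduce to the case where $G_K$ is replaced by the unipotent abelian subgroup $U$ through which $\alpha_K$ factors, i.e. to prove the statement for an infinitesimal (or more generally finite) \emph{abelian unipotent} group scheme. By the structure theory of finite abelian unipotent group schemes over an algebraically closed (after base change) field — they are built from copies of $\bG_{a(r)}$ and $\mu_{p^s}$-type pieces, and in the unipotent case from Witt-vector groups — one can further reduce, via the theory of one-parameter subgroups (Theorem~\ref{thm:inf}) applied to $U$, to comparing $\pi$-points with honest one-parameter subgroups $\bG_{a(1),L}\to U_L$. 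For these the restriction \emph{is} a Hopf algebra map, so the tensor product theorem becomes the classical statement for $k\bG_{a(1)}\cong k[t]/t^p$, where it follows from Dade's lemma / the cyclic shifted subgroup argument of Avrunin–Scott. The delicate point is that a general $\pi$-point into $U$ is only equivalent to one coming from a one-parameter subgroup, not equal to it, so I need the fact — established in the cited papers \cite{FP05},\cite{FP07} and which I may invoke — that $\Pi(U)$ is computed by one-parameter subgroups and that the equivalence relation is compatible with this identification.

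Assembling these, the proof runs: (1) for the $\supseteq$ direction start with $\alpha_K$ detecting both $M$ and $N$, pass to the factorization through $KU$, base-change to make $U$ "split"; (2) use that $M_K$ non-free along $\alpha_K$ implies $M_K|_{KU}$ has $\alpha_K$ in its $\Pi(U)$-support, hence (by the tensor theorem for $U$, known via reduction to $\bG_{a(1)}$ and Dade's lemma) $\alpha_K$ lies in $\Pi(U)_{M_K\otimes N_K}$; (3) finally observe $\Pi(U)_{(M\otimes N)_K}$ maps into $\Pi(G)_{M\otimes N}$ under the natural map $\Pi(U)\to\Pi(G)$ induced by $U\subset G_K$, so $[\alpha_K]\in\Pi(G)_{M\otimes N}$. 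The non-finite-dimensional case is handled by noting that $\pi$-point support is detected on finitely generated submodules, or directly because the relevant freeness statements over $K[x]/x^p$ pass to arbitrary modules. The genuine obstacle throughout is that a $\pi$-point is not a coalgebra map; the entire architecture above exists precisely to route around that failure by replacing $\alpha_K$ with the Hopf-algebra inclusion $KU\hookrightarrow KG_K$ wherever the coproduct is needed, and only using $\alpha_K$ itself at the final stage of restricting to $K[x]/x^p$, where one factor is already known to be free.
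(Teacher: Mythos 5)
The survey only states this theorem with citations, so I am comparing your proposal against the proofs in \cite{FP05} and \cite{FP07}. Your architecture is the right one and agrees with those proofs in outline: factor $\alpha_K$ through the Hopf-algebra inclusion $KU\hookrightarrow KG_K$ of an abelian unipotent subgroup scheme, where restriction does commute with $\otimes$, and then confront the fact that $\alpha_K\colon K[x]/x^p\to KU$ is not a coalgebra map. But the proposal stops exactly where the work begins. The pivotal technical ingredient is never stated: one must show that the two $p$-nilpotent operators on $M_K\otimes N_K$ given by $\nabla_{KU}(\alpha_K(x))$ (the actual action) and by the ``primitive'' expression $\alpha_K(x)\otimes 1+1\otimes\alpha_K(x)$ define simultaneously free or simultaneously non-free $K[x]/x^p$-module structures. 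This rests on the explicit presentation of $KU$ as a truncated polynomial algebra and on a perturbation lemma identifying which modifications of a flat map $K[x]/x^p\to KU\otimes KU$ preserve freeness of restriction (the discrepancy $\nabla(\alpha_K(x))-\alpha_K(x)\otimes 1-1\otimes\alpha_K(x)$ lies in $\Rad(KU)\otimes\Rad(KU)$, and one must show such terms are harmless). Without this lemma neither containment closes --- and indeed your argument for the containment you call ``the easy half'' visibly circles and never terminates.

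Second, your proposed escape route --- reduce to one-parameter subgroups, ``for these the restriction is a Hopf algebra map'' --- is false for height $r>1$: the $\pi$-point attached to $\mu\colon\bG_{a(r)}\to G$ is $\mu_*\circ\epsilon$ with $\epsilon(x)=u_{r-1}$, and $u_{r-1}$ is not primitive in $k\bG_{a(r)}$ when $r>1$, so the coproduct problem survives the reduction; this is precisely why the perturbation lemma above is unavoidable. (Also, abelian unipotent group schemes are built from kernels of Witt vector groups and constant abelian $p$-groups; $\mu_{p^s}$ is multiplicative, not unipotent.) Once one is reduced to the primitive situation, the correct classical input is not ``Dade's lemma for $k[t]/t^p$'' but the decomposition of the tensor product of Jordan blocks $[a]\otimes[b]$ under the operator $s\otimes 1+1\otimes t$: it is free if and only if $a=p$ or $b=p$, which delivers both containments at once (your ``easy half'' is the statement that a free module tensored with anything is free, valid here because $s$ and $t$ are primitive in $\fu(\fg_a^{\oplus 2})$). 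Finally, your second justification for the infinite-dimensional case is correct (every $K[x]/x^p$-module is a direct sum of cyclic modules, so all the freeness arguments are insensitive to dimension), but the first is not: freeness over $K[x]/x^p$ is not detected on finitely generated submodules.
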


We define a continuous map 
\[\Psi: \Pi(G) \to \Proj \HHH^\bu(G,k)\]
in the following natural way:  Let $\alpha_K: K[x]/x^p \to KG$ be a $\pi$-point.  It induces a non-trivial (that is, not landing in degree 0) map on cohomology $\alpha_K^*: \HHH^\bu(G, K) \to \HHH^\bu(K[x]/x^p, K)$.    The kernel of $\alpha^*_K$ is a homogeneous ideal of $\HHH^\bu(G, K)$. Then $\Psi$  sends the equivalence class of $[\alpha_K]$ to the homogeneous prime ideal $\Ker \alpha_K^* \cap \HHH^\bu(G,k)$ where $\HHH^\bu(G,k)$ is embedded into $\HHH^\bu(G,K) = \HHH^\bu(G,k) \otimes K$
 by sending $\xi \in \HHH^\bu(G,k)$ to $\xi \otimes 1 \in \HHH^\bu(G,K)$:
 \[ \Psi([\alpha_K]) = \Ker \alpha_K^* \cap \HHH^\bu(G,k). \]  
 
\begin{theorem}[3.6 \cite{FP07}] 
\label{thm:main} 
For any finite group scheme $G$, $\Psi: \Pi(G) \to \Proj \HHH^\bu(G,k)$ is a homeomorphism.    For any finite dimensional 
$kG$-module $M$, $\Psi$ restricts to a homeomorphism  
$\Pi(G)_M \simeq \Proj |G|_M$.   
\end{theorem}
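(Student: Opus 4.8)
The plan is to build the homeomorphism $\Psi\colon \Pi(G) \to \Proj \HHH^\bu(G,k)$ in stages, reducing everything to the known case of abelian unipotent group schemes (where the statement follows from Theorem~\ref{thm:inf}, since such a $G$ is infinitesimal of some height), and then bootstrapping along the unipotent abelian subgroup schemes $U \subset G_K$ that appear in the definition of a $\pi$-point. First I would check that $\Psi$ is well defined: if $\alpha_K \sim \beta_L$, then the freeness criterion of Definition~\ref{defn:equiv}, applied to the modules $k[G]/\cI$ realizing points of $\Spec \HHH^\bu(G,k)$ via Carlson-type $L_\zeta$ modules, forces $\Ker\alpha_K^\ast \cap \HHH^\bu(G,k) = \Ker\beta_L^\ast \cap \HHH^\bu(G,k)$. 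Continuity is then immediate once we know $\Psi^{-1}$ of a closed set is of the form $\Pi(G)_M$: concretely, for a homogeneous $\zeta \in \HHH^\bu(G,k)$, one shows $\Psi^{-1}(Z(\zeta)) = \Pi(G)_{L_\zeta}$, where $L_\zeta$ is the Carlson module, using that $\alpha_K^\ast(L_{\zeta,K})$ is non-free precisely when $\alpha_K^\ast(\zeta) = 0$ in $\HHH^\bu(K[x]/x^p,K)$.

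Next I would address surjectivity of $\Psi$. Given a homogeneous prime $\mathfrak{p} \subset \HHH^\bu(G,k)$, pass to an algebraically closed $K$ large enough that $\mathfrak{p}$ comes from a $K$-point of $\Proj\HHH^\bu(G_K,K)$. By Friedlander--Suslin finite generation (Theorem~\ref{thm:fg}) together with Quillen-type stratification results available for finite group schemes, that cohomology class is detected on some abelian unipotent subgroup scheme $U \subset G_K$: the restriction map $\HHH^\bu(G_K,K) \to \HHH^\bu(U,K)$ does not kill the prime, i.e. there is a point of $\Proj\HHH^\bu(U,K)$ mapping to it. For such a $U$ — being infinitesimal — Theorem~\ref{thm:inf} identifies $\Proj\HHH^\bu(U,K)$ with $\Proj V(U)(K)$, and a one-parameter subgroup $\bG_{a(r),K} \to U_K$ supported at the relevant point yields, by restriction along $k[x]/x^p \hookrightarrow K\bG_{a(r)}$ (the inclusion dual to $T \mapsto$ a suitable linear combination, giving a flat map since $K\bG_{a(r)}$ is Frobenius), a $\pi$-point $\alpha_K$ of $G$ with $\Psi([\alpha_K]) = \mathfrak{p}$.

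For injectivity, suppose $\Psi([\alpha_K]) = \Psi([\beta_L])$. Enlarging the field and using that every $\pi$-point factors through some abelian unipotent $U$, the heart of the matter is: two $\pi$-points into the \emph{same} such $U$ with the same kernel on cohomology are equivalent. Here one again invokes Theorem~\ref{thm:inf} for $U$, which upgrades the equality of cohomology kernels to the statement that the two associated one-parameter subgroups have the same image point in $V(U)$, hence — by the Avrunin--Scott-type identification of cohomological and rank-variety support built into Theorems~\ref{thm:lie} and \ref{thm:inf} — detect freeness of $U$-modules identically; one then checks this is insensitive to the particular flat factorization $k[x]/x^p \to KU$ chosen. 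The delicate point is comparing $\pi$-points that factor through \emph{different} subgroups $U, U'$: one shows both can be compared to a common refinement, or directly that the equivalence relation is intrinsic to the cohomological kernel, which is where the tensor-product-free nature of $\pi$-points bites hardest.

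Finally, the refinement to supports: by the argument above $\Psi^{-1}(\Proj|G|_M) = \Pi(G)_M$ amounts to the assertion, for each $\pi$-point $\alpha_K$, that $\alpha_K^\ast(M_K)$ is free over $K[x]/x^p$ iff $\Ker\alpha_K^\ast \cap \HHH^\bu(G,k) \notin \Proj|G|_M$, i.e. iff $\cI_M \not\subset \Ker\alpha_K^\ast$. This is checked by factoring through $U$ and using the SFB identification of support varieties with rank varieties on $U$, together with the fact that restriction of $M$ to $K[x]/x^p$ through $\alpha_K$ computes the fiber of the support over the corresponding one-parameter subgroup. The main obstacle I expect is injectivity — specifically, proving that the equivalence relation on $\pi$-points depends only on the cohomological kernel $\Ker\alpha_K^\ast \cap \HHH^\bu(G,k)$ and nothing else — because $\pi$-points are merely algebra maps, so one cannot transport module-freeness along them using Hopf-theoretic (tensor) arguments, and one must instead descend to the abelian unipotent subgroups where the full SFB machinery is available, carefully controlling the field extensions and the choice of factorizing subgroup throughout.
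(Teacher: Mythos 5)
First, a framing remark: the survey states Theorem~\ref{thm:main} as a citation from \cite{FP07} and contains no proof, so there is nothing in the text to compare your argument against line by line; what follows measures your proposal against the actual argument of Friedlander--Pevtsova. Your skeleton is essentially the right one --- Carlson modules $L_\zeta$ to show $\Psi$ is well defined and to identify $\Psi^{-1}$ of the closed set $\{\zeta=0\}$ with $\Pi(G)_{L_\zeta}$, detection on abelian unipotent subgroup schemes over field extensions for surjectivity, and the coincidence of the module-theoretic equivalence relation with the cohomological one as the crux. But two points need repair. It is false that an abelian unipotent finite group scheme is infinitesimal: an elementary abelian $p$-subgroup $E \subset G_K$ is abelian unipotent ($KE$ is local commutative) yet its coordinate algebra $K^{\times |E|}$ is not local. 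What rescues the reduction is the structure theorem that the group algebra of an abelian unipotent finite group scheme over an algebraically closed field is isomorphic \emph{as an algebra} to a truncated polynomial algebra $K[t_1,\dots,t_n]/(t_1^{p^{e_1}},\dots,t_n^{p^{e_n}})$; since $\pi$-points and freeness are algebra-level notions (cf.\ Example~\ref{ex:elem}), the base case is handled by a direct Carlson/Avrunin--Scott-style computation for such algebras, not by quoting Theorem~\ref{thm:inf} verbatim. Relatedly, the ``Quillen-type stratification for finite group schemes'' you invoke for surjectivity is not an off-the-shelf input: the detection of cohomology modulo nilpotents on quasi-elementary subgroup schemes over field extensions (Suslin; Bendel for the unipotent case, building on \cite{SFB1}, \cite{SFB2}, \cite{Bendel}) is itself one of the main theorems assembled in \cite{FP05}, and appealing to ``Quillen stratification for finite group schemes'' risks circularity, since Theorem~\ref{thm:main} \emph{is} that stratification.

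More seriously, the injectivity step is left as a placeholder. You correctly isolate the difficulty --- showing that the equivalence relation of Definition~\ref{defn:equiv} depends only on $\Ker \alpha_K^* \cap \HHH^\bu(G,k)$ --- but ``one shows both can be compared to a common refinement, or directly that the equivalence relation is intrinsic to the cohomological kernel'' restates the goal rather than proving it. This is precisely the new content of \cite{FP07} over \cite{FP05}: given $\Psi([\alpha_K]) = \Psi([\beta_L])$ and a finite-dimensional $M$ with $\alpha_K^*(M_K)$ not free, one must produce non-freeness of $\beta_L^*(M_L)$, and this requires a genuine specialization argument (controlling behaviour of freeness under base change of the field, comparing a $\pi$-point to one lying over a generic point of the relevant closed subvariety, and only then descending to the abelian unipotent case). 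It does not follow formally from the subgroup-scheme case plus ``a common refinement.'' Without that argument the proof is incomplete at its central step.
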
 
\begin{remark}  There exists a scheme structure on $\Pi(G)$ defined solely in terms of representation theory which makes the homeomorphism $\Psi$ an isomorphism of schemes (see \cite[\S 7]{FP07}). 
\end{remark} 

\begin{example}  Let $E\simeq {\Z/p}^{\times r}$ be an elementary abelian $p$-group, and choose generators $\{g_1, \ldots, g_r\}$.  Then $kE \simeq k[x_1, \ldots, x_r]/(x_1^p, \ldots, x_r^p)$ with $x_i = g_i-1$ for $1 \leq  i \leq r$ being the generators of the augmentation ideal $I$. For any $\underline\alpha = (\alpha_1, \ldots, \alpha_r) \in \bA^r$, a {\it cyclic shifted subgroup} of $kE$ is a cyclic subgroup generated by $X_{\underline\alpha} +1$ where $X_{\underline\alpha} = \alpha_1 x_1 + \ldots \alpha_rx_r$. 
The space of all cyclic shifted subgroups plus $\{ 0 \}$ is naturally identified with $I/I^2 \simeq S^*((I/I^2)^\#) \simeq \bA^r$ and denoted $V(E)$. Then for any $kE$-module $M$, we have Carlson's rank variety $V(E)_M$  defined as follows: 
\[V(E)_M := \{ \underline\alpha \in V(E) \, | \, M\downarrow_{\langle X_{\underline \alpha}+1 \rangle} \text{ is not free } \} \cup \{ 0 \} \] 
The variety $V(E)_M$ is conical; we can consider the associated ``projectivized" rank variety $\Proj V(E)_M$ as a closed subset of $\Proj V(E) \simeq \bP^{r}$.   Example~\ref{equiv:elem} implies that we have a homeomorphism
\[\Proj V(E) \simeq \Pi(E)\] 
which restricts to 
\[\Proj V(E)_M \simeq \Pi(E)_M.\]
Hence, in the case of an elementary abelian $p$-group Theorem~\ref{thm:main} reduces to the ``projectivized" version of the
Carlson's conjecture proved by Avrunin and Scott (\cite{AS}): there is a homeomorphism $V(E) \simeq \Spec \HHH^\bu(E,k)$ which restricts to a homeomorphism between rank and support varieties: $V(E)_M \simeq |E|_M$. 
\end{example}

\begin{example}
For $G$ an infinitesimal group scheme, Theorem~\ref{thm:main} specializes to a ``projectivized" version of Theorem~\ref{thm:inf}. We describe how to go from a one-parameter subgroup to a $\pi$-point and refer the reader to \cite{FP07} for further details.  

\begin{notation}
\label{not:epsilon} Recall that $k\bG_{a(r)} \simeq k[u_0, \ldots, u_{r-1}]/(u_0^p, \ldots, u_{r-1}^p)$ where $u_i$ is the linear dual to $T^{p^i}$. 
We fix a map of algebras 
\[\epsilon:k[x]/x^p \to k\bG_{a(r)}\] 
which sends $x$ to $u_{r-1}$. 
\end{notation} 
Let $G$ be an infinitesimal group scheme of height $r$ and let $\mu: \bG_{a(r)} \to G$  be a one-parameter subgroup defined over the ground field $k$. It induces a map on group algebras $\mu_*: k\bG_{a(r)} \to kG$. Precomposing 
$\mu_*$ with $\epsilon$ we get a $\pi$-point $ \mu_* \circ \epsilon: k[x]/x^p \to kG$. The association
\begin{equation}\label{eq:pi} 
\mu \mapsto   \mu_* \circ \epsilon 
\end{equation}
determines a homeomorphism between the varieties of $k$-points of $\Proj V_r(G)$ and $\Pi(G)$.  

\end{example}

\subsection*{Applications} 
Since $kG$ is a Frobenius algebra for any finite group scheme $G$, we can associate to $G$ the  triangulated category $\stmod kG$, or, equivalently, $D_{\rm Sing}(G)$, the category of singularities of $kG$ (see, for example, \cite{Kel}; this is also discussed in \cite{Car08}). Objects of $\stmod kG$ are finite dimensional $kG$-modules, and Hom-sets are quotients defined as follows: 
\[ \Hom_{\stmod kG} (M,N) = \frac{\Hom_G(M,N)}{\PHom_G(M,N)},\] where $\PHom_G(M,N)$ are $G$ maps from $M$ to $N$ that factor  through a projective module. 

Let $T$ be a triangulated category. A {\it thick subcategory}  $\cC$ of $T$ is a full triangulated subcategory closed under direct summands. 
If $T$ is a symmetric monoidal triangulated category, then a thick subcategory $\cC$ is tensor ideal if for any $C \in  \cC$ and $X \in T$, $X \otimes C  \in \cC$. 

Using the properties of $\Pi$-supports, particularly the ``tensor product theorem", one can classify thick tensor ideal subcategories of $\stmod kG$ in terms of subsets  of $\Pi(G)$.  The following result, conjectured by Hovey-Palmiery-Strickland \cite{HP}, is a generalization to all finite group schemes of a theorem of Benson-Carlson-Rickard \cite{BCR}.  

\begin{theorem}[6.3 \cite{FP07}]\label{thm:class}  There is one-to-one order preserving correspondence between lattices of tensor ideal subcategories of $\stmod kG$ and subsets of $\Pi(G)$ closed under specialization.
\end{theorem}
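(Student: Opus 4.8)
The plan is to follow the by-now standard strategy for classification-of-thick-tensor-ideals results, adapting the Benson--Carlson--Rickard argument (and its refinement by Friedlander--Pevtsova) to the setting of $\stmod kG$. First I would set up the two maps between the poset of tensor ideal thick subcategories of $\stmod kG$ and the poset of specialization-closed subsets of $\Pi(G)$. Given a tensor ideal thick subcategory $\cC$, send it to the union $\bigcup_{M \in \cC} \Pi(G)_M$; this is specialization-closed because each $\Pi(G)_M$ is closed (it is a closed subset by the topology of Proposition~3.4, hence a fortiori specialization-closed) and arbitrary unions of specialization-closed sets are specialization-closed. Conversely, given a specialization-closed subset $W \subset \Pi(G)$, send it to the full subcategory $\cC_W$ of modules $M$ with $\Pi(G)_M \subseteq W$; one checks using the standard properties of $\Pi$-support (it is subadditive on triangles, $\Pi(G)_{M\oplus N} = \Pi(G)_M \cup \Pi(G)_N$, $\Pi(G)_{\Omega M}=\Pi(G)_M$, and the tensor product theorem $\Pi(G)_{M\otimes N}=\Pi(G)_M\cap\Pi(G)_N$) that $\cC_W$ is a tensor ideal thick subcategory. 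Both assignments are manifestly order-preserving, and the composite $W \mapsto \cC_W \mapsto \bigcup_{M\in\cC_W}\Pi(G)_M$ is the identity once one knows that every specialization-closed $W$ is a union of closed sets $\Pi(G)_M$ — this is a ``realization'' statement: one must produce, for each point $[\alpha_K]\in\Pi(G)$, a finite-dimensional module whose $\Pi$-support is the closure $\overline{\{[\alpha_K]\}}$. By Theorem~\ref{thm:main} this amounts to realizing the closure of an arbitrary point of $\Proj\HHH^\bu(G,k)$ as $\Proj|G|_M$, which is done by taking a Carlson-style $L_\zeta$-module (kernel of a map $\Omega^{|\zeta|}k \to k$ representing a homogeneous $\zeta$) and intersecting finitely many such, exactly as for finite groups.

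The substantive content is the other composite, namely that $\cC = \cC_{W(\cC)}$ where $W(\cC) = \bigcup_{M\in\cC}\Pi(G)_M$. The inclusion $\cC \subseteq \cC_{W(\cC)}$ is trivial. For the reverse inclusion one must show: if $M$ is a finite-dimensional $kG$-module with $\Pi(G)_M \subseteq W(\cC)$, then $M \in \cC$. The argument runs as follows. Since $\Pi(G)$ is Noetherian, $\Pi(G)_M$ is covered by finitely many closure-of-point pieces, so (using the realization modules above and the fact that $\cC$ is thick and tensor-ideal) one reduces to showing that $M \in \cC$ whenever there is some $N \in \cC$ with $\Pi(G)_M \subseteq \Pi(G)_N$. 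Now invoke the idempotent-module / Rickard-functor technology: associated to the specialization-closed set $\Pi(G)_N$ there is a pair of idempotent modules $E = e_{\Pi(G)_N}$ and $F = f_{\Pi(G)_N}$ fitting into a triangle $E \to k \to F \to$, with $F \otimes N \simeq 0$ in $\stmod kG$ and with the key property that tensoring with $E$ kills exactly the modules supported off $\Pi(G)_N$. Since $\Pi(G)_M \subseteq \Pi(G)_N$ one gets $M \simeq E \otimes M$ in the big stable category; on the other hand $E$ is built as a homotopy colimit of modules in the thick tensor ideal generated by $N$, so $E\otimes M$ — and hence $M$, which is compact — lies in the thick tensor ideal generated by $N$, hence in $\cC$. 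This is the step that uses essential input beyond formal support-variety bookkeeping: it requires the machinery of infinitely-generated modules, Rickard idempotents, and a ``compact object pulled out of a homotopy colimit lands in the thick closure'' lemma, all of which are available for $\stmod kG$ because $kG$ is a finite-dimensional self-injective (Frobenius) Hopf algebra and $\StMod kG$ is a compactly generated tensor triangulated category with $\stmod kG$ as its compact objects.

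I would carry out the steps in this order: (1) record the formal properties of $\Pi(G)_{(-)}$ (triangles, sums, shifts, tensor product theorem from the cited results) and deduce that $\cC_W$ is always a tensor ideal thick subcategory and that both assignments are order-preserving; (2) realize closures of points by Carlson $L_\zeta$-type modules, using Theorem~\ref{thm:main} to transport between $\Pi(G)$ and $\Proj\HHH^\bu(G,k)$, thereby proving $W = W(\cC_W)$; (3) construct the Rickard idempotent modules $e_W, f_W$ for specialization-closed $W$ in $\StMod kG$ and establish their support properties; (4) assemble (2) and (3) to prove $\cC = \cC_{W(\cC)}$, completing the bijection. The main obstacle is step (3)–(4): making the idempotent-module argument go through requires knowing that the tensor triangulated structure on $\StMod kG$ behaves well enough — in particular that $\StMod kG$ is generated by its compact (= finite-dimensional) objects, that the tensor unit $k$ is compact, and that idempotent triangles exist for arbitrary specialization-closed subsets — which is precisely where the Hopf-algebra structure and finite generation of $\HHH^\bu(G,k)$ (Theorem~\ref{thm:fg}) are indispensable. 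Everything else is formal manipulation of supports; this is the place where a genuinely new theorem for finite group schemes, rather than just a transcription of the finite-group proof, is needed, and it is the heart of the cited result~\cite[6.3]{FP07}.
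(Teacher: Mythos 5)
Your proposal is correct and follows essentially the same route as the cited proof in \cite{FP07} (the survey itself only states the result with a citation): the Benson--Carlson--Rickard-style bijection via $\Pi$-supports, realization of closures of points by Carlson $L_\zeta$-modules transported through Theorem~\ref{thm:main}, and Rickard idempotents in the big stable module category. The only point of emphasis worth adding is the one the survey itself makes right after the theorem: the essential non-formal input is that $\Pi$-supports of \emph{infinite-dimensional} modules still satisfy the tensor product theorem and detect projectivity (whereas cohomological supports do not), which is exactly what makes $F\otimes M\simeq 0$, hence $M\simeq E\otimes M$, work in your step (4); that, rather than the existence of idempotent triangles per se, is the genuinely new theorem needed for general finite group schemes.
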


For  a small symmetric monoidal triangulated  category $T$,  P. Balmer defined the spectrum of $T$, $\Spec T$, a ringed space 
that ``classifies" the thick tensor ideal subcategories of $T$.  Using Balmer's spectrum and the scheme structure on $\Pi(G)$ (\cite[7.5]{FP07}), Theorem~\ref{thm:class} can be given the following slick - and stronger - reformulation.
\begin{theorem}[\cite{Ba05}]\label{thm:balmer}
Let $G$ be a finite group scheme. There is an isomorphism  of schemes
\[\Spec (\stmod kG) \simeq \Proj \HHH^\bu(G,k).\]
\end{theorem}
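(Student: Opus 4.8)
The plan is to reduce the statement to Balmer's general theory of spectra of tensor-triangulated categories together with the two structural inputs already available: the classification of thick tensor ideals (Theorem~\ref{thm:class}) and the scheme structure on $\Pi(G)$ that upgrades the homeomorphism $\Psi$ of Theorem~\ref{thm:main} to an isomorphism of schemes (the remark after Theorem~\ref{thm:main}, referencing \cite[\S 7]{FP07}). First I would recall Balmer's construction: for a small symmetric monoidal triangulated category $T$, the underlying topological space of $\Spec T$ is the set of prime thick tensor ideals, equipped with the Hochster-dual of the ``support'' topology, and the structure sheaf is built from endomorphism rings of the unit localized at primes. The key abstract fact I would invoke is Balmer's comparison theorem: whenever one has a ``support datum'' on $T$ valued in a spectral space $X$ satisfying his axioms (the supports are closed, exhaust $X$, behave well under triangles, tensor, and direct sums, and are ``enough'' to detect objects), there is a canonical continuous map $X \to \Spec T$, and this map is a homeomorphism precisely when the support datum classifies the thick tensor ideals via specialization-closed subsets.

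The second step is to verify that $\Pi(G)$, together with the assignment $M \mapsto \Pi(G)_M$, is exactly such a support datum on $T = \stmod kG$. The properties needed are: each $\Pi(G)_M$ is closed (the Proposition before Theorem~\ref{thm:main}); the supports of triangles satisfy the two-out-of-three containment; $\Pi(G)_{M\oplus N} = \Pi(G)_M \cup \Pi(G)_N$; $\Pi(G)_{M[1]} = \Pi(G)_M$ and $\Pi(G)_k = \Pi(G)$; and the tensor formula $\Pi(G)_{M\otimes N} = \Pi(G)_M \cap \Pi(G)_N$ (the tensor product theorem). Crucially, Theorem~\ref{thm:class} says precisely that this support datum induces the bijection between thick tensor ideals and specialization-closed subsets, so Balmer's criterion applies and yields a homeomorphism $\Pi(G) \xrightarrow{\sim} \Spec(\stmod kG)$ of underlying spaces. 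Composing with the homeomorphism $\Psi\colon \Pi(G) \xrightarrow{\sim} \Proj\HHH^\bu(G,k)$ of Theorem~\ref{thm:main} already gives the homeomorphism of spaces.

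The third and more delicate step is promoting this to an isomorphism of \emph{schemes}, i.e.\ matching the structure sheaves. Here I would use the scheme structure on $\Pi(G)$ from \cite[\S 7]{FP07}, under which $\Psi$ is an isomorphism of schemes onto $\Proj\HHH^\bu(G,k)$; so it suffices to identify the structure sheaf of $\Spec(\stmod kG)$ with that of $\Pi(G)$. By Balmer's description, the sections of the structure sheaf over the basic open complementary to $\Pi(G)_M$ are given by $\End_{\stmod kG}(\mathbf 1)$ localized appropriately, and more generally by graded endomorphisms of the unit in suitable localizations; for $\stmod kG$ these graded endomorphism rings of $k$ are exactly $\HHH^\bu(G,k)$ and its localizations, by the identification of the Tate cohomology / stable endomorphism ring of the trivial module with $\HHH^\bu(G,k)$ in positive degrees. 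Matching these local rings compatibly across the basis of open sets gives the isomorphism of ringed spaces, hence of schemes. The main obstacle is precisely this last bookkeeping: one must check that Balmer's intrinsically-defined structure sheaf, computed via central localizations of the graded ring $\End^\bu_{\stmod kG}(k) \cong \HHH^\bu(G,k)$, agrees on basic opens with the standard structure sheaf of $\Proj\HHH^\bu(G,k)$, and that this agreement is natural enough to glue — this is where the finite generation (Theorem~\ref{thm:fg}), which guarantees $\Proj\HHH^\bu(G,k)$ is a genuine Noetherian scheme and that the relevant localizations are well-behaved, is indispensable.
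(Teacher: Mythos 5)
Your proposal is correct and follows exactly the route the paper indicates: the paper gives no proof itself, citing \cite{Ba05}, but states that the theorem is obtained by combining Theorem~\ref{thm:class} (the classification of thick tensor ideals via $\Pi$-supports) with Balmer's abstract spectrum and the scheme structure on $\Pi(G)$ from \cite[\S 7]{FP07} --- precisely your three steps. The only caveat worth recording is that the comparison of Balmer's intrinsically defined structure sheaf with $\Proj$ of the graded endomorphism ring $\End^\bu_{\stmod kG}(k)\cong\HHH^\bu(G,k)$, which you correctly flag as the delicate bookkeeping step, is carried out in Balmer's work rather than in this survey.
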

An essential component of the theory of $\Pi$-supports is that it retains good properties even for infinite-dimensional modules - whereas the cohomological supports do not (in particular, the homeomorphism in Theorem~\ref{thm:main} breaks down for infinite dimensional modules).  This feature plays an important   role in the proofs of Theorems~\ref{thm:class} and \ref{thm:balmer}. For finite groups the recent paper \cite{CI} gives a different proof of the classification without a recourse to infinite dimensional modules whereas for finite groups schemes in general the theory of $\pi$-points  is so far the only tool available to prove Theorem~\ref{thm:class}.  

Another application of a good ``support variety theory"  for finite group schemes is a geometric criterion to determine the representation type.   Namely,  we have the following theorem:

\begin{theorem}[\cite{Far}, \cite{FaS}] Let $G$ be a finite group scheme. If $\dim \Spec \HHH^\bu(G,k) \geq 3$, then  
the representation theory of $G$ is wild.  
\end{theorem}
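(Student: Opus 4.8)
The plan is to reduce the statement to a result about restricted Lie algebras or, more precisely, to exploit the geometry of $\Pi(G)$ together with the structure theory of blocks. The starting observation is that if $\dim \Spec \HHH^\bu(G,k) \geq 3$, then by Theorem~\ref{thm:main} the space $\Pi(G)$ (and hence $\Proj \HHH^\bu(G,k)$) has dimension at least $2$. Since wildness of $kG$ is equivalent to wildness of at least one of its blocks, and the block decomposition of $kG$ induces a decomposition of $\Pi(G)$ into disjoint closed-and-open pieces, there must be a single block $B$ of $kG$ whose associated component of $\Proj \HHH^\bu(G,k)$ has dimension $\geq 2$. So it suffices to show: a block $B$ of a finite group scheme with $\dim \Pi(B) \geq 2$ is wild.

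First I would handle the infinitesimal case, which is the technical heart. By the general theory of finite group schemes, any finite group scheme $G$ fits into an extension of an infinitesimal group scheme by a finite (étale) group, and one can pass to $G^0$, the connected component, or more precisely to an appropriate Frobenius kernel $G_{(r)}$; the key input here is that supports and representation type can be controlled by restriction to $G_{(r)}$ for $r$ large, and that a module wild for $G_{(r)}$ forces $G$ to be wild as well. For $G$ infinitesimal one uses the theory of Suslin--Friedlander--Bendel: Theorem~\ref{thm:inf} gives $\Pi(G) \simeq \Proj V(G)(k)$, and $V(G)$ is the variety of one-parameter subgroups. The strategy of Farnsteiner is then to produce, for a suitable curve (or $2$-dimensional family) inside $\Proj V(G)$, a one-parameter family of pairwise non-isomorphic indecomposable modules of a fixed dimension by pulling back along a family of $\pi$-points — this already gives non-tameness when the support has dimension $\geq 1$ and the algebra is not of finite representation type, and the sharper dichotomy (wild versus tame) in the dimension $\geq 2$ case comes from showing that tame blocks are severely constrained: their stable Auslander--Reiten quiver has components of a very restricted shape (tubes, or $\Z A_\infty$), which forces the complexity, hence $\dim \Pi(B)$, to be $\leq 2$, and a closer analysis of the $\dim = 2$ borderline (using that tame blocks are ``domestic'' or special biserial in the relevant cases) rules it out. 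Concretely, one invokes the classification of tame infinitesimal group schemes / tame blocks (the Morita-type classification of blocks of tame representation type, e.g. via the work on the structure of tame blocks of $G_{(r)}$), from which $\dim \Spec \HHH^\bu \leq 2$ for every tame block follows; contrapositively, $\dim \geq 3$ implies wild.

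The reduction from a general finite group scheme to the infinitesimal-plus-étale case uses the theorem of Farnsteiner--Skowro\'nski that the representation type is detected ``locally'': wildness of $kG$ is inherited from wildness of $k(G^0_{(r)} \rtimes \pi_0(G))$ for suitable $r$, and the étale part only enlarges the module category, so it cannot turn a wild algebra tame. Combining: if $\dim \Spec \HHH^\bu(G,k) \geq 3$, pick the block $B$ realizing this dimension, restrict to the relevant infinitesimal piece, apply the tame-block classification to conclude $B$ cannot be tame, hence $B$ — and therefore $kG$ — is wild, since for self-injective algebras the trichotomy finite/tame/wild is exhaustive (Drozd), and finite representation type forces $\dim \Pi(B) \leq 1$ by Theorem~\ref{thm:main} together with the fact that finite type means complexity $\leq 1$.

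The main obstacle is the infinitesimal case at the borderline $\dim = 2$: ruling out tameness there is not formal and genuinely requires the hard classification results of Farnsteiner (and Farnsteiner--Skowro\'nski) describing all tame blocks of infinitesimal group schemes — equivalently, showing that any block with a two-dimensional projectivized support variety already has an AR-component structure incompatible with tameness, or is Morita-equivalent to one of the finitely many known tame algebras, all of which have $\leq 2$-dimensional (affine) cohomology spectrum. Everything else — the block decomposition, the reduction via Frobenius kernels, the use of Theorem~\ref{thm:main} to translate ``$\dim \Spec \HHH^\bu \geq 3$'' into a statement about $\Pi(G)_M$ and complexity — is comparatively routine.
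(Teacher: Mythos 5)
The paper itself offers no proof of this theorem: it is quoted from \cite{Far} and \cite{FaS} in the ``Applications'' part of Section~\ref{sec:pi}, so there is no internal argument to measure yours against. Judged on its own terms, your proposal has a genuine gap at its core. The step carrying all the weight is ``one invokes the classification of tame infinitesimal group schemes / tame blocks \dots from which $\dim \Spec \HHH^\bu \leq 2$ for every tame block follows.'' That is not a proof but a deferral to a much harder result, and it is logically backwards with respect to the literature: the classification of tame principal blocks in \cite{FaS} and its successors uses wildness criteria of exactly the present kind as input, so your route is circular. The actual argument of \cite{Far} is direct and classification-free. One assumes $kG$ is not wild and invokes Crawley-Boevey's theorem that over a tame algebra all but finitely many indecomposable modules of each fixed dimension are $\tau$-periodic; for the Frobenius algebra $kG$ one has $\tau \cong \Omega^2 \circ \nu$ with the Nakayama twist $\nu$ of finite order, so $\tau$-periodic indecomposables are $\Omega$-periodic and have complexity $1$. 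Against this, the hypothesis $\dim\Spec\HHH^\bu(G,k) \geq 3$ says the trivial module has complexity at least $3$, and Carlson's modules $L_\zeta$ for $\zeta \in \HHH^{2n}(G,k)$ (all of the same dimension for fixed $n$, with support the hypersurface $Z(\zeta)$ of dimension $\geq 2$) produce infinitely many pairwise non-isomorphic indecomposable summands of bounded dimension and complexity $\geq 2$. This contradiction is the engine of the theorem, and none of it appears in your sketch.

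Two smaller points. First, $\HHH^\bu(G,k) = \Ext^\bu_G(k,k)$ already lives in the principal block, so ``pick the block $B$ realizing this dimension'' is vacuous: the hypothesis is a statement about the complexity of $k$, and the argument is run with modules supported in $\Spec\HHH^\bu(G,k)$ (the $L_\zeta$), with no block-by-block decomposition of $\Pi(G)$ needed. Second, the reduction to Frobenius kernels and the ``infinitesimal-plus-\'etale'' analysis is unnecessary: the finite generation theorem (Theorem~\ref{thm:fg}), the support/$\pi$-point machinery (Theorem~\ref{thm:main}), and the Carlson-module construction are available for arbitrary finite group schemes, which is precisely why the theorem holds in this generality without any structure theory of $G$.
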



\section{Local Jordan type}
\label{sec:jordan}
\begin{definition} Let $M$ be a finite dimensional $kG$-module, and  
let $\alpha_K: K[x]/x^p \to KG$ be a $\pi$-point. The {\it Jordan type} of $M$ at the $\pi$-point $\alpha_K$  
is the Jordan canonical form of $\alpha_K(x)$  considered as an operator on $M_K$. 
\end{definition}
Note that $\alpha_K(x)$ is a $p$-nilpotent operator, that is, $\alpha_K(x)^p =0$. Hence, the eigenvalues are all zero, and the  only Jordan blocks possible are the ones of size from $1$ to $p$. We use the exponential notation 
\[ \JType(\alpha_K, M) = [p]^{a_p}\ldots [1]^{a_1}\]
for the Jordan type where $a_i$ is the number of blocks of size $i$  in the Jordan form of $\alpha_K(x)$. 
An equivalent way of thinking about $\JType(\alpha_K, M)$ is that it is the isomorphism type of the $K[x]/x^p$-module $\alpha^*(M_K)$. 
We often refer to $\JType(\alpha_K, M)$ as a {\it local Jordan type} of $M$.  

We start with a well-known example of  what the local Jordan type of a module, considered at all $\pi$-points simultaneously, can determine.  The following theorem can be viewed as a generalization of the famous ``Dade's lemma" for elementary abelian $p$-groups \cite{D}.

\begin{theorem} 
Let $G$ be a finite group scheme, and $M$ be a finite dimensional $kG$-module.  
Then $M$ is projective if and only if $\JType(\alpha, M) = [p]^{\frac{\dim M}{p}}$ 
for all $\pi$-points $\alpha:k[x]/x^p \to kG$ defined over  $k$. 
\end{theorem}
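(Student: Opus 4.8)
The plan is to establish the two implications separately, reducing both to the detection criterion for projectivity that is already available through the theory of $\pi$-points. The easy direction is: if $M$ is projective, then for \emph{any} $\pi$-point $\alpha_K$ the restriction $\alpha_K^*(M_K)$ is free over $K[x]/x^p$, since a flat map sends projective (= free, over a local ring) modules to flat hence free modules; in particular this holds for $\pi$-points defined over $k$, and freeness over $K[x]/x^p$ means precisely that the Jordan type is $[p]^{\dim M / p}$. The substance is the converse.

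For the converse, suppose $\JType(\alpha, M) = [p]^{\dim M/p}$ for every $\pi$-point $\alpha : k[x]/x^p \to kG$ \emph{defined over $k$}; I must deduce that $M$ is projective. The first step is to recall the cohomological criterion for projectivity: a finite-dimensional $kG$-module $M$ is projective if and only if its support variety $|G|_M$ is trivial (equivalently $\Proj|G|_M = \varnothing$), which follows from finite generation (Theorem~\ref{thm:fg}) together with the standard theory of support varieties. By Theorem~\ref{thm:main}, $\Proj|G|_M \simeq \Pi(G)_M$, so it suffices to show $\Pi(G)_M = \varnothing$, i.e.\ that $\alpha_K^*(M_K)$ is free for \emph{every} $\pi$-point $\alpha_K$ over every field extension $K/k$. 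The hypothesis only gives this for $\pi$-points defined over the base field $k$, so the crux is a descent/specialization argument promoting the $k$-rational hypothesis to all field extensions.

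The key step, and the one I expect to be the main obstacle, is this passage from $k$-rational $\pi$-points to arbitrary ones. I would argue as follows: since $k$ is algebraically closed, $k$-rational $\pi$-points are Zariski dense in $\Pi(G)$ (indeed, by the description via one-parameter subgroups and unipotent abelian subgroup schemes, or directly from the fact that $\Pi(G)$ is the space of closed points of a $k$-scheme of finite type and $k = \bar k$). Now $\Pi(G)_M$ is a \emph{closed} subset of $\Pi(G)$; if it were nonempty it would be a nonempty closed subset of a variety over an algebraically closed field, hence would contain a closed (i.e.\ $k$-rational) point $[\alpha]$. But $[\alpha] \in \Pi(G)_M$ for a $k$-rational $\alpha$ means $\alpha^*(M)$ is not free, contradicting the hypothesis. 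Therefore $\Pi(G)_M = \varnothing$, and $M$ is projective. The only delicate point is verifying that a nonempty closed subset of $\Pi(G)$ must contain a $k$-rational point; this uses that $\Pi(G)$ carries a scheme structure of finite type over $k$ (the remark after Theorem~\ref{thm:main}) and that $k$ is algebraically closed, so that closed points are dense in every closed subscheme and are exactly the $k$-rational points. An alternative, more self-contained route avoiding the scheme structure is to invoke Dade's lemma directly in the elementary abelian case and then reduce the general case to elementary abelian subgroup schemes of $G_K$ via the definition of $\pi$-points — but the density argument above is cleaner given the machinery already assembled in the excerpt.
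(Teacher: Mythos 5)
The paper states this theorem without proof --- it is quoted as a known result (the generalization of Dade's lemma, due to Bendel for infinitesimal group schemes and to Friedlander--Pevtsova in general) --- so there is no in-paper argument to compare against; I can only assess your proposal on its own terms. Your outline is correct and is essentially the standard modern derivation from the machinery assembled in Sections 1--3: the forward direction via flatness of $\alpha_K$ is fine (restriction of a projective $KG$-module along a flat algebra map $K[x]/x^p \to KG$ is flat, hence free over the local ring $K[x]/x^p$), and the converse correctly chains together ``$M$ projective iff $|G|_M$ is trivial'' (a consequence of Theorem~\ref{thm:fg} via the complexity argument), Theorem~\ref{thm:main}, and a Nullstellensatz-type argument to reduce to $k$-rational points.

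Two caveats you should make explicit. First, the crux you identify --- that a nonempty closed subset of $\Pi(G)$ contains a closed point which is represented by a $\pi$-point \emph{defined over $k$} --- is not a purely formal density statement: it requires the surjectivity of the assignment $\{\pi\text{-points over }k\} \to \{\text{closed points of }\Proj \HHH^\bu(G,k)\}$, which rests on the Quillen and Suslin--Friedlander--Bendel descriptions of the cohomology spectrum and is buried in the proof of Theorem~\ref{thm:main} rather than in its statement; as written, ``closed point $=$ $k$-rational point $=$ class of a $k$-defined $\pi$-point'' conflates two different identifications, only the first of which is the Nullstellensatz. Second, be aware that in the original development the detection theorem is an \emph{input} to the proof of Theorem~\ref{thm:main} (the homeomorphism $\Pi(G)_M \simeq \Proj|G|_M$ already contains the statement ``$\Pi(G)_M=\varnothing$ iff $M$ is projective'' as the special case of empty support), so your derivation, while perfectly valid when Theorem~\ref{thm:main} is taken as a black box, inverts the logical order; a self-contained proof would instead follow your alternative route of reducing to (quasi-)elementary subgroup schemes and invoking Dade's lemma and the rank-variety theory there.
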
 

We refer the reader to D. Benson's article in the same proceedings for many examples and properties of local Jordan types of modules. We point out (as Benson does too) that in general the Jordan type is  NOT independent of a representative of an equivalence class of $\pi$-points. It is independent, though, if the  $\pi$-point is generic.   We say that $\alpha_K: K[x]/x^p \to KG$ is a {\it generic} $\pi$-point if the image of $\alpha_K$ under the map $\Psi$ of Theorem~\ref{thm:main}  is a generic point of $\Proj \HHH^\bu(G,k)$. The independence of $\JType(\alpha_K, M)$ of a representative of an equivalence class of $[\alpha_K]$ in this case is the most difficult part of the following theorem.

\begin{theorem}[\cite{FPS}] \label{thm:functor}
Let $G$ be a finite group scheme, and $\alpha_K: K[x]/x^p \to KG$ be a generic $\pi$-point of $G$. 
\begin{enumerate} \item  The functor 
\[\alpha^*_K: \stmod G \to \stmod K[x]/x^p \] defined by sending a $kG$-module $M$ to $\alpha^*_K(M)$ is an exact functor which commutes with tensor products.  
\item For two different choices of representatives $\alpha_K$, $\beta_L$ of the equivalence class $[\alpha_K]$, the corresponding functors are naturally isomorphic. 
\end{enumerate} \end{theorem}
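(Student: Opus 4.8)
The plan is to prove both statements by reducing to the structure of $\Pi(G)$ near a generic point and exploiting the good functorial properties of restriction along $\pi$-points when the parameter is ``as free as possible''. First I would set up the base change: a generic $\pi$-point $\alpha_K: K[x]/x^p \to KG$ corresponds, under $\Psi$ of Theorem~\ref{thm:main}, to a generic point $\eta$ of some irreducible component of $\Proj \HHH^\bu(G,k)$, so $K$ contains (a finite extension of) the function field of that component. The key observation is that for a finite-dimensional $kG$-module $M$, the set of points of $\Proj |G|_M$ is closed, hence either contains the generic point $\eta$ of the component or is disjoint from an open neighborhood of it. Via Theorem~\ref{thm:main} this translates into the statement that $\alpha_K^*(M_K)$ is free over $K[x]/x^p$ precisely when $[\alpha_K] \notin \Pi(G)_M$. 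This dichotomy is what makes $\alpha_K^*$ well-behaved stably: a module $M$ goes to a free (hence stably-zero) $K[x]/x^p$-module if and only if $M \notin$ the support, and this is detected purely cohomologically at the generic point.

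For part (1), exactness: a short exact sequence $0 \to M' \to M \to M'' \to 0$ of $kG$-modules base-changes to a short exact sequence over $KG$, and restriction along the algebra map $\alpha_K$ gives a short exact sequence of $K[x]/x^p$-modules $0 \to \alpha_K^*(M'_K) \to \alpha_K^*(M_K) \to \alpha_K^*(M''_K) \to 0$. In $\stmod K[x]/x^p$ this becomes a triangle, so $\alpha_K^*$ sends triangles to triangles; exactness of the stable functor then follows once one checks it is additive and sends projectives to projectives, which holds because $\alpha_K$ is flat so $\alpha_K^*(kG) = KG$ restricted is free over $K[x]/x^p$. Compatibility with tensor products is the subtler point: $\pi$-points are \emph{not} Hopf algebra maps, so $\alpha_K^*(M \otimes N)$ is not literally $\alpha_K^*(M) \otimes \alpha_K^*(N)$. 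One instead shows that the two $K[x]/x^p$-modules $\alpha_K^*((M \otimes N)_K)$ and $\alpha_K^*(M_K) \otimes_K \alpha_K^*(N_K)$ have the same Jordan type; by the tensor product theorem for $\Pi$-supports (Theorem following Example~\ref{equiv:elem}) they at least have the same free/non-free dichotomy, and at a generic point one pushes this to an equality of full Jordan types by a semicontinuity plus specialization argument — the generic Jordan type of $M \otimes N$ at the component agrees with the product of generic types because the non-generic locus where they could differ is a proper closed subset, contradicting genericity of $\eta$.

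For part (2), independence of the representative: given two $\pi$-points $\alpha_K$, $\beta_L$ with $[\alpha_K] = [\beta_L]$, one first passes to a common field extension $\Omega \supseteq K, L$ so that both become $\pi$-points over $\Omega$ lying over the same generic point $\eta$. The heart of the matter — and the step I expect to be the main obstacle — is showing that the \emph{Jordan type} $\JType(\alpha_\Omega, M)$ is independent of which generic representative we chose; equivalently that $\alpha_\Omega^*(M_\Omega) \cong \beta_\Omega^*(M_\Omega)$ as $\Omega[x]/x^p$-modules, not merely ``both free or both non-free''. The idea is to realize the Jordan type numbers $a_i = \dim \ker(\alpha(x)^i) - \dim\ker(\alpha(x)^{i-1})$ etc. as ranks of maps that vary algebraically over a parameter space of $\pi$-points, invoke upper semicontinuity of these ranks, and then use that at the \emph{generic} point of an irreducible parameter space a semicontinuous function attains its generic (minimal) value — so any two generic $\pi$-points in the same component give the same Jordan type. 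The delicate issue is that two equivalent generic $\pi$-points need not lie in the same irreducible family \emph{a priori}; one must use the structure theory from \cite{FP07} (the scheme structure on $\Pi(G)$ and the fact that generic $\pi$-points of a component form a single orbit up to the relevant equivalence) to put them in one family. Once the Jordan types agree, one upgrades the abstract isomorphism $\alpha_\Omega^*(M) \cong \beta_\Omega^*(M)$ to a \emph{natural} isomorphism of functors by a standard argument: exhibit a natural transformation (e.g. through a ``universal'' generic $\pi$-point or via the common factorization through a unipotent abelian subgroup scheme over $\Omega$) that is an isomorphism on each object by the Jordan-type computation, and invoke that a natural transformation of exact functors on $\stmod$ which is an object-wise isomorphism is a natural isomorphism. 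Finally descend from $\Omega$ back to $K$ and $L$ using faithful flatness of the field extensions, which preserves stable isomorphism classes and naturality.
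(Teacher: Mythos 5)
This survey does not reprove the theorem: it is quoted from \cite{FPS}, with only the remark that the independence of $\JType(\alpha_K,M)$ of the chosen representative is the most difficult point. Measured against the argument of \cite{FPS}, your part (1) on exactness is correct (base change to $K$ and restriction along a flat algebra map are exact, and flatness of $\alpha_K$ makes $KG$ free over $K[x]/x^p$, so projectives go to projectives), and your guiding idea for (2) --- realize the block counts as ranks of maps varying algebraically in a family, invoke semicontinuity, and use that the generic point of an irreducible family attains the generic value --- is indeed one of the tools used there.

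The gap is exactly where you flag it, and you do not close it. Knowing $[\alpha_K]=[\beta_L]$ in the sense of Definition~\ref{defn:equiv} only says the two $\pi$-points detect projectivity of the same modules; it produces no algebraic family containing both, and the assertion that ``generic $\pi$-points of a component form a single orbit up to the relevant equivalence'' cannot simply be quoted from \cite{FP07} --- a statement of that flavor is essentially what is being proved. The proof in \cite{FPS} goes around this: each $\pi$-point factors by definition through an abelian unipotent subgroup scheme $U\subset G_K$, whose group algebra (after field extension) is an explicit truncated polynomial algebra, and the crux is a perturbation lemma proved there by hand: if two $p$-nilpotent elements of such an algebra agree modulo $\Rad^2$ and their common linear part is generic, they have the same Jordan type on every module. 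The semicontinuity/degeneration argument lives inside the proof of that lemma (interpolating between $\ell$ and $\ell+q$), not in a direct comparison of two abstractly equivalent $\pi$-points. Your treatment of the tensor product has the same missing ingredient: since $\alpha_K$ is not a Hopf map, one must compare the action of $\Delta(\alpha_K(x))$ on $(M\otimes N)_K$ with that of $\alpha_K(x)\otimes 1+1\otimes\alpha_K(x)$, and these differ by an element of $\Rad\otimes\Rad$, so the very same perturbation lemma is needed; the tensor product theorem for $\Pi$-supports only gives the free/non-free dichotomy, not equality of Jordan types. Finally, for naturality in (2), the issue is not upgrading an objectwise isomorphism (a natural transformation that is an objectwise isomorphism is by definition a natural isomorphism) but producing the natural transformation at all; in \cite{FPS} it comes from the explicit factorization data over a common field extension, not from an abstract existence argument.
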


For a finite dimensional $kG$-module $M$ and a $\pi$-point $\alpha_K$ we denote by $\rk^j\{\alpha_K, M\}$ the rank of $\alpha_K(x^j)$ as an operator on $M_K$. 

\begin{definition}
\label{defn:rank} Let $M$ be a finite dimensional $kG$-module. We say that $M$ has {\it constant $j$-rank} if $\rk^j\{\alpha, M\}$   is independent of the choice of the $\pi$-point $\alpha$ defined over $k$.
\end{definition} 
Since $k$ is assumed to be algebraically closed, this definition is equivalent to requiring that $\rk^j\{\alpha_K, M\}$ is independent of $\alpha_K$ for any $\pi$-point $\alpha_K$ and any field extension $K/k$.  

Let $M$ be a $k[x]/x^p$-module, and let $\JType(x, M) = [p]^{a_p}\ldots[1]^{a_1}$   be the Jordan canonical form  of $x$ considered as an operator on $M$. The relations between the exponents $a_i$ and  the ranks of $x^j$ as operators on $M$ can be expressed explicitly with the following formulas: 
\begin{equation} 
\rk\{x^j, M\} = a_{j+1} + \ldots + a_p.
\end{equation} 
Therefore, we can give two equivalent definitions of modules of constant Jordan type: 
\begin{definition}
A finite dimensional $kG$-module $M$ is a module of {\it constant Jordan type} if it has constant $j$-rank for all $j$, $1 \leq j \leq p-1$.  Equivalently, $M$ has the same  Jordan type $\JType(\alpha, M)$  for any  $\pi$-point $\alpha$ defined over $k$.  In this case we refer to $\JType(\alpha, M)$ as Jordan type of $M$. 
\end{definition} 

Let $\alpha: k[x]/x^p \to kG$ be a $\pi$-point. Just as the Jordan type $\JType(\alpha, M)$, the rank of $\alpha(x)$ as an operator on $M$ is not well defined on an equivalence class of $\pi$-points in general.  But it is well defined when the rank is maximal. 

\begin{theorem}[\cite{FPS}]\label{thm:indep}  Let $M$ be a finite dimensional $kG$-module, and let $\alpha: k[x]/x^p \to kG$ be a $\pi$-point such that $\rk^j(\alpha, M)$ is maximal among $\rk^j(\beta,M)$ for all $\pi$-points $\beta: k[x]/x^p \to kG$. Then  for any $\pi$-point $\alpha^\prime$ equivalent to $\alpha$, $\rk^j(\alpha^\prime, M) = \rk^j(\alpha, M)$.
\end{theorem}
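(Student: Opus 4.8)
The plan is to reduce the statement to a semicontinuity argument on the closure of the equivalence class $[\alpha]$ inside a suitable parameter space of $\pi$-points. First I would recall that, by Theorem~\ref{thm:main}, a $\pi$-point $\alpha:k[x]/x^p\to kG$ determines a point $\Psi([\alpha])\in\Proj\HHH^\bu(G,k)$, and two $\pi$-points are equivalent precisely when they map to the same point. The key input from \cite{FPS} that I would invoke is that the equivalence class $[\alpha]$ can be realized concretely: there is an abelian unipotent subgroup scheme $U\subseteq G_K$ (after extending scalars if necessary) together with a flat factorization through $KU$, and the various $\pi$-points in the class are cut out, up to the $\sim$-relation, by a family parametrized by (an open piece of) the cone over $\Psi([\alpha])$ in $V_r(U)$ or in $I/I^2$ for $U$ an elementary abelian object. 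In other words, I want to put $\alpha$ and every $\alpha'\sim\alpha$ into a single algebraic family $\{\gamma_t\}_{t\in Z}$ of $\pi$-points, where $Z$ is an irreducible variety, such that the function $t\mapsto\rk^j(\gamma_t,M)$ makes sense.

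The second step is the semicontinuity observation: for an algebraic family of operators, the rank function is lower semicontinuous, so the locus where $\rk^j(\gamma_t,M)$ attains its maximal value $r_{\max}$ is a \emph{dense open} subset $Z_{\max}\subseteq Z$. If I can arrange that the family $\{\gamma_t\}$ realizes \emph{every} $\pi$-point equivalent to $\alpha$ (in the sense that each $\alpha'\sim\alpha$ is conjugate, via an automorphism of $K[x]/x^p$ preserving the relevant filtration, or via base change, to some $\gamma_t$ in a way that does not change $\rk^j$), then the hypothesis that $\rk^j(\alpha,M)$ is globally maximal among all $\pi$-points forces $\alpha$ to lie in $Z_{\max}$; and since $Z_{\max}$ is dense open in the irreducible $Z$, and the rank is constant (equal to $r_{\max}$) on all of $Z_{\max}$, every $\alpha'$ in the family that is equivalent to $\alpha$ also has $\rk^j(\alpha',M)=r_{\max}=\rk^j(\alpha,M)$. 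The point is that \emph{maximality is an open condition}, so if one member of the family is maximal, a dense open set of members are, and one then has to rule out the members in the closed complement — but those cannot be equivalent to $\alpha$ without contradicting maximality, essentially because equivalence inside the family respects the geometric point $\Psi([\alpha])$ and the maximal-rank locus, being open and nonempty, is forced to contain the whole fiber over a generic enough point.

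The main obstacle, and the reason this is nontrivial, is precisely the construction in the first step: showing that \emph{all} $\pi$-points in a fixed equivalence class can be captured by, or compared against, a single irreducible algebraic family in a rank-preserving way. A $\pi$-point is only an algebra map, not a Hopf map, and equivalence is defined representation-theoretically (via freeness of restrictions), so a priori two equivalent $\pi$-points $\alpha_K$ and $\beta_L$ could be defined over different, incomparable field extensions and factor through different unipotent subgroups. The work of \cite{FPS} handles this by a careful analysis: one passes to a common extension, uses that the cohomological point $\Psi([\alpha])$ has a well-defined residue field and that $\pi$-points mapping to it are, up to the relation generated by scaling and quadratic-or-higher perturbations (as in Example~\ref{equiv:elem}), governed by the local structure of $V_r(G)$ near a point over $\Psi([\alpha])$. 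I would therefore organize the proof as: (i) reduce to $G$ abelian unipotent, hence to $K[x_1,\dots,x_r]/(x_i^p)$-type algebras, by the factorization in Definition~\ref{defn:pi}; (ii) in that setting write the family explicitly using $I/I^2$ and the scaling/perturbation description of $\sim$; (iii) apply lower semicontinuity of rank to conclude that the maximal-$j$-rank locus is dense open and hence, by the maximality hypothesis, contains $\alpha$ together with every $\pi$-point equivalent to it. The genuinely hard estimate is (ii), controlling how $\rk^j(\gamma,M)$ behaves under the $I^2$-perturbations that generate the equivalence relation; semicontinuity alone handles scaling trivially, but the perturbation direction is where one needs the full force of the argument in \cite{FPS}.
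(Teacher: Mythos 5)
The survey you are working from does not prove this theorem --- it is quoted from \cite{FPS} --- so your argument has to stand on its own. Your setup is sensible and matches the first half of the genuine argument: reduce, via the factorization in Definition~\ref{defn:pi} and a common field extension, to two $\pi$-points landing in an abelian unipotent group algebra, where (as in Example~\ref{equiv:elem}) equivalence is generated by scaling and by perturbations $\alpha'(x)=\alpha(x)+u$ with $u\in I^2$; then observe that $j$-rank is lower semicontinuous in algebraic families, so that in the family $\gamma_c(x)=\alpha(x)+cu$ the maximal-rank locus is dense open and the generic rank $r_{\mathrm{gen}}$ equals $\rk^j(\alpha,M)$. (For that last equality you also need, and should state, that the maximum over $k$-rational $\pi$-points bounds the rank at $\pi$-points over extension fields; this again follows from semicontinuity together with $k=\bar k$.)

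The gap is in your concluding step. Lower semicontinuity gives only $\rk^j(\gamma_c,M)\le r_{\mathrm{gen}}=\rk^j(\alpha,M)$ at every special parameter value, in particular at $c=1$, i.e.\ at $\alpha'$; it provides no lower bound there. Your attempt to exclude the bad case --- ``the members in the closed complement \dots cannot be equivalent to $\alpha$ without contradicting maximality'' --- is circular: in this family \emph{every} member $\gamma_c$ with $c\in k$ is equivalent to $\alpha$, since $cu\in I^2$, and an equivalent $\pi$-point of strictly smaller $j$-rank would not contradict the hypothesis that $\rk^j(\alpha,M)$ is maximal; ruling out exactly that possibility \emph{is} the theorem. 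So ``maximality is an open condition'' shows that the non-maximal locus is a proper closed subset of the family, not that it avoids the particular point $c=1$ you care about. Closing this requires the hard input of \cite{FPS}: a direct comparison of the rank of $(\theta+u)^j$ with that of $\theta^j$ for $u\in I^2$ (after normalizing $\theta$ by an algebra automorphism of $K[t_1,\dots,t_r]/(t_i^p)$), which is not a formal consequence of semicontinuity. Since you explicitly defer precisely this step to the reference, what you have is an outline of the reduction rather than a proof.
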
 

\begin{remark}  A careful reader may notice a discrepancy between the definition of modules of constant Jordan type  given here and  in D. Benson's article.  Benson chooses generators of the augmentation ideal of $kE$ for an elementary abelian $p$-group $E$, and then considers Jordan type of  linear combinations of those generators.  Our definition in terms of $\pi$-points is ``generator-independent" and might appear to be a stronger condition.  Thanks to Theorem~\ref{thm:indep} these two approaches are equivalent. Indeed, if we choose a different set of generators of the augmentation ideal of $kE$, we do not change the equivalence class of a $\pi$-point as seen in Example~\ref{equiv:elem}. Hence, Theorem~\ref{thm:indep} implies that the property of constant rank, and, hence, constant Jordan type, does not depend upon the choice of generators.  
\end{remark}

Using local Jordan type, we can define new geometric invariants of modules. 
\begin{definition}
Let $G$ be a finite group scheme, and $M$ be a finite dimensional $kG$-module. The {\it non-maximal} $j$-rank  variety  of $M$ is the following subset of $\Pi(G)$:
\[\Gamma^j(G)_M = \{ [\alpha_K]\in \Pi(G) \; | \,  \rk^j(\alpha_K, M) \text{ is not maximal} \}. \]
The set  
\[\Gamma(G)_M = \bigcup\limits_{j=1}^{p-1} \Gamma^j(G)_M\]  
is called the {\it non-maximal rank variety of $M$}.
\end{definition} 

\begin{proposition}[\cite{FP10}]
\begin{enumerate} \item
The varieties $\Gamma^j(G)_M$ are closed proper subvarieties of $\Pi(G)$.
\item $\Gamma(G)_M^j = \emptyset$ if and only if $M$ is a module of constant $j$-rank. 
\item $\Gamma(G)_M = \emptyset$ if and only if $M$ is a module of constant Jordan type.  
\end{enumerate} 
\end{proposition}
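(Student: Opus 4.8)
The plan is to reduce parts (2) and (3) to the definitions, obtain properness in (1) for free, and concentrate the real work on the closedness assertion in (1), which I would attack by lower semicontinuity of rank in an algebraic family of $p$-nilpotent operators.

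\emph{Parts (2), (3), and properness in (1).} By Theorem~\ref{thm:indep} the property that $\rk^j(\alpha_K,M)$ be maximal among all $\pi$-points depends only on the equivalence class $[\alpha_K]$, so $\Gamma^j(G)_M$ is a well-defined subset of $\Pi(G)$. Write $r_j(M):=\max_{\beta_L}\rk^j(\beta_L,M)$; this is a maximum of nonnegative integers bounded by $\dim M$, hence attained, and by construction $[\alpha_K]\notin\Gamma^j(G)_M$ exactly when $\rk^j(\alpha_K,M)=r_j(M)$. Thus $\Gamma^j(G)_M=\emptyset$ iff $\rk^j(\alpha_K,M)=r_j(M)$ for every $\pi$-point $\alpha_K$, which since $k$ is algebraically closed is equivalent, by the remark following Definition~\ref{defn:rank}, to the same equality for all $\pi$-points defined over $k$, i.e.\ to $M$ having constant $j$-rank; this proves (2). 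For (3), $\Gamma(G)_M=\bigcup_{j=1}^{p-1}\Gamma^j(G)_M$ is empty iff each $\Gamma^j(G)_M$ is, iff $M$ has constant $j$-rank for all $j$; since $\rk^j(\alpha,M)=a_{j+1}+\cdots+a_p$ whenever $\JType(\alpha,M)=[p]^{a_p}\cdots[1]^{a_1}$, this holds iff all multiplicities $a_i$ are independent of $\alpha$, i.e.\ iff $M$ is of constant Jordan type. Finally, properness in (1) is immediate: since $r_j(M)$ is attained at some $\pi$-point, its class lies outside $\Gamma^j(G)_M$, so $\Gamma^j(G)_M$ is a proper subset of $\Pi(G)$, and likewise $\Gamma(G)_M\neq\Pi(G)$.

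\emph{Closedness in (1): the infinitesimal case.} Here the operator $\Theta$ of Section~\ref{sec:theta} provides exactly the family needed. Suppose $G$ has height $r$ and put $R=k[V(G)]$. The universal one-parameter subgroup $\mathfrak u\colon\bG_{a(r),R}\to G_R$ (the element of $V(G)(R)$ corresponding to $\id_R$ under Theorem~\ref{thm:repr}), composed with the map $\epsilon$ of Notation~\ref{not:epsilon}, is a ``universal $\pi$-point'' $R[x]/x^p\to RG$, yielding an $R$-linear $p$-nilpotent operator $\Theta$ on the free $R$-module $M\otimes_k R$. Its $j$-th power $\Theta^j$ is represented by a $(\dim M)\times(\dim M)$ matrix over $R$; its specialization at a closed point $v\in V(G)$, which since $k$ is algebraically closed is a genuine one-parameter subgroup $\mu_v$ over $k$, is the operator $(\mu_{v\ast}\!\circ\epsilon)(x)^j$ on $M$, and its specialization at an arbitrary scheme point $\mathfrak p$ is the $j$-th power operator of the $\pi$-point over $k(\mathfrak p)$ that $\mathfrak p$ represents under the homeomorphism $\Proj V(G)\simeq\Pi(G)$ obtained by combining Theorems~\ref{thm:inf} and~\ref{thm:main}; in particular the rank of $\Theta^j$ at $\mathfrak p$ equals $\rk^j$ of that $\pi$-point on $M$. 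Consequently $\{\,\mathfrak p:\rk(\Theta^j_{\mathfrak p})\le r_j(M)-1\,\}$ is the common vanishing locus of the $r_j(M)\times r_j(M)$ minors of the matrix of $\Theta^j$ over $R$; these minors are homogeneous, so this locus is conical and descends to a Zariski-closed subset of $\Proj V(G)$, which is carried (closed sets to closed sets, non-closed points included) by the $p$-isogeny homeomorphism of Theorems~\ref{thm:inf} and~\ref{thm:main} onto exactly $\Gamma^j(G)_M\subset\Pi(G)$.

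\emph{General $G$, and the main obstacle.} For a finite group scheme that is not infinitesimal there is no global operator $\Theta$, and bridging this gap is the crux of the whole argument. I would handle it in one of two ways: (i) reduce to the infinitesimal case by covering $\Pi(G)$ with the finitely many (by Noetherianness) images of the maps $\Pi(H)\to\Pi(G)$ induced by the unipotent abelian subgroup schemes $H$ through which $\pi$-points factor, invoke the operator $\Theta$ for each $\Pi(H)$, and verify that $\Gamma^j(G)_M$ meets each such image in the image of $\Gamma^j(H)_{M\downarrow_H}$, which is closed by the infinitesimal case; or (ii) use the representation-theoretic scheme structure on $\Pi(G)$ of \cite{FP07} to manufacture, Zariski-locally over $\Proj\HHH^\bu(G,k)$, a local model for the universal $\pi$-point and hence for $\Theta^j$, and glue the resulting minor loci. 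In either approach the two delicate ingredients are the ones already visible above: (a) the exact matching, over every field extension $K$, of the rank of the universal operator at a point with $\rk^j(\alpha_K,M)$---this is precisely where Theorem~\ref{thm:indep} is indispensable, since away from its maximum $\rk^j(\alpha_K,M)$ is not an invariant of $[\alpha_K]$ at all; and (b) keeping track of the non-closed points of $\Pi(G)$, which is what obliges one to run the argument scheme-theoretically (minors, Fitting ideals, $p$-isogenies) rather than merely on $k$-points.
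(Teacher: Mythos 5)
Your handling of (2), (3), properness, and the well-definedness of $\Gamma^j(G)_M$ on equivalence classes via Theorem~\ref{thm:indep} is correct, and your closedness argument for infinitesimal $G$ --- realizing $\Gamma^j(G)_M$ as the image, under the homeomorphism of Theorem~\ref{thm:main}, of the conical vanishing locus of the $r_j(M)\times r_j(M)$ minors of the matrix of $\Theta^j$ over $k[V(G)]$ --- is exactly the mechanism used in \cite{FP10}; the observation that Theorem~\ref{thm:indep} is what entitles you to test maximality on the single representative $\theta_\nu$ of each class is the right point to isolate.

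The gap is the general finite group scheme case, which is the actual content of the cited theorem, and your sketch (i) does not work as stated. First, the asserted identity ``$\Gamma^j(G)_M$ meets the image of $\Pi(H)$ in the image of $\Gamma^j(H)_{M\downarrow_H}$'' is false: membership in $\Gamma^j(H)_{M\downarrow_H}$ means failing to attain the maximum of $\rk^j$ over $\pi$-points \emph{of $H$}, which can be strictly smaller than $r_j(M)$. If the generic $j$-rank of $M\downarrow_{H}$ is below $r_j(M)$, the entire image of $\Pi(H)$ lies in $\Gamma^j(G)_M$ while $\Gamma^j(H)_{M\downarrow_H}$ is a proper subset of $\Pi(H)$. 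The set you must push forward is $\{\nu : \rk(\theta_\nu^j)<r_j(M)\}$, cut out against the \emph{global} maximum; this is still closed by the minors argument, so the error is repairable, but as written the step fails. Second, and more seriously, the covering itself is not available for free: the abelian unipotent subgroup schemes $U$ through which $\pi$-points factor are defined only over varying field extensions $K/k$, there are in general infinitely many of them, and neither the finiteness of the relevant images in $\Pi(G)$ nor their closedness follows from Noetherianness of $\Pi(G)$. Reducing to finitely many quasi-elementary subgroup schemes (which, note, are not infinitesimal --- they have an \'etale elementary abelian factor, so one also needs the universal operator in that setting) and controlling the images of $\Pi(U)\to\Pi(G)$ is precisely the Quillen-stratification-type input from \cite{FP07} and \cite{FPS} on which the proof in \cite{FP10} rests; it must be invoked explicitly, not deferred to ``Noetherianness'' or to an unproved local model as in your alternative (ii).
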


We refer the reader to \cite{CFP} and \cite{FP10} for further properties and examples of non-maximal rank varieties.    
We mention here one of their obvious virtues: the non-maximal variety is always  properly contained in the ambient space $\Pi(G)$.   Hence, it carries non-trivial information for any module - in particular, for all those modules for which the support variety coincides with $\Pi(G)$. 
Before moving on to ``global invariants" in the next section we point out one important property that non-maximal rank varieties share with support varieties: they are invariants of the connected components of the  stable Auslander-Reiten quiver of $G$.

\begin{theorem}[\cite{FP10}] 
Let G be a finite group
scheme. Let $\Theta$ be a connected component of the stable Auslander-Reiten quiver of $kG$. Then for any two indecomposable modules $M$, $N$, belonging to $\Theta$ and any $j$, $1\leq j \leq p-1$, 
$\Gamma^j(G)_M = \Gamma^j(G)_N$.
\end{theorem}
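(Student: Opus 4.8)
The plan is to adapt the by-now standard argument that a support-type invariant is constant on connected components of the stable Auslander--Reiten quiver (as for cohomological support varieties in the work of Webb and of Farnsteiner), and then to isolate the one place where $\Gamma^j$ behaves worse than the cohomological support and needs a new ingredient. \emph{Step 1 (reduction to neighbours).} Since $\Theta$ is connected, it is enough to prove $\Gamma^j(G)_M=\Gamma^j(G)_N$ when $M$ and $N$ are joined by an arrow of $\Theta$, i.e.\ when there is an irreducible morphism between them; concatenating such equalities along a path in $\Theta$ then gives the general case.

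\emph{Step 2 (invariance under the translate).} First I would show $\Gamma^j(G)_{\tau M}=\Gamma^j(G)_M$. By the equivalence \eqref{equi} the Hopf algebra $kG$ is cocommutative, so its antipode squares to the identity, and therefore the Nakayama functor of the Frobenius algebra $kG$ is --- up to an inner twist, which acts as the identity on $\stmod kG$ --- given by $-\otimes k_\lambda$ for a one-dimensional $kG$-module $k_\lambda$; hence $\tau M$ is stably isomorphic to $\Omega^2(M)\otimes k_\lambda$. For the $\Omega^2$ factor: a $\pi$-point $\alpha_K\colon K[x]/x^p\to KG$ is flat, so $KG$ is free over $K[x]/x^p$ via $\alpha_K$ and $\alpha_K^*$ is exact and carries projective $KG$-modules to free $K[x]/x^p$-modules, whence $\alpha_K^*(\Omega^2 M)\cong\Omega^2(\alpha_K^*M)$ modulo a free summand; since over $K[x]/x^p$ the syzygy sends a Jordan block of size $i<p$ to one of size $p-i$ and annihilates a block of size $p$, a direct computation gives $\rk^j(\alpha_K,\Omega^2 M)=\rk^j(\alpha_K,M)+c$ with $c=(\dim\Omega^2 M-\dim M)/p\in\Z$ independent of $\alpha_K$, so the locus where $\rk^j(-,M)$ is maximal does not change. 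For the $-\otimes k_\lambda$ factor: every $\pi$-point factors through $KU$ for an abelian unipotent $U\subset G_K$, and a unipotent group scheme has no non-trivial characters, so $k_\lambda$ restricts to the trivial $KU$-module and $(M\otimes k_\lambda)|_{KU}\cong M|_{KU}$ as $KU$-modules; hence $\alpha_K^*(M\otimes k_\lambda)\cong\alpha_K^*(M)$ and $\rk^j$ is literally unchanged. Combining the two factors, $\Gamma^j(G)_{\tau M}=\Gamma^j(G)_M$, and symmetrically $\Gamma^j(G)_{\tau^{-1}M}=\Gamma^j(G)_M$.

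\emph{Steps 3 and 4 (direct sums and crossing an arrow).} From $\rk^j(\alpha_K,A\oplus B)=\rk^j(\alpha_K,A)+\rk^j(\alpha_K,B)$ together with the fact that $\rk^j(-,A)$ and $\rk^j(-,B)$ each attain their maximum on a dense open subset of $\Pi(G)$ (the rank of $\alpha_K(x^j)$ being lower semicontinuous in $\alpha_K$), one gets that $\rk^j(-,A\oplus B)$ is maximal at $\alpha_K$ iff both summands are; hence $\Gamma^j(G)_{A\oplus B}=\Gamma^j(G)_A\cup\Gamma^j(G)_B$, so in particular $\Gamma^j(G)_X\subseteq\Gamma^j(G)_E$ whenever $X$ is a direct summand of $E$. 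Now suppose there is an irreducible morphism $M\to N$, and consider the two almost split sequences $0\to\tau N\to E\to N\to0$ with $M$ a summand of $E$, and $0\to M\to E'\to\tau^{-1}M\to0$ with $N$ a summand of $E'$. Restricting the first along a $\pi$-point $\alpha_K$ gives a short exact sequence of $K[x]/x^p$-modules, and the snake lemma for the operator $\alpha_K(x^j)$ yields $\rk^j(\alpha_K,E)=\rk^j(\alpha_K,\tau N)+\rk^j(\alpha_K,N)+\rk(\delta_{\alpha_K})$ with $\delta_{\alpha_K}$ the connecting map and $\rk(\delta_{\alpha_K})\ge0$. Using Step~2, $\rk^j(-,\tau N)-\rk^j(-,N)$ is a constant, so $N$ and $\tau N$ are simultaneously maximal; the aim is to conclude that whenever $\alpha_K$ achieves the maximal $j$-rank on $N$ it also does so on $E$, hence on its summand $M$, giving $\Gamma^j(G)_M\subseteq\Gamma^j(G)_N$; the second sequence then gives the reverse inclusion, and with Step~1 the theorem follows.

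\emph{The main obstacle} is the last implication in Step 4, which is genuinely harder than its counterpart for cohomological supports. The invariant $\Gamma^j(G)_{(-)}$ is \emph{not} subadditive on an arbitrary short exact sequence --- an extension of two modules of constant $j$-rank can fail to have constant $j$-rank, since the connecting maps $\delta_{\alpha_K}$ may drop rank along a closed subset --- so one may not simply invoke $\Gamma^j(G)_E\subseteq\Gamma^j(G)_{\tau N}\cup\Gamma^j(G)_N$. What is needed instead is the rigidity of \emph{almost split} sequences: one must show that the maximal $j$-rank is additive along them, i.e.\ that at a $\pi$-point where $N$ (equivalently $\tau N$) attains maximal $j$-rank the connecting map $\delta_{\alpha_K}$ attains its largest possible value, so that $\rk^j(\alpha_K,E)$ is maximal. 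Here I would bring in the exact, tensor-preserving functor $\alpha_K^*\colon\stmod G\to\stmod K[x]/x^p$ attached to a generic $\pi$-point (Theorem~\ref{thm:functor}) and the constraint it imposes, together with $\tau$-periodicity along $\Theta$, on how the middle terms of almost split sequences can restrict --- and it is here that the argument is most delicate.
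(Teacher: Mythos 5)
Your Steps 1--3 are essentially correct and follow the standard architecture (which is also that of \cite{FP10}): reduction to arrows of $\Theta$, the identification $\tau\simeq\Omega^2(-)\otimes k_\lambda$ with $k_\lambda$ trivial on abelian unipotent subgroup schemes, and the computation that $\rk^j(\alpha_K,\Omega^2M)-\rk^j(\alpha_K,M)$ is a constant (your constant is off by a factor of $p-j$: one gets $(p-j)(\dim\Omega^2M-\dim M)/p$, but this does not affect the conclusion). The problem is that the proof stops exactly at the implication that carries the whole theorem. In Step 4 the statement that a $\pi$-point at which $N$ attains its maximal $j$-rank is also one at which the middle term $E$ of the almost split sequence attains its maximal $j$-rank is announced as ``the aim'' and then, in your final paragraph, as ``what is needed'' --- it is never proved. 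You correctly diagnose that this amounts to showing that $\rk(\delta_{\alpha_K})$ is itself maximal wherever $\rk^j(-,N)$ is, but the tools you propose cannot deliver this: the exact tensor functor of Theorem~\ref{thm:functor} is attached only to \emph{generic} $\pi$-points, so it controls the maximal value of $\rk^j(-,E)$ but says nothing about the connecting homomorphism at the arbitrary, typically non-generic, points of $\Pi(G)\setminus\Gamma^j(G)_N$ where the inclusion $\Gamma^j(G)_M\subseteq\Gamma^j(G)_N$ must actually be verified; and ``$\tau$-periodicity along $\Theta$'' is neither available in general nor relevant. The function $\alpha\mapsto\rk(\delta_{\alpha})$ is a difference of lower semicontinuous functions and has no a priori semicontinuity, so no openness/density argument will close this.

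What is missing is an input coming from the almost split property itself: the lifting property of $E\to N$ against non-split maps to $N$, transported through the induction--restriction adjunction for the flat map $\alpha_K\colon K[x]/x^p\to KG$ (the restriction of the class $[\mathcal E]$ along $\alpha_K$ is its pullback along the counit $KG\otimes_{K[x]/x^p}\alpha_K^*N_K\to N_K$, so $\alpha_K^*(\mathcal E)$ splits unless that counit is a split surjection). It is this dichotomy, not any rank-semicontinuity, that controls $\delta_{\alpha_K}$ in \cite{FP10}, and it is precisely the step your sketch outsources to Theorem~\ref{thm:functor}. Two smaller leaks you should also patch: in Step 3 the identity $\Gamma^j(G)_{A\oplus B}=\Gamma^j(G)_A\cup\Gamma^j(G)_B$ needs the open loci where $\rk^j(-,A)$ and $\rk^j(-,B)$ are maximal to intersect, which is automatic only when $\Pi(G)$ is irreducible; and the base-change issues in Step 4 (whether $\mathcal E$ remains almost split over $KG$ for the field extensions $K/k$ occurring in $\pi$-points) are not addressed.
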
 

\section{Global $p$-nilpotent operator  and vector bundles on $\bP(G)$}
\label{sec:theta}
We now specialize to the case of an infinitesimal group scheme $G$ (with a side note that elementary abelian $p$-groups are included in consideration by virtue of Example~\ref{ex:elem}).  

Let  $G$ be an infinitesimal group scheme of height $r$.  Recall that the functor of ``one-parameter subgroups" of $G$, $V(G)$, is representable by the coordinate algebra $A=k[V(G)]$, so that we have a natural isomorphism (\ref{thm:repr})
\begin{equation} 
\label{eq:natur} 
\Hom_{\rm grp. sch}( \bG_{a(r),R}, G_R)  \simeq \Hom_{k-\rm alg}(A, R)
\end{equation}   
for any commutative $k$-algebra $R$.   Taking $R = A$,   we   get an isomorphism 
\begin{equation}\label{eq:univ} \Hom_{\rm grp. sch}( \bG_{a(r),A}, G_A)  \simeq \Hom_{k-\rm alg}(A, A).\end{equation} 
\begin{definition}
The {\it universal one-parameter subgroup} of $G$ is the one-parameter subgroup 
\[\cU: \bG_{a(r),A} \to G_A\]
which corresponds to the identity map $\id_{A}: A \to A$ via the isomorphism \eqref{eq:univ}.
\end{definition}
By naturality,  any one-parameter subgroup $\mu: \bG_{a(r),R} \to G_R$ can be obtained from $\cU: \bG_{a(r),A} \to G_A$ via base change $\mu=\cU \otimes_A R$ where $R$ is given the structure of an $A$-module via the map $f_\mu: A \to R$ corresponding to $\mu$  in \eqref{eq:natur}. 

The universal one-parameter subgroup $\cU$ induces an $A$-linear homomorphism of coordinate algebras: 
\[\cU^*: k[G] \otimes A \to k[\bG_{a(r)}] \otimes A\] 
Dualizing, we get an $A$-linear homomorphism 
\[\cU_*: k\bG_{a(r)} \otimes A\to kG \otimes A.\]
For the following definition we utilize   the map 
$$
\epsilon: k[x]/x^p \to k\bG_{a(r)}\simeq k[u_0, \ldots, u_{r-1}]/(u_0^p, \ldots, u_{r-1}^p)
$$ of Notation~\ref{not:epsilon}. 
\begin{definition}   
The {\it global $p$-nilpotent operator} 
$$
\Theta \in kG \otimes k[V(G)] = kG \otimes A
$$ is defined as the image of the generator $x$ under the composition 
\[\xymatrix{k[x]/x^p \ar[r]^-\epsilon & k\bG_{a(r)} \ar@{^(->}[r] & k\bG_{a(r)} \otimes A  \ar[r]^-{\cU_*} & kG \otimes A,}\]
that is, \\[2pt]
\centerline{\fbox{$\Theta = \cU_*(\epsilon(x)\otimes 1)$.}}  
\end{definition}   	

\begin{notation}
\label{not:theta} Let $\nu$ be a point of $V(G)$ with the residue field $k(\nu)$ (hence, $\nu$ is given by a map $f_\nu: A \to k(\nu)$). We denote by 
\[\theta_\nu = \Theta \otimes_A k(\nu)\] 
the specialization of $\Theta \in kG \otimes A$ at the  point $\nu \in V(G)$.
\end{notation}
Let $\mu_{k(\nu)}: \bG_{a(r), k(\nu)} \to G_{k(\nu)}$  be the one-parameter subgroup that corresponds to the point $\nu \in V(G)$, and let 
$\alpha_{k(\nu)} = 
 \mu_{k(\nu), *} \circ\epsilon: k(\nu)[x]/x^p \to k(\nu)G$  be the $\pi$-point defined by the one-parameter subgroup $\mu_{k(\nu)}$ as in \eqref{eq:pi}. 
Then  
\[\alpha_{k(\nu)}(x) = \Theta \otimes_{A} k(\nu) = \theta_\nu, \]
and for any finite dimensional $kG$-module $M$, we have 
\[ \JType(\alpha_{k(\nu)}, M) = \JType(\theta_\nu, M).\]


For a $kG$-module $M$, $\Theta$ determines  a $p$-nilpotent $A$-linear operator: 
\[\Theta_M: M \otimes A \to M \otimes A \]
given by the formula
\[ m \otimes f \mapsto \Theta \cdot (m \otimes f) \]
We give some explicit examples of $\Theta$.   

\begin{example}
\label{ex:oper_Lie}
Let  $\fg$  be a restricted Lie algebra, and $G$ be the corresponding infinitesimal group scheme of height 1.  
Then $kG  \simeq \fu(\fg)$, and the reduced scheme $V(G)_{\rm red}$ corresponds to the affine variety $\cN_p$. We have therefore a projection map $kG \otimes k[V(G)] \to kG \otimes k[V(G)_{\rm red}] \simeq kG \otimes k[\cN_p] \simeq \fu(\fg) \otimes k[\cN_p]$. 
Abusing notation slightly, we consider the global $p$-nilpotent element $\Theta$ as an element in $\fu(\fg) \otimes k[\cN_p]$, effectively  factoring out the nilpotents in $k[V(G)]$. 

Let $x_1, \ldots, x_n$  be a basis of $\fg$; and let $y_1, \ldots, y_n$  be the dual basis of $\fg^\#$.  The embedding 
$\cN_p \subset \fg$ induces a surjective map of algebras $S^*(g^\#)= k[y_1, \ldots, y_n] \twoheadrightarrow k[\cN_p]$. We denote by   
$\{Y_1, \ldots, Y_n\}$ the images of $\{y_1, \ldots, y_n\}$ in $k[\cN_p]$. With this notation, we have the following formula  
for the global $p$-nilpotent operator 
$$\Theta 
= x_1 \otimes Y_1 + \ldots + x_n \otimes  Y_n 
\in u(\fg) \otimes k[\cN_p]$$
Any nilpotent element $x = \lambda_1x_1+\ldots + \lambda_nx_n \in \cN_p$ is a specialization of $\Theta$ for some values $(\lambda_1, \ldots, \lambda_n)$ of $(Y_1, \ldots, Y_n)$. 

For a  restricted $\fg$-module $M$, the action of the operator $\Theta_M: M \otimes k[\cN_p] \to M \otimes k[\cN_p]$ is given by the following formula: 
\[\Theta_M(m \otimes f) = \sum\limits_{i=1}^n x_im \otimes Y_if. \]
\end{example}

\begin{example} 
\label{ex:oper_elem} Take $\fg = \fg_a^{\oplus r}$ so that $\fu(\fg) \simeq kE$ for $E$ an elementary abelian $p$-group of rank $r$.  Then $\cN_p \simeq \bA^r$ is the affine $r$-space, and $k[\bA^r] \simeq k[Y_1, \ldots, Y_r]$. The global operator $\Theta$ is given by the same formula as in Example~\ref{ex:oper_Lie}: 
\[\Theta = x_1 \otimes Y_1 + \ldots + x_r \otimes Y_r.\] 
Hence, in the case of an elementary abelian $p$-group our operator $\Theta$ is given by the same formula as the operator $\theta$ in Benson's article in this volume, even though for historical reasons slightly  different notation is used.    
\end{example}
For infinitesimal group schemes of higher height the formulas for $\Theta$ become much more complicated. We refer the reader to multiple examples worked out in \cite[\S 2]{FP12}.  
 
\begin{theorem}[1.11 \cite{SFB1}, 2.10, 2.11 \cite{FP12}] 
Let $G$ be an infinitesimal group scheme of height $r$. The coordinate algebra $k[V(G)]$ is a graded connected $k$-algebra generated by homogeneous elements in degrees $\{1, p, \ldots, p^{r-1}\}$.  The global $p$-nilpotent operator $\Theta \in kG \otimes k[V(G)]$ is homogeneous of degree $p^{r-1}$ where the grading on $kG \otimes k[V(G)]$ is induced by the grading on $k[V(G)]$ with $kG \otimes k$ being assigned degree $0$.   
\end{theorem}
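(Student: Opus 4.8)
The plan is to establish the grading statement in two stages: first the grading on $k[V(G)]$ itself, and then the homogeneity of $\Theta$ with respect to the induced grading on $kG\otimes k[V(G)]$. For the first part, I would recall from Suslin-Friedlander-Bendel that $V(G)$ carries a $\bG_m$-action coming from the natural $\bG_m$-action on $\bG_{a(r)}$ by scalar multiplication: an element $t\in\bG_m$ acts on $\bG_{a(r)}$ compatibly with the Frobenius kernel structure (one checks that on $k[\bG_{a(r)}]=k[T]/T^{p^r}$ the element $t$ rescales $T$, hence rescales $T^{p^i}$ by $t^{p^i}$). Precomposing a one-parameter subgroup $\mu\colon\bG_{a(r),R}\to G_R$ with this action gives a new one-parameter subgroup, and by the representability in Theorem~\ref{thm:repr} this defines a $\bG_m$-action on the scheme $V(G)$, hence a $\Z$-grading on $A=k[V(G)]$. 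The statement that $A$ is connected (i.e.\ $A_0=k$) and is generated in degrees $\{1,p,\ldots,p^{r-1}\}$ is exactly \cite[1.11]{SFB1}: the key input is the SFB description of $V_r(G)$ via the "exponential" presentation, where the coordinate functions record the coefficients of the universal one-parameter subgroup expanded in terms of the divided-power-like parameters dual to $T,T^p,\ldots,T^{p^{r-1}}$, and these coefficients scale with weights $1,p,\ldots,p^{r-1}$ under the $\bG_m$-action.

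For the homogeneity of $\Theta$, the idea is to track the weight of each arrow in the defining composition
\[
k[x]/x^p \xrightarrow{\ \epsilon\ } k\bG_{a(r)} \hookrightarrow k\bG_{a(r)}\otimes A \xrightarrow{\ \cU_*\ } kG\otimes A.
\]
Here $kG\otimes k$ sits in degree $0$, so all the grading comes from the $A$-factor. The universal one-parameter subgroup $\cU\colon\bG_{a(r),A}\to G_A$ corresponds to $\id_A$, so it is $\bG_m$-equivariant for the scaling action on the source $\bG_{a(r),A}$ and the action on $V(G)$ twisting $A$; dualizing, $\cU_*\colon k\bG_{a(r)}\otimes A\to kG\otimes A$ is a graded map once we put on $k\bG_{a(r)}=k[u_0,\ldots,u_{r-1}]/(u_i^p)$ the grading with $\deg u_i=p^i$ (the weight of $u_i$ as linear dual to $T^{p^i}$). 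Since $\epsilon$ sends $x\mapsto u_{r-1}$, the element $\epsilon(x)\otimes 1$ is homogeneous of degree $p^{r-1}$, and applying the graded map $\cU_*$ we conclude $\Theta=\cU_*(\epsilon(x)\otimes 1)$ is homogeneous of degree $p^{r-1}$ in $kG\otimes A$. One should double-check that the grading on $k\bG_{a(r)}$ by $\deg u_i=p^i$ is exactly the one compatible with the $\bG_m$-action used to grade $A$ — this amounts to the observation that under $t\in\bG_m$ the function $T$ on $\bG_a$ has weight $1$, so $T^{p^i}$ has weight $p^i$, and the dual basis element $u_i$ therefore has weight $p^i$ as well (with the convention that a $\bG_m$-equivariant pairing forces dual weights to match, reading off the correct sign/convention from the identification $k\bG_{a(r)}\simeq k[u_0,\ldots,u_{r-1}]/(u_i^p)$).

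The main obstacle I anticipate is bookkeeping the $\bG_m$-equivariance precisely through the duality $kG=k[G]^\#$ and through the universal property defining $\cU$: one must be careful that the action on $V(G)$ induced by rescaling $\bG_{a(r)}$ indeed makes $\cU$ equivariant (this is where "corresponds to $\id_A$" is used — the identity map is tautologically equivariant once the $\bG_m$-action on $A$ is transported from the $\bG_m$-action on the functor of points), and that taking $k$-linear duals of the comorphisms reverses arrows without changing weights. Everything else — the grading of $k\bG_{a(r)}$, the value $\epsilon(x)=u_{r-1}$ of degree $p^{r-1}$, and the statement $A_0=k$ with generators in degrees $\{1,p,\ldots,p^{r-1}\}$ — is either a direct computation on $k[T]/T^{p^r}$ or a citation of \cite[1.11]{SFB1} together with the explicit examples in \cite[\S 2]{FP12}. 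So the proof is essentially: (i) produce the $\bG_m$-action hence the grading, citing SFB for the generator degrees and connectedness; (ii) observe $\cU$ is $\bG_m$-equivariant by construction; (iii) chase $\epsilon(x)=u_{r-1}$, of weight $p^{r-1}$, through the equivariant map $\cU_*$.
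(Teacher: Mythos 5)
The survey states this theorem as a citation (to \cite[1.11]{SFB1} and \cite[2.10, 2.11]{FP12}) and gives no proof of its own, so there is nothing in the paper to diverge from; your argument is correct and is essentially the one in those sources. In particular, your computation that precomposition with scaling by $s$ sends $u_{r-1}$ to $s^{p^{r-1}}u_{r-1}$ (since $s^*$ is diagonal on the monomial basis of $k[T]/T^{p^r}$), combined with the tautological equivariance of the universal one-parameter subgroup, is exactly how the homogeneity of $\Theta$ is established in \cite{FP12}, and the grading on $k[V(G)]$ via the $\bA^1$-monoid action by precomposition is how \cite{SFB1} obtain connectedness and the generator degrees $\{1,p,\ldots,p^{r-1}\}$.
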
   

Since $k[V(G)]$ is graded, there is a corresponding projective scheme $\Proj V(G)$. Let 
\[\bP(G) = \Proj k[V(G)]_{\rm red}\]
be the associated reduced projective scheme, and let $\cO_{\bP(G)}$ be the structure sheaf.  
Since $k[V(G)]$ is generated in degrees dividing $p^{r-1}$, the Serre twist $\cO_{\bP(G)}(p^{r-1})$ 
is a locally free sheaf on $\bP(G)$ (see \cite[4.5]{FP12}).  The homogeneity  of $\Theta$ implies that for any $kG$-module $M$, $\Theta_M$ induces a sheaf homomorphism
\[\xymatrix@=7mm{\wt \Theta_M: M \otimes \cO_{\bP(G)} \ar[r] & M \otimes \cO_{\bP(G)}(p^{r-1}).}\] 
We define several sheaves on $\bP(G)$ associated to a $kG$-module $M$ via the operator $\wt \Theta_M$. 
Let 
\[\wt M = M \otimes \cO_{\bP(G)}\]
denote the trivial vector bundle defined by the module $M$ (equivalently, free $\cO_{\bP(G)}$-module of rank $\dim M$). 
We can iterate $\wt \Theta_M$ once we twist it appropriately.  To avoid cumbersome notation, we use $\xymatrix@=6mm{\wt\Theta^j_M: \wt M \ar[r] & \wt M(jp^{r-1})}$ to denote the composition 
\[\xymatrix@=8mm{\wt M \ar[r]^-{\wt \Theta_M} & \wt M(p^{r-1}) \ar[rr]^-{\wt \Theta_M(p^{r-1})}&&  \wt M(p^{r-1}) \ar[rr]^-{\wt \Theta_M(2p^{r-1})} &&\ldots\ldots 
\ar[rr]^-{\wt \Theta_M((j-1)p^{r-1})} 
&& \wt M(jp^{r-1})}.\]  
An easy way to follow our convention is to keep in mind that all the sheaves we define should be subs, quotients  or subquotients of $\wt M$ (no twists).
\begin{definition}   Let $j$ be an integer, $1 \leq j \leq p$.
\[\cKer \wt \Theta^j_M := \cKer \{\wt\Theta^j_M: \wt M \to \wt M(jp^{r-1})\}\] 
\[\cIm \wt \Theta^j_M := \cIm \{\wt\Theta^j_M: \wt M(-jp^{r-1}) \to \wt M\}\]
\[\cCoker \wt \Theta^j_M  := \cCoker \{\wt\Theta^j_M: \wt M(-jp^{r-1}) \to \wt M\}\]
\[\cF_j(M) := \frac{\cKer \wt \Theta_M \cap 	 \cIm \wt \Theta^{j-1}_M}{\cKer \wt \Theta_M \cap 	 \cIm \wt \Theta^j_M}.\] 
\end{definition} 

\begin{theorem}[5.1 \cite{FP12}]\label{thm:bundles} 
Let $G$ be a finite group scheme, and let $M$ be a finite dimensional module of constant $j$-rank for some $j$, $1 \leq j \leq p$.  Then $\cKer \wt \Theta^j_M$, $\cIm \wt \Theta^j_M$, $\cCoker \wt \Theta^j_M$ are algebraic vector bundles (equivalently, locally free coherent sheaves) on $\bP(G)$. 
\end{theorem}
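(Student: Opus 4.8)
The plan is to reduce the assertion to a local, pointwise statement about the operators $\theta_\nu$ on $M$ and then invoke the standard criterion that a coherent sheaf on a reduced variety is locally free if and only if the fiber dimension is constant. Concretely, let $\nu \in \bP(G)$ be a closed point with residue field $k(\nu)$; the key observation is that, after the identification of Theorem~\ref{thm:repr} and the formula $\alpha_{k(\nu)}(x) = \theta_\nu$ recorded in Notation~\ref{not:theta}, the fiber of $\wt\Theta^j_M$ at $\nu$ is precisely the operator $\theta_\nu^j$ acting on $M_{k(\nu)}$. Therefore the fiber of $\cKer \wt\Theta^j_M$ at $\nu$ is $\ker(\theta_\nu^j)$, of dimension $\dim M - \rk^j(\alpha_{k(\nu)}, M)$, and similarly the fibers of $\cIm \wt\Theta^j_M$ and $\cCoker \wt\Theta^j_M$ have dimensions $\rk^j(\alpha_{k(\nu)}, M)$ and $\dim M - \rk^j(\alpha_{k(\nu)}, M)$ respectively. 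The constant $j$-rank hypothesis says exactly that $\rk^j(\alpha_{k(\nu)}, M)$ is independent of $\nu$ (using that $k$ is algebraically closed, this local rank is constant across all closed points and all field extensions), so all three sheaves have constant fiber dimension on $\bP(G)$.

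First I would set up the sheaf-theoretic framework carefully: since $k[V(G)]$ is generated in degrees dividing $p^{r-1}$, the sheaf $\cO_{\bP(G)}(p^{r-1})$ is a line bundle, so $\wt M(jp^{r-1})$ is a vector bundle of rank $\dim M$, and $\wt\Theta^j_M$ is a genuine morphism of vector bundles; hence $\cKer$, $\cIm$, $\cCoker$ are coherent sheaves on $\bP(G)$. Next I would prove the fiber-computation lemma: for a morphism $\varphi\colon \cE \to \cF$ of vector bundles, the natural map $(\cCoker\varphi)\otimes k(\nu) \to \cCoker(\varphi\otimes k(\nu))$ is always an isomorphism (cokernel is right exact), while for the kernel and image one uses that semicontinuity plus the $\cCoker$ computation pins down all three fiber dimensions simultaneously — the exact sequence $0 \to \cKer\varphi \to \cE \to \cE \to \cCoker\varphi \to 0$ twisted appropriately gives, fiber by fiber, $\dim_{k(\nu)}(\cKer\varphi\otimes k(\nu)) \ge \dim M - \rk(\varphi_\nu)$ with equality iff $\cIm\varphi$ and $\cKer\varphi$ are both locally free at $\nu$. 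Then I would quote the criterion (e.g.\ from Hartshorne or Eisenbud): a coherent sheaf $\cG$ on a reduced Noetherian scheme is locally free iff $\nu \mapsto \dim_{k(\nu)} \cG\otimes k(\nu)$ is locally constant. Applying this to $\cCoker \wt\Theta^j_M$ (whose fiber dimension is honestly computed by $\rk^j$ of the fiber operator, which is constant) gives that it is locally free; the four-term exact sequence then forces $\cIm \wt\Theta^j_M$ to be locally free, and reversing the roles (or running the same argument with $\cKer$) gives $\cKer \wt\Theta^j_M$ locally free as well.

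The main obstacle is the fiber-computation step, i.e.\ verifying that specialization of the global operator $\Theta$ at a point $\nu$ genuinely computes $\theta_\nu^j$ on $M_{k(\nu)}$ and that this is compatible with passing from the affine picture $M\otimes A$ to the projective sheaf $\wt M$ on $\bP(G) = \Proj k[V(G)]_{\rm red}$. One must check that passing to $k[V(G)]_{\rm red}$ does not lose information relevant to fiber dimension (it does not, since closed points and their residue fields are unchanged by reduction, and the rank of an operator only depends on the reduced structure), and that the homogeneity of $\Theta$ in degree $p^{r-1}$ makes the Serre-twist bookkeeping in the definition of $\wt\Theta^j_M$ consistent — this is where the convention ``all sheaves are subquotients of $\wt M$ with no twist'' does real work. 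A secondary subtlety worth flagging: Theorem~\ref{thm:bundles} is stated for a \emph{finite} group scheme $G$ while the construction of $\Theta$ was given only for infinitesimal $G$; for the elementary abelian case this is covered by Example~\ref{ex:elem}, so in the proof I would either restrict attention to infinitesimal $G$ (including elementary abelian $p$-groups via that equivalence) or note that $\bP(G)$ and the operator $\Theta$ are only defined in that generality, so no loss occurs.
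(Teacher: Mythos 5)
Your argument is correct and follows essentially the same route as the cited proof in \cite{FP12}: identify the fibers of $\wt\Theta^j_M$ with the operators $\theta_\nu^j$, use right-exactness to compute the fibers of the cokernel, invoke the criterion that a coherent sheaf on a reduced Noetherian scheme with locally constant fiber dimension is locally free, and then propagate local freeness to $\cIm$ and $\cKer$ via the four-term exact sequence (note the harmless typo: the sequence should read $0 \to \cKer\varphi \to \cE \to \cF \to \cCoker\varphi \to 0$). The subtleties you flag --- kernel not commuting with base change, passage to the reduced scheme, and the restriction to infinitesimal $G$ --- are exactly the ones the original proof must and does address.
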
 
\begin{remark} It was mistakenly stated in \cite[4.13, 5.1]{FP12} that Theorem~\ref{thm:bundles} held ``if and only if". This mistake was pointed out to the author by J. Stark, see \cite{St2} for counterexamples. The following statement, though, is true.
\end{remark}   

\begin{theorem}   Let $G$ be a finite group scheme, and $M$ be a finite dimensional $kG$-module. The following are equivalent: 
\begin{enumerate}
\item $M$ is a module of constant Jordan type; 
\item For each $i$, $ 1 \leq i \leq p$, $\cF_i(M)$ is an algebraic vector bundle on $\bP(G)$. 
\end{enumerate}
\end{theorem}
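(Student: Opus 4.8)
The plan is to extract both implications from two natural exact sequences among the sheaves $\cIm\wt\Theta^j_M$, $\cCoker\wt\Theta^j_M$ and $\cF_j(M)$ on the reduced Noetherian scheme $\bP(G)$, combined with three standard facts: (i) a coherent sheaf $\mathcal E$ on $\bP(G)$ is locally free iff $\nu\mapsto\dim_{k(\nu)}(\mathcal E\otimes k(\nu))$ is locally constant (this function is always upper semicontinuous, with generic value $\rk\mathcal E$); (ii) an extension of a locally free sheaf by a locally free sheaf is locally free; (iii) if $\mathcal E\twoheadrightarrow\mathcal F$ is a surjection of locally free sheaves then its kernel is locally free, because $\operatorname{Tor}_1(\mathcal F,k(\nu))=0$ forces the fibre dimension of the kernel to be the constant $\rk\mathcal E-\rk\mathcal F$. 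I suppress the Serre twists by the invertible sheaf $\cO_{\bP(G)}(p^{r-1})$ throughout; they do not affect any assertion.

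Write $N_j=\cKer\wt\Theta_M\cap\cIm\wt\Theta^j_M$, so that $\cKer\wt\Theta_M=N_0\supseteq N_1\supseteq\dots\supseteq N_p=0$ with $\cF_j(M)=N_{j-1}/N_j$. The first sequence records that the inclusions $\cIm\wt\Theta^j_M\subseteq\cIm\wt\Theta^{j-1}_M\subseteq\wt M$ produce a surjection $\cCoker\wt\Theta^j_M\twoheadrightarrow\cCoker\wt\Theta^{j-1}_M$ whose kernel is $\cIm\wt\Theta^{j-1}_M/\cIm\wt\Theta^j_M$. The second, which is the heart of the argument, is
\[
0\longrightarrow\cF_j(M)\longrightarrow\cIm\wt\Theta^{j-1}_M/\cIm\wt\Theta^j_M\longrightarrow\cIm\wt\Theta^{j}_M/\cIm\wt\Theta^{j+1}_M\longrightarrow 0
\qquad(1\le j\le p),
\]
with $\cIm\wt\Theta^0_M=\wt M$ and $\cIm\wt\Theta^p_M=0$; it is obtained from the identity $\cIm\wt\Theta^j_M=\wt\Theta_M(\cIm\wt\Theta^{j-1}_M)$, which makes $\wt\Theta_M$ induce an isomorphism $\cIm\wt\Theta^{j-1}_M/N_{j-1}\cong\cIm\wt\Theta^j_M$ carrying $\cIm\wt\Theta^j_M$ onto $\cIm\wt\Theta^{j+1}_M$, together with the identification $N_{j-1}\cap\cIm\wt\Theta^j_M=N_j$.

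For $(1)\Rightarrow(2)$: constant Jordan type gives constant $j$-rank for all $j$, so by Theorem~\ref{thm:bundles} every $\cIm\wt\Theta^j_M$ and every $\cCoker\wt\Theta^j_M$ is locally free; by (iii) the kernels $\cIm\wt\Theta^{j-1}_M/\cIm\wt\Theta^j_M$ of the first sequence are then locally free; and feeding this into the second sequence, each $\cF_j(M)$ is the kernel of a surjection of locally free sheaves, hence locally free. For $(2)\Rightarrow(1)$: suppose all $\cF_j(M)$ are locally free. Descending induction on $j$ in the second sequence, with base case $\cIm\wt\Theta^{p-1}_M\cong\cF_p(M)$ (take $j=p$), shows — via (ii) — that every $\cIm\wt\Theta^{j-1}_M/\cIm\wt\Theta^j_M$ is locally free. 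Since $\cCoker\wt\Theta^s_M=\wt M/\cIm\wt\Theta^s_M$ has a filtration with successive quotients $\cIm\wt\Theta^{t-1}_M/\cIm\wt\Theta^t_M$, $1\le t\le s$, fact (ii) makes every $\cCoker\wt\Theta^s_M$ locally free. But $\cCoker$ commutes with $-\otimes k(\nu)$, so $\dim_{k(\nu)}(\cCoker\wt\Theta^s_M\otimes k(\nu))=\dim M-\rk^s\{\alpha_{k(\nu)},M\}$; local freeness forces this to be locally constant, i.e. $\rk^s\{\alpha,M\}$ is independent of the $\pi$-point $\alpha$ on each connected component of $\bP(G)$. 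As $\bP(G)$ is connected in the cases of interest (for instance whenever $\HHH^\bu(G,k)$ is a domain; in general one argues component by component), this is exactly constant $s$-rank, and since it holds for all $s$, $M$ has constant Jordan type.

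The step I expect to be delicate is the construction of the second exact sequence: one must verify the isomorphism-theorem identifications $N_{j-1}\cap\cIm\wt\Theta^j_M=N_j$ and $\cIm\wt\Theta^{j-1}_M/(N_{j-1}+\cIm\wt\Theta^j_M)\cong\cIm\wt\Theta^{j}_M/\cIm\wt\Theta^{j+1}_M$, and keep the Serre-twist degrees consistent so that each ``surjection of locally free sheaves'' above really is one after twisting. Once that sequence is available, everything else is formal: it is precisely the device that splices the ``blocks of size exactly $j$'' datum $\cF_j(M)$ onto the image filtration of $\wt M$, thereby tying it to the cokernels $\cCoker\wt\Theta^s_M$ that govern the $j$-ranks, and hence the Jordan type.
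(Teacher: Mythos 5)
Your argument is correct, and it is essentially the proof that the paper itself does not write out but delegates to \cite[7.4.12]{Ben2} and \cite[3.9]{St2}: the standard device there is precisely your second exact sequence
\[
0\longrightarrow\cF_j(M)\longrightarrow\cIm\wt\Theta^{j-1}_M/\cIm\wt\Theta^j_M\longrightarrow\cIm\wt\Theta^{j}_M/\cIm\wt\Theta^{j+1}_M\longrightarrow 0
\]
(with the twist by $\cO_{\bP(G)}(p^{r-1})$ on the right-hand term that you correctly flag), spliced against the image filtration of $\wt M$ whose successive quotients assemble into the sheaves $\cCoker\wt\Theta^s_M$. Two remarks. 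First, the key choice you made --- running the converse direction through $\cCoker\wt\Theta^s_M$, which commutes with passage to fibres, rather than through $\cKer$ or through the fibres of $\cF_i(M)$ itself, which do not --- is exactly what makes the argument sound; the failure of kernels to commute with base change is what invalidated the ``if and only if'' form of Theorem~\ref{thm:bundles}, as the Remark following it records, so this is not a cosmetic point. Second, your last step only gives that $\rk^s\{\alpha,M\}$ is \emph{locally} constant on $\bP(G)$, and ``arguing component by component'' does not by itself upgrade this to the required global constancy if $\bP(G)$ is disconnected (which the general theory does not exclude, since $\Proj$ of a cone may separate irreducible components meeting only at the vertex). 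The clean repair is to read ``algebraic vector bundle'' as locally free of constant rank: the ranks of the $\cF_i(M)$ then determine the rank of each $\cIm\wt\Theta^{j-1}_M/\cIm\wt\Theta^j_M$, hence of each $\cCoker\wt\Theta^s_M$, globally, and constancy of the fibre dimension follows with no connectedness hypothesis; in the elementary abelian setting of \cite{Ben2}, where $\bP(G)\simeq\bP^{r-1}$, the issue does not arise at all.
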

\begin{proof} Even though  formulated for elementary abelian $p$-groups, the proof in {\cite[7.4.12]{Ben2}} goes through without change for any finite group scheme. See also \cite[3.9]{St2}.
\end{proof}
The sheaves $\cKer \wt \Theta^j_M$, $\cIm \wt \Theta^j_M$, $\cCoker \wt \Theta^j_M$, $\cF_i(M)$ provide global geometric invariants of representations of $G$ which carry more information than  support varieties or local Jordan type.  For a simple example, we show that the sheaves associated to representations distinguish between dual modules whereas local Jordan type is always the same for $M$ and $M^\#$. 
\begin{example}
 Let $G = \bG_{a(1)} \times \bG_{a(1)}$ so that $kG \simeq k[x,y]/(x^p, y^p)$.   
Let $W_n$  be  the $(2n+1)$--dimensional $kG$-module represented by the following diagram: 
$$
\begin{xy}*!C\xybox{%
\xymatrix{ {\bu} 
\ar@{.>}[dr]|y &&
{\bu} \ar[dl]|x \ar@{.>}[dr]|y
&&\dots &&{\bu} \ar[dl]|x\\
& \bu && \bu &\dots &  \bu &}}
\end{xy}
$$
In the diagram the dots represent the basis of $W_n$ as a $k$-vector space; the solid arrows give the action of $x$ and the dotted arrows give the action of $y$. The action of both generators on the bottom row is trivial.

Both $W_n$ and its dual $W_n^\#$ have constant  Jordan type $n[2] +  [1]$  
(see \cite[\S 2]{CFP}). We have $\bP(G) = \bP^1$, and the vector bundles 
$\cF_1$ are different for $W_n$ and  $W_n^\#$: 
\[ \cF_1(W_n) \simeq  \cO_{\bP^1}(-n) , \quad \cF_1(W_n^\#) \simeq \cO_{\bP^1}(n). \] 
\end{example}
There is  a general duality: 
\begin{proposition}[5.5 \cite{FP12}] Let $G$ be an infinitesimal group scheme, and
let $M$ be a finite dimensional $kG$-module of constant $j$-rank.  There is an isomorphism of $\cO_{\bP(G)}$-modules:
\[(\cKer \wt \Theta^j_M)^\vee \simeq \cCoker \wt \Theta^j_{M^\#}\]
where $(-)^\vee$ denotes the dual vector bundle. 
\end{proposition}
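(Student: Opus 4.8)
The plan is to reduce the statement to the elementary homological fact that the $\cO_{\bP(G)}$-linear dual $(-)^\vee$ reverses a four-term exact sequence of vector bundles, the key geometric input being that the dual of the sheaf map $\wt\Theta^j_M$ agrees, up to an irrelevant sign, with $\wt\Theta^j_{M^\#}$.

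First I would establish this transpose duality for $\Theta$ itself. Write $A=k[V(G)]$ and let $(-)^{*}$ denote the $A$-linear transpose with respect to the canonical perfect pairing $(M\otimes A)\times(M^{\#}\otimes A)\to A$. The claim is that $\Theta_{M^{\#}}=-(\Theta_M)^{*}$ as $A$-linear endomorphisms. Indeed, for $g\in kG$ the operator by which $g$ acts on $M^{\#}$ is $(S(g)|_M)^{*}$, where $S$ is the antipode of $kG$; extending $A$-linearly gives $\Theta_{M^{\#}}=\bigl((S\otimes\id_A)(\Theta)\bigr)_M^{*}$. Since $\cU\colon\bG_{a(r),A}\to G_A$ is a homomorphism of group schemes over $A$, the induced map $\cU_{*}\colon k\bG_{a(r)}\otimes A\to kG\otimes A$ is a morphism of Hopf algebras over $A$, hence commutes with antipodes, so $(S\otimes\id_A)(\Theta)=\cU_{*}\bigl(S(u_{r-1})\otimes1\bigr)$; and a direct computation in the divided power algebra $k\bG_{a(r)}\simeq(k[T]/T^{p^{r}})^{\#}$, in which $u_i$ is dual to $T^{p^{i}}$, shows $S(u_{r-1})=-u_{r-1}$ in characteristic $p$. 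Therefore $(S\otimes\id_A)(\Theta)=-\Theta$, so $\Theta_{M^{\#}}=-(\Theta_M)^{*}$ and $(\Theta^j_M)^{*}=(-1)^{j}\Theta^j_{M^{\#}}$. Since $\Theta$ is homogeneous of degree $p^{r-1}$, passing to associated sheaves on $\bP(G)$ and using $(\wt N)^{\vee}=\wt{N^{\#}}$ together with $(\cF(n))^{\vee}=\cF^{\vee}(-n)$ converts this into the identity of $\cO_{\bP(G)}$-module maps $\wt{M^{\#}}(-jp^{r-1})\to\wt{M^{\#}}$
\[ (\wt\Theta^j_M)^{\vee}=(-1)^{j}\,\wt\Theta^j_{M^{\#}}, \]
and the sign plays no role in the formation of kernels, images, or cokernels.

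Next I would set $\phi:=\wt\Theta^j_M\colon\wt M\to\wt M(jp^{r-1})$, so that $\cKer\phi=\cKer\wt\Theta^j_M$, while $\cIm\phi$ and $\cCoker\phi$ are the Serre twists by $jp^{r-1}$ of $\cIm\wt\Theta^j_M$ and $\cCoker\wt\Theta^j_M$ respectively. Since $M$ has constant $j$-rank, so does $M^{\#}$ (a matrix and its transpose have equal rank), and Theorem~\ref{thm:bundles} then guarantees that $\cKer\phi$, $\cIm\phi$, $\cCoker\phi$, and $\cCoker\wt\Theta^j_{M^{\#}}$ are all locally free $\cO_{\bP(G)}$-modules. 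With this in hand, I would break the four-term exact sequence
\[ 0\longrightarrow\cKer\phi\longrightarrow\wt M\stackrel{\phi}{\longrightarrow}\wt M(jp^{r-1})\longrightarrow\cCoker\phi\longrightarrow0 \]
into two short exact sequences with common middle term $\cIm\phi$; each of these has locally free cokernel, hence is locally split, so applying $(-)^{\vee}$ keeps both exact, and splicing them back together yields the exact sequence
\[ 0\longrightarrow(\cCoker\phi)^{\vee}\longrightarrow\wt{M^{\#}}(-jp^{r-1})\stackrel{\phi^{\vee}}{\longrightarrow}\wt{M^{\#}}\longrightarrow(\cKer\phi)^{\vee}\longrightarrow0. \]
This exhibits $\cCoker(\phi^{\vee})\simeq(\cKer\phi)^{\vee}=(\cKer\wt\Theta^j_M)^{\vee}$, and by the first step $\phi^{\vee}$ is, up to the harmless sign, precisely the defining map of $\cCoker\wt\Theta^j_{M^{\#}}$; the asserted isomorphism follows.

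I expect the first step to be the main obstacle: one has to be careful with the Hopf-algebraic bookkeeping, in particular with the identities $S(u_{r-1})=-u_{r-1}$ and $\cU_{*}\circ S=S\circ\cU_{*}$, so as to be certain that the $\cO_{\bP(G)}$-dual of $\wt\Theta^j_M$ is $\wt\Theta^j_{M^{\#}}$ up to a sign rather than a twisted or conjugated variant. I would also stress that the constant $j$-rank hypothesis is used essentially in the second step: if $\cCoker\phi$ failed to be locally free, dualizing the second short exact sequence would no longer preserve exactness and the isomorphism would genuinely break down --- the phenomenon is already visible locally, for multiplication by a function with a zero, where $\cCoker(\phi^{\vee})$ need not coincide with $(\cKer\phi)^{\vee}$.
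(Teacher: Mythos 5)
Your proposal is correct and follows essentially the same route as the proof in \cite{FP12} (the survey only cites the result without reproducing the argument): one first checks, via the antipode computation $S(u_{r-1})=-u_{r-1}$ and the fact that $\cU_*$ is a map of Hopf algebras over $A$, that the $\cO_{\bP(G)}$-dual of $\wt\Theta^j_M$ is $(-1)^j\wt\Theta^j_{M^\#}$ after the appropriate twist, and then dualizes the four-term sequence, which is exact because the constant $j$-rank hypothesis makes $\cKer$, $\cIm$, and $\cCoker$ locally free so that both short exact pieces are locally split. Your closing remarks correctly identify where the hypothesis is used and why the Hopf-algebraic bookkeeping is the only delicate point.
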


Even though finer invariants than support varieties and local Jordan types, the functors $M \mapsto \cKer\wt \Theta^j_M$ and others are by no means faithful.  In particular, they vanish on the interesting class of modules with constant kernel/image property studied, in particular, in \cite{CFS}. 

\begin{definition}{\cite[5.11]{FP12}} 
Let $M$ be a finite dimensional $kG$-module. We say that $M$ has {\it constant $j$-image property} if there exists a subspace $I(j) \subset M$ such that for any point $\nu \in V(G)$, $\nu \not = 0$, 
\[\Im\{\theta^j_\nu: M_{k(\nu)} \to  M_{k(\nu)}\} = I(j)_{k(\nu)}.\] 

Similarly, $M$ has {\it constant $j$-kernel property} if there exists a subspace $K(j) \subset M$ such that for any point $\nu \in V(G)$, $\nu \not = 0$, 
\[\Ker\{\theta^j_\nu: M_{k(\nu)} \to  M_{k(\nu)}\} = K(j)_{k(\nu)}.\]
\end{definition}

\begin{proposition}[5.12 \cite{FP12}]  
Let $M$ be a $kG$-module of constant $j$-rank.  
Then the algebraic vector bundle $\cIm \wt \Theta_M^j$ is trivial (i.e., a free coherent sheaf) 
 if and only if $M$ has constant $j$-image property. Similarly, $\ \cKer \wt \Theta_M^j$ is trivial if and only 
if $M$ has constant $j$-kernel property.
\end{proposition}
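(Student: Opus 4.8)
The plan is to realize $\cIm\wt\Theta^j_M$ as a \emph{sub}bundle of the trivial bundle $\wt M=M\otimes\cO_{\bP(G)}$ and to reduce the statement to the elementary fact that a free subbundle of a trivial bundle over a connected projective scheme must come from a single subspace of $M$. I would work throughout over $A_{\rm red}=k[V(G)]_{\rm red}$, with $\Theta$ pushed into $kG\otimes A_{\rm red}$ as in Example~\ref{ex:oper_Lie}; the case $\bP(G)=\emptyset$ is vacuous.

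First I would establish the key geometric input: under the constant $j$-rank hypothesis, $\cIm\wt\Theta^j_M$ is a rank-$d$ subbundle of $\wt M$, where $d=\rk^j\{\alpha,M\}$, whose fibre at each $\nu\ne 0$ is precisely $\Im\theta^j_\nu\subseteq M_{k(\nu)}$. Local freeness is Theorem~\ref{thm:bundles}; realizing the sheaf as the image of $\wt\Theta^j_M\colon\wt M(-jp^{r-1})\to\wt M$ gives a surjection of the fibre $(\cIm\wt\Theta^j_M)_\nu$ onto $\Im\theta^j_\nu$, and a dimension count (here is where constancy of the $j$-rank is used) forces it to be an isomorphism. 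The same reasoning, applied to the short exact sequence $0\to\cKer\wt\Theta^j_M\to\wt M\to(\cIm\wt\Theta^j_M)(jp^{r-1})\to 0$ whose third term is locally free, shows the sequence stays exact after $\otimes\,k(\nu)$ and hence identifies $\cKer\wt\Theta^j_M$ with a subbundle of $\wt M$ with fibre $\Ker\theta^j_\nu$.

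Granting this, the ``if'' direction is short: if $M$ has the constant $j$-image property with witness $I(j)\subseteq M$, split $M=I(j)\oplus C$ and note that for each $m$ the $C$-component of $\wt\Theta^j_M(m\otimes1)$ vanishes at every $\nu\ne 0$, hence vanishes identically because such points are dense in the positive-dimensional cone $V(G)_{\rm red}$; thus $\cIm\wt\Theta^j_M\subseteq I(j)\otimes\cO_{\bP(G)}$, and by the fibre computation this inclusion of rank-$d$ bundles is an isomorphism, so $\cIm\wt\Theta^j_M$ is free. Conversely, if $\cIm\wt\Theta^j_M$ is free of rank $d$, then since $\bP(G)$ is connected and reduced, $H^0(\bP(G),\cO_{\bP(G)})=k$, so $H^0(\bP(G),\wt M)=M$ and $I(j):=H^0(\bP(G),\cIm\wt\Theta^j_M)$ is a $d$-dimensional subspace of $M$; freeness makes the evaluation map identify $\cIm\wt\Theta^j_M$ with the constant subbundle $I(j)\otimes\cO_{\bP(G)}$, and comparing fibres gives $\Im\theta^j_\nu=I(j)_{k(\nu)}$ for all $\nu\ne 0$, i.e.\ the constant $j$-image property. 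The kernel statement follows by rerunning the argument with $\Ker\theta^j_\nu$ in place of $\Im\theta^j_\nu$ (now $\dim\Ker\theta^j_\nu=\dim M-d$), using the kernel half of the geometric input, or else by transporting the image case across the duality $(\cKer\wt\Theta^j_M)^\vee\cong\cCoker\wt\Theta^j_{M^\#}$ of \cite[5.5]{FP12}.

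I expect the geometric input of the second paragraph to be the main obstacle: identifying the fibre of the image (resp.\ kernel) sheaf with the pointwise image (resp.\ kernel) of $\theta^j_\nu$ is the one place where the constant-rank hypothesis is truly indispensable --- without it the fibre of $\cIm\wt\Theta^j_M$ can jump strictly above $\Im\theta^j_\nu$ at special points and the link between the global sheaf and the pointwise data dissolves. A secondary point one should not overlook is the appeal, in the converse direction, to the connectedness of $\bP(G)$ (so that $H^0(\cO_{\bP(G)})=k$ and a free subbundle of $\wt M$ is forced to arise from one subspace of $M$ rather than possibly different ones on different components); recall that $\bP(G)$ is homeomorphic to $\Proj\HHH^\bu(G,k)$, which is connected.
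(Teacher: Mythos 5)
The survey itself quotes this result from \cite[5.12]{FP12} without proof, so I am judging your argument on its own terms. Your route is the natural one, and most of it is sound: under the constant $j$-rank hypothesis the fibre of $\cIm \wt\Theta^j_M$ (resp.\ $\cKer\wt\Theta^j_M$) at the point of $\bP(G)$ below $\nu \in V(G)\setminus\{0\}$ really is $\Im\theta^j_\nu$ (resp.\ $\Ker\theta^j_\nu$) --- the surjection from the fibre of an image sheaf onto the pointwise image plus the dimension count via Theorem~\ref{thm:bundles} is exactly the right mechanism, and this is where constancy of rank is genuinely used. The ``if'' direction is also correct: the density argument forcing $\Im\Theta^j_M \subseteq I(j)\otimes A_{\rm red}$ and the Nakayama comparison of two locally free subsheaves of $\wt M$ of the same rank with equal fibres are both fine.

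The gap is in the converse, where the whole construction of a \emph{single} subspace $I(j)\subseteq M$ rests on the assertion that $H^0(\bP(G),\cO_{\bP(G)})=k$, which you justify by declaring $\bP(G)\simeq\Proj\HHH^\bu(G,k)$ to be connected. That is not a formal fact and you give no argument for it. Connectedness of the cone $V(G)$ is automatic (every component contains the origin) but does not imply connectedness of its Proj: a union of two linear subspaces meeting only at the origin is a connected cone whose Proj is disconnected, and nothing stated in the survey rules this out for $V(G)$. The point is load-bearing: if $\bP(G)$ had several connected components, then $H^0(\cO_{\bP(G)})=k^{\times(\#\text{components})}$, a free subsheaf of $\wt M$ would a priori be the constant subsheaf attached to a \emph{different} subspace of $M$ on each component, and your argument would only yield the constant $j$-image property component by component --- which is strictly weaker than Definition~5.11 as stated. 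So you must either prove that $\bP(G)$ is connected for the group schemes under consideration (or make that a hypothesis), or replace the global-sections step by an argument that does not need it. The remaining points --- the treatment of the kernel case via the locally split exact sequence, or alternatively via the duality with $\cCoker\wt\Theta^j_{M^\#}$ --- are fine once this is repaired.
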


Let $\cH^{[1]}(M) = \frac{\cKer \wt \Theta^{p-1}_M}{\cIm \wt \Theta_M}$.  Recall that a $kG$-module $M$ is endotrivial if 
$\End_k(M) \simeq k$ in $\stmod G$.    It was shown in \cite[\S 5]{CFP} that  an endotrivial 
module is a  module of constant  Jordan type with possible types $[p]^{a_p}[1]$ and $[p]^{a_p}[p-1]$.  
One can also characterize endotrivial modules in terms of the sheaf $\cH^{[1]}(M)$. 

\begin{proposition}[5.17 \cite{FP12}]
\label{endo} Let $G$  be an infinitesimal group scheme, and assume that $G$ has a subgroup scheme isomorphic to $\mathbb G_{a(1)}\times\mathbb G_{a(1)}$ or $\mathbb G_{a(2)}$. Let $M$  be a module of constant Jordan type.
Then $\cH^{[1]}(M)$  is a   line bundle (i.e., an algebraic vector bundle of rank one) 
if and only  if $M$  is endotrivial.    \end{proposition}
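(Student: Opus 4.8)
The plan is to analyze the sheaf $\cH^{[1]}(M) = \cKer \wt\Theta^{p-1}_M / \cIm \wt\Theta_M$ fiberwise over $\bP(G)$ and match the possible Jordan types of an endotrivial module against the condition that this sheaf be a line bundle. First I would recall from \cite[\S 5]{CFP} that an endotrivial module $M$ has constant Jordan type, with type either $[p]^{a}[1]$ or $[p]^{a}[p-1]$; since $M$ is a module of constant Jordan type (in particular constant $1$-rank and constant $(p-1)$-rank), Theorem~\ref{thm:bundles} guarantees that $\cKer \wt\Theta^{p-1}_M$, $\cIm \wt\Theta_M$, and hence the quotient $\cH^{[1]}(M)$, are algebraic vector bundles on $\bP(G)$. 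So the question is purely about the rank of this bundle, which can be computed at a single (general) point.

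The key computation is the rank of $\cH^{[1]}(M)$ at a point $\nu \in V(G)$, $\nu \neq 0$: this is $\dim \Ker\{\theta_\nu^{p-1}\} - \dim \Im\{\theta_\nu\}$, a number determined entirely by $\JType(\theta_\nu, M) = \JType(\alpha_{k(\nu)}, M)$ via the rank formulas $\rk\{x^j, N\} = a_{j+1} + \cdots + a_p$ recorded in Section~\ref{sec:jordan}. For a Jordan type $[p]^{a_p}\cdots[1]^{a_1}$ of total dimension $d = \sum i\, a_i$, one has $\dim \Ker\{x^{p-1}\} = d - a_p$ (only the top blocks contribute to $\Im x^{p-1}$) and $\dim \Im\{x\} = d - \sum_{i\geq 1} a_i$, so the fiber rank of $\cH^{[1]}$ equals $\left(\sum_{i\geq 1} a_i\right) - a_p = \sum_{i=1}^{p-1} a_i$, i.e. the total number of Jordan blocks of size strictly less than $p$. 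Plugging in the two possible endotrivial types: $[p]^a[1]$ gives fiber rank $1$, and $[p]^a[p-1]$ gives fiber rank $1$ as well. So an endotrivial module always yields a line bundle $\cH^{[1]}(M)$ — that direction needs only the classification from \cite{CFP} plus bookkeeping.

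For the converse, suppose $M$ is of constant Jordan type with $\cH^{[1]}(M)$ a line bundle. Then $\sum_{i=1}^{p-1} a_i = 1$, so the Jordan type is $[p]^a[j]$ for a single $j$ with $1 \leq j \leq p-1$. I must rule out $2 \leq j \leq p-2$ and then, in the remaining cases $[p]^a[1]$ and $[p]^a[p-1]$, deduce that $M$ is actually endotrivial (constant Jordan type of this shape is a priori weaker than endotriviality). This is where the hypothesis that $G$ contains $\mathbb G_{a(1)} \times \mathbb G_{a(1)}$ or $\mathbb G_{a(2)}$ enters: restricting $M$ to such a subgroup scheme, whose variety $\bP$ is $\mathbb P^1$, one uses the explicit bundle theory there (as in \cite[\S 5]{FP12} and \cite{CFP}) to show that $\cH^{[1]}$ computed on $G$ restricts compatibly, and that $\cH^{[1]}(M)$ being a line bundle forces $\cH^{[1]}(M\downarrow)$ to be a line bundle on $\mathbb P^1$; then the rank-one sheaf $\cH^{[1]}$ together with its natural pairing (coming from the $\cO_{\bP(G)}$-module structure and the Frobenius-compatible multiplication) identifies $\End_k(M)$ in $\stmod G$ with a trivial module. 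The cleanest route is: endotriviality of $M$ is equivalent to $\End_k(M) = M \otimes M^\#$ having Jordan type $[p]^{\ast}[1]$ at every $\pi$-point (i.e. $M\otimes M^\#$ is, stably, $k$ plus a free module), and one translates "$\cH^{[1]}(M)$ is a line bundle" into exactly this statement using the tensor-product behavior of $\wt\Theta$ and the duality $\cH^{[1]}(M)^\vee \simeq \cH^{[1]}(M^\#)$.

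\textbf{Main obstacle.} The forward direction and the numerology are routine; the real work is the converse. Ruling out the middle Jordan types $[p]^a[j]$ with $2 \le j \le p-2$ requires genuinely using the presence of the subgroup scheme $\mathbb G_{a(1)}\times\mathbb G_{a(1)}$ or $\mathbb G_{a(2)}$ — without such a hypothesis the sheaf $\cH^{[1]}$ can be a line bundle for non-endotrivial modules — and then upgrading "constant Jordan type $[p]^a[1]$" to genuine endotriviality is the crux: one needs that the rank-one sheaf $\cH^{[1]}(M)$, being locally free of rank $1$ on $\bP(G)$, forces the stable endomorphism algebra of $M$ to be one-dimensional, which ultimately rests on the delicate compatibility between the global operator $\Theta$ on $kG\otimes k[V(G)]$ and tensor products. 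I expect this last step to consume most of the argument and to be where one invokes the $\mathbb P^1$-geometry of the distinguished subgroup scheme most heavily, following the template in \cite[5.17]{FP12}.
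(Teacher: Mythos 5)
The paper gives no proof here --- it cites \cite[5.17]{FP12} --- so I am comparing your proposal against the argument in \cite{FP12} and its input from \cite[\S 5]{CFP}. Your fiber-rank computation is correct and is exactly the right starting point: for a module of constant Jordan type $[p]^{a_p}\cdots[1]^{a_1}$ the sheaves $\cKer\wt\Theta^{p-1}_M$ and $\cIm\wt\Theta_M$ are subbundles of $\wt M$ with constant fiber dimensions, so $\cH^{[1]}(M)$ is a bundle of rank $\sum_{i=1}^{p-1}a_i$, the number of non-projective Jordan blocks. Combined with the classification of Jordan types of endotrivial modules from \cite[\S 5]{CFP}, this settles the forward implication, and it correctly reduces the converse to: stable constant Jordan type a single block $[j]$ implies $M$ endotrivial.

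The converse is where your proposal has a genuine gap, in two places. First, ruling out $2\le j\le p-2$: the actual input is the module-theoretic theorem of \cite[\S 5]{CFP} that for a group scheme containing $\mathbb G_{a(1)}\times\mathbb G_{a(1)}$ or $\mathbb G_{a(2)}$, a module of constant stable Jordan type $[j]$ must have $j=1$ or $j=p-1$; this is proved by explicit analysis of $k[x,y]/(x^p,y^p)$-modules (restriction to the rank-two subalgebra and comparison of Jordan types of $\theta_{[a:b]}$ as $[a:b]$ varies over $\bP^1$), not by any ``bundle theory on $\bP^1$.'' You name the restriction step but supply no argument for why the middle $j$ are impossible, and this is a real theorem, not bookkeeping. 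Second, upgrading type $[p]^a[1]$ (or $[p]^a[p-1]$, which reduces to it by applying $\Omega$) to endotriviality: your proposed mechanism via a ``natural pairing'' on the rank-one sheaf $\cH^{[1]}(M)$ is not how this goes and I do not see how to make it work, since $\cH^{[1]}$ is not a tensor functor. The standard argument is elementary and does not use the sheaf at all: $\dim M\equiv \pm 1\pmod p$, so the composite $k\to\End_k(M)\to k$ of the unit and trace maps is multiplication by $\dim M\ne 0$ and $k$ splits off $\End_k(M)$; one then checks that the complement is free along every $\pi$-point (this requires the results of \cite{FPS}/\cite{CFP} controlling the local Jordan type of a tensor product, since restriction along a $\pi$-point is not monoidal) and invokes the Dade-type projectivity criterion of Section~\ref{sec:jordan}. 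As written, your proof establishes one direction and an honest reduction, but leaves both pillars of the converse unproved and points at the wrong tool for the second.
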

In particular,  $\cH^{[1]}(-)$ induces a map from the group of endotrivial modules to the Picard group of $\bP(G)$. 
In \cite{Bal2}, Balmer constructs a map in the opposite direction (after inverting $p$) for $G$ a finite group. The  
connection between these two maps  is yet to be understood. 

\vspace{0.1in}
We give examples of some explicit calculations of bundles corresponding to $\fsl_2$-modules.  The nullcone $\cN(\fsl_2)$ is a quadric in $\bA^3$ defined by the equation $z^2+xy=0$. Hence, there is an isomorphism $i: \bP(\fsl_2) \simeq \bP^1$ (see, for example, \cite[5.8.1]{FP12}). In the calculations below we identify $\bP(\fsl_2)$ with $\bP^1$ via this isomorphism which allows us to use the standard notation for line bundles on $\bP^1$. 

Recall that  the representation theory of $\fu(\fsl_2)$ is tame.  
The complete list of indecomposable $\fsl_2$-modules consists of the following four families (see, for example, \cite{Pre2}), with some overlap: 
\begin{enumerate} 
\item  Weyl modules $V(\lambda)$;
\item  Induced modules $\HHH^0(\lambda) \simeq V(\lambda)^\#$;  
\item  Modules $\Phi_\xi(\lambda)$ for $\xi \in \bP^1_k$. These modules do not generally have rational $\SL_2$ structure and do not have constant Jordan type.  The projectivized support variety  of $\Phi_\xi(\lambda)$ contains one point: $\xi$.
\item Projective indecomposable modules $Q(\lambda)$ for $0 \leq \lambda \leq p-1$, with $Q(p-1) = V(p-1)$  being the irreducible Steinberg module. 
\end{enumerate}  It is shown in \cite{St} that for any finite dimensional restricted $\fsl_2$-module $M$, $\cKer \wt \Theta_M$ is a vector bundle, whether $M$ has constant rank or not.  
\begin{proposition}[\cite{St}] 
Let $\lambda = rp + a$, $ 0 \leq a \leq p-1$, $r \geq 0$    
\begin{enumerate}
\item $\cKer \wt \Theta_{V(\lambda)} \simeq \cO_{\bP^1}(-\lambda) \oplus \cO_{\bP^1}(a+2-p)^{\oplus r}$;
\item $\cKer \wt \Theta_{\HHH^0(\lambda)} \simeq \cO_{\bP^1}(-a)^{\oplus r+1}$;
\item $\cKer \wt \Theta_{\Phi_\xi(\lambda)} \simeq \cO_{\bP^1}(a+2-p)^{\oplus r}$;
\item $\cKer \wt \Theta_{Q(a)} \simeq \cO_{\bP^1}(-\lambda) \oplus \cO_{\bP^1}(a+2-2p)$.
\end{enumerate} 

\end{proposition}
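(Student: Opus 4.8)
The plan is to reduce everything to the geometry of $\bP^1$ together with the known module structures. The key structural input is that $\bP(\fsl_2)\cong\bP^1$ is a smooth projective curve: since $\cKer\wt\Theta_M$ is by construction a subsheaf of the trivial bundle $\wt M=M\otimes\cO_{\bP^1}$, it is torsion-free, hence automatically locally free on the curve $\bP^1$ --- this is precisely why one obtains a vector bundle here with no constant-rank hypothesis, in contrast to the higher-dimensional situation of Theorem~\ref{thm:bundles} --- and by Grothendieck's splitting theorem $\cKer\wt\Theta_M\simeq\bigoplus_i\cO_{\bP^1}(n_i)$. So the entire problem is to determine the rank and the integers $n_i$. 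The rank is $\dim M-\max_\nu\rk\theta_\nu$, which for each family is read off from the generic Jordan type: constant type $[p]^r[a+1]$ for the Weyl module and its dual, $[p]^2$ for the projective $Q(a)$, and generically $[p]^r$ for $\Phi_\xi(\lambda)$ (which is free away from its one-point support $\{\xi\}$).

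For $V(\lambda)$, $\HHH^0(\lambda)$ and $Q(\lambda)$, which carry a rational $\SL_2$-structure, I would pin down the $n_i$ by equivariance. The element $\Theta\in\fu(\fsl_2)\otimes k[\cN]$ is $\SL_2$-invariant, being the image of $\id_{\fsl_2}$ under $\fsl_2\otimes\fsl_2^\#\to\fsl_2\otimes k[\cN]$, so $\wt\Theta_M$ and hence $\cKer\wt\Theta_M$ are $\SL_2$-equivariant on $\bP^1\cong\SL_2/B$ (with $\bP(\fsl_2)=\Proj k[\cN]$ the smooth conic, under whose isomorphism with $\bP^1$ one has $\cO_{\bP(\fsl_2)}(1)\mapsto\cO_{\bP^1}(2)$). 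An $\SL_2$-equivariant bundle on $\bP^1$ is the sum of the line bundles $\cO_{\bP^1}(n)$ prescribed by the torus weights of its fibre over the $B$-fixed point $[e]$; for the sub-bundle $\cKer\wt\Theta_M\subset\wt M$ that fibre is $\Ker(e\colon M\to M)$, and the rational $\SL_2$-structure is exactly what lifts its a priori mod-$p$ weights to honest integer $\bG_m$-weights. Thus the computation reduces to the classical task of decomposing $\Ker(e\mid M)$ as a $\bG_m$-module from the known $\SL_2$-module structures: that of the Weyl module $V(\lambda)$, that of $\HHH^0(\lambda)=V(\lambda)^\#$ (which one may also treat via the duality $(\cKer\wt\Theta^j_M)^\vee\simeq\cCoker\wt\Theta^j_{M^\#}$ of \cite[5.5]{FP12}), and that of the projective cover $Q(\lambda)$ via its $\Delta$-filtration. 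Normalising the weight-to-twist dictionary by the base case $V(\lambda)=L(\lambda)$, $\lambda\le p-1$ --- where $\Ker(e)=kv_\lambda$ forces $\cO_{\bP^1}(-\lambda)$ --- then yields, after this bookkeeping, the formulas (1), (2) and (4).

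The family $\Phi_\xi(\lambda)$ has no rational $\SL_2$-structure, so equivariance is unavailable; here I would use the explicit presentation of these modules from the tame classification of $\fu(\fsl_2)$ in \cite{Pre2} and compute directly. Restricting $\Theta$ along the parametrisation $[s:t]\mapsto s^2e+st\,h-t^2f$ of the nullcone gives an explicit matrix of quadratic forms in $s,t$, and one determines the splitting type of its kernel sheaf either by column reduction over $k[s,t]$ on the two standard affine charts or by computing $\dim\HHH^0(\bP^1,\cKer\wt\Theta_{\Phi_\xi(\lambda)}(n))$ for enough $n$; the fact that $\cKer\wt\Theta_{\Phi_\xi(\lambda)}$ is trivial on $\bP^1\setminus\{\xi\}\cong\bA^1$, together with a semicontinuity constraint at the single point $\xi$, should force it into the uniform shape $\cO_{\bP^1}(a+2-p)^{\oplus r}$. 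The main obstacle throughout is representation-theoretic rather than geometric: identifying the integer torus weights of $\Ker(e\mid M)$ precisely for the (co)Weyl and projective modules, and carrying out the non-equivariant computation for $\Phi_\xi(\lambda)$. Once those module-theoretic inputs are secured, the sheaf-theoretic conclusions are essentially forced.
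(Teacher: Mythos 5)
Your geometric reductions are all sound, and they are indeed what makes the computation feasible: on the smooth curve $\bP(\fsl_2)\simeq\bP^1$ the subsheaf $\cKer\wt\Theta_M\subset\wt M$ is torsion-free and hence locally free with no constant-rank hypothesis, Grothendieck's theorem reduces everything to a splitting type, the rank is the generic kernel dimension read off from the Jordan type, and the conic identification does send $\cO_{\bP(\fsl_2)}(1)$ to $\cO_{\bP^1}(2)$. Note also that the survey gives no proof of this proposition at all --- it quotes the result from Stark's preprint \cite{St}, where the splitting types are obtained by direct computation with explicit matrices over the homogeneous coordinate ring of the conic --- so your equivariant strategy is genuinely different from the source.

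The difficulty is that the central step of that strategy is false. An $\SL_2$-equivariant bundle on $\bP^1=\SL_2/B$ corresponds to the rational $B$-module given by its fibre at the $B$-fixed point, and it splits as the sum of the line bundles attached to the $\bG_m$-weights of that fibre \emph{only when that $B$-module is a direct sum of characters}; in general the unipotent radical acts, and the splitting type is not determined by the torus weights (e.g.\ $M\otimes\cO_{\bP^1}$ is trivial for any rational $\SL_2$-module $M$, whatever the weights of $M$). Here the fibre is $\Ker(e\mid M)$, which is killed by $e^{(i)}$ for $i<p$ but not by $e^{(p)}$: for $V(\lambda)$, $\lambda=rp+a$, the kernel is spanned by $v_0$ and $v_{a+1+jp}$, $0\le j\le r-1$, and $e^{(p)}v_{a+1+jp}=(r-j)\,v_{a+1+(j-1)p}$, so for $r\ge2$ the $B$-module is a nonsplit extension. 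Your recipe, applied to the weights $\lambda$ and $(r-2j)p-a-2$, outputs $\cO_{\bP^1}(-\lambda)\oplus\bigoplus_{j=0}^{r-1}\cO_{\bP^1}(a+2-(r-2j)p)$, which already for $p=3$, $\lambda=7$ gives $\cO(-7)\oplus\cO(-3)\oplus\cO(3)$ instead of the correct $\cO(-7)\oplus\cO(0)^{\oplus2}$ --- same degree, different splitting type, which is precisely the information the torus weights cannot see. (That the recipe happens to work for $r\le1$, $a\le p-2$ is because the relevant extension group of $B$-modules vanishes there.) To repair the argument you must identify the full $B$-module structure of $\Ker(e\mid M)$ --- for $V(\lambda)$ it is $k_{\lambda}\oplus\bigl(V(r-1)^{[1]}\otimes k_{p-a-2}\bigr)\big|_B$, whose associated bundle is $\cO(-\lambda)\oplus\cO(a+2-p)^{\oplus r}$ because the Frobenius-twist factor induces a trivial bundle --- at which point the proof is no longer the weight bookkeeping you describe. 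The same caveat applies to $Q(a)$, and the $\Phi_\xi(\lambda)$ case remains an unexecuted direct computation.
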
 

\begin{remark}\label{finite} 
The construction of the global operator $\wt \Theta$ uses properties of infinitesimal group schemes in an essential way. 
We can transfer the resulting constructions of sheaves and algebraic vector bundles to the variety $\Proj \HHH^\bu(G,k)$ via the isogeny 
$\Psi$  of Theorem~\ref{thm:inf} but the structure of the variety of  one-parameter subgroups $V(G)$ is crucial for the very definition of $\Theta$. It is an open question whether an analogous construction exists for finite groups (apart from elementary abelian which are really just restricted Lie algebras in disguise).
\end{remark}

\section{Elementary subalgebras of restricted Lie algebras}
\label{sec:elem}
In this section we consider only infinitesimal group schemes of height one or, equivalently, restricted Lie algebras. As usual, all considerations apply to an elementary abelian $p$-group $E$ of rank $r$ once we choose an isomorphism $kE \simeq \fu(\fg_a^{\oplus r})$. 
\begin{definition} A subalgebra $\epsilon$ of a restricted Lie algebra $\fg$ is called {\it elementary} if $\epsilon$ is an abelian Lie algebra with trivial $[p]^{\rm th}$ power.  Equivalently, $\epsilon \simeq \fg_a^{\oplus r}$ for some integer $r>0$. We define 
\[ \bE(r,\fg) = \{ \epsilon \subset \fg \, | \, \epsilon\text{ elementary subalgebra of dimension } r \}. \]  
\end{definition}  
\noindent
Recall that for $\epsilon\simeq\fg_a^{\oplus r}$, we have 
$$\fu(\epsilon) \simeq k[x_1, \ldots, x_r]/(x_1^p, \ldots, x_r^p).$$ 
\noindent
For a vector space $V$ of dimension $n \geq r$, we denote by $\Grass(r, V)$ the Grassmannian of $r$-dimensional linear subspaces of $V$. 
\begin{proposition} Let $\fg$ be a restricted Lie algebra.  For any $r \leq \dim \fg$, there exists a closed embedding $\bE(r, \fg) \hookrightarrow \Grass(r, \fg)$  which gives $\bE(r, \fg)$ a structure  of a projective algebraic variety. 
\end{proposition}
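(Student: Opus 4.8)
The plan is to exhibit $\bE(r,\fg)$ as the zero locus of explicit equations on $\Grass(r,\fg)$, so that it is automatically a closed subscheme and hence, with the reduced structure, a projective algebraic variety. The conditions defining an elementary subalgebra $\epsilon$ of dimension $r$ are: (i) $\epsilon$ is closed under the Lie bracket; (ii) the bracket restricted to $\epsilon$ is identically zero (abelian); (iii) the restricted power map $(-)^{[p]}$ vanishes on $\epsilon$. Condition (ii) implies (i), so only (ii) and (iii) are needed. The strategy is to phrase each of (ii) and (iii) as the vanishing of a section of a suitable vector bundle on $\Grass(r,\fg)$ built from the tautological rank-$r$ subbundle.

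First I would set up notation: let $\gamma \subset \fg \otimes \cO_{\Grass(r,\fg)}$ be the tautological subbundle, whose fibre at a point $[\epsilon]$ is $\epsilon$ itself. The Lie bracket on $\fg$ is a linear map $\Lambda^2 \fg \to \fg$, which induces a bundle map $\Lambda^2 \gamma \to \fg \otimes \cO$; composing with the projection $\fg \otimes \cO \to (\fg \otimes \cO)/\gamma$ gives a section whose vanishing is exactly condition (i), and composing instead with nothing (i.e. landing in $\fg\otimes\cO$) and then quotienting appropriately — actually, for the abelian condition (ii) one wants the bracket $\Lambda^2\gamma \to \fg\otimes\cO$ itself to be zero, i.e. a section of $\Hom(\Lambda^2\gamma, \fg\otimes\cO)$; its vanishing locus is closed. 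Condition (iii) is subtler because $x \mapsto x^{[p]}$ is not linear; however, on a subspace $\epsilon$ on which the bracket already vanishes, axiom (3) of the definition of a restricted Lie algebra shows that all the correction terms $s_i(x,y)$ (which are iterated brackets) vanish, so $(-)^{[p]}$ restricted to $\epsilon$ becomes additive, and by axiom (2) it is $p$-semilinear. Thus on the closed subscheme cut out by (ii), the map $\gamma \to \fg\otimes\cO$, $x \mapsto x^{[p]}$, is a morphism of sheaves after a Frobenius twist, i.e. it is $\cO$-linear as a map $\gamma^{(1)} \to \fg \otimes \cO$, and its vanishing is again a closed condition. Intersecting the two closed loci gives $\bE(r,\fg)$ as a closed subscheme of $\Grass(r,\fg)$, which is projective; taking the underlying reduced structure yields a projective algebraic variety, and the closed embedding $\bE(r,\fg) \hookrightarrow \Grass(r,\fg)$ is the inclusion of this subvariety.

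The key steps in order are: (1) recall the tautological subbundle $\gamma$ on $\Grass(r,\fg)$ and note $\Grass(r,\fg)$ is projective; (2) realize the abelian condition as the vanishing of the section of $\Hom(\Lambda^2\gamma,\, \fg\otimes\cO)$ induced by the bracket, giving a closed subscheme $Z_1$; (3) over $Z_1$, use axioms (2) and (3) of the restricted structure to check that $x\mapsto x^{[p]}$ is $p$-semilinear on $\gamma|_{Z_1}$, hence defines an $\cO$-linear bundle map $(\gamma|_{Z_1})^{(1)} \to \fg\otimes\cO$, whose vanishing locus $Z_2 \subset Z_1$ is closed; (4) identify $\bE(r,\fg)$ set-theoretically (in fact scheme-theoretically on closed points) with $Z_2$, observe it is closed in the projective variety $\Grass(r,\fg)$, and equip it with the reduced induced structure. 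The main obstacle is step (3): handling the non-linearity of the $p$-power map cleanly. One must be careful that the reduction to a linear (semilinear) map genuinely uses that the bracket vanishes on each fibre — i.e. it works only after restricting to $Z_1$, not on all of $\Grass(r,\fg)$ — and one should note that the Frobenius twist $\gamma^{(1)}$ is still a vector bundle of the same rank, so its zero locus behaves well. A minor additional point is to confirm that $\bE(r,\fg)$ is nonempty exactly when $\cN_p(\fg)$ has dimension at least $r$ (or whatever the relevant bound is), but nonemptiness is not part of the claimed statement, so it can be omitted or mentioned in passing.
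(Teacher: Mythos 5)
Your argument is correct, and it is in substance the same as the proof in the cited source \cite{CFP3} (the survey states the proposition without proof): there the two conditions are verified to be polynomial on the standard affine charts of $\Grass(r,\fg)$, and the reduction of the non-linear condition $x^{[p]}=0$ from a canonical basis to the whole subspace uses exactly the $p$-semilinearity of $(-)^{[p]}$ on an abelian subalgebra that you isolate as your step (3). Your phrasing via the tautological subbundle $\gamma$, the section of $\Hom(\Lambda^2\gamma,\fg\otimes\cO)$, and the Frobenius-twisted map $\gamma^{(1)}\to\fg\otimes\cO$ over the abelian locus is a coordinate-free version of the same argument.
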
 
Except for some special cases, the varieties $\bE(r, \fg)$ are quite mysterious. In the case $r=1$ they are familiar objects though: we get projectivizations of restricted nullcones. Hence, $\bE(r,\fg)$ can be viewed as a natural generalization of the rank variety of $\fg$.  Constructions of rank varieties and generalized rank varieties for modules as well as of the global $p$-nilpotent operator $\Theta$ and the associated sheaves all carry through for this new ambient variety $\bE(r,\fg)$.   Hence, we get a whole slew of new invariants of modular representations as well as new techniques for constructing algebraic vector bundles on interesting projective varieties.

\begin{example} Let $r=1$. Then $\bE(1, \fg) \simeq \Proj k[\cN_p]$. In particular, if $G$ is a reductive connected algebraic group and $\fg = \Lie G$, then $\bE(1, \fg)$ is irreducible (\cite{NPV}, \cite{UGA}).
\end{example} 

\begin{example}  Let $r=2$. Let $G$ be a connected reductive algebraic group, let $\fg = \Lie G$, and assume that $p$ is good for $G$. A result of  Premet (\cite{Pre2}) implies that $\bE(2, \fg)$ is equidimensional;  in the special case  $\fg = \gl_n$, 
$p \geq n$,  $\bE(2, \gl_n)$ is irreducible of dimension $n^2 -5$. 
\end{example}
Things get murky once $r \geq 3$. On the other hand, there is quite a bit we can say if we consider maximal elementary subalgebras of a given Lie algebra $\fg$. 
\begin{notation} Let $\fg$ be a restricted Lie algebra. Then 
\[\rk_{el}(\fg) = \max \{r \, | \, \text{there exists an elementary subalgebra } \epsilon \subset \fg \, : \, 
\dim \epsilon =r \}. \]
\end{notation} 
\noindent
For complex simple Lie algebras the dimension of a maximal 
abelian subalgebra was determined by Malcev in 1945 \cite{Mal51}; 
the general linear case was first considered by 
Schur at the turn of the last century \cite{Sch05}. We give some examples of calculations 
of $\rk_{el}(\fg)$ and the corresponding varieties $\bE(r, \fg)$ for restricted Lie algebras; more can be found  in \cite{CFP3}. 

We denote by $\Grass(r,n)$ the {\it Grassmannian} of $r$-planes in $n$-space, and by $\LG(n, n)$ the {\it Lagrangian Grassmannian} of isotropic subspaces of maximal dimension in a $2n$-dimensional symplectic space.  

\begin{theorem}\label{thm:sl}
 Let $\fg = \gl_n$ (or $\fsl_n$).   
\begin{enumerate} 
\item If $ n = 2m$, then $\rk_{el}(\fg) = m^2$, and
$$\bE(m^2, \fg) \simeq \Grass(m, 2m).$$
\item  If $n = 2m+1$, $m \geq 2$, then $\rk_{el}(\fg) = m(m+1)$, and
$$\bE(m(m+1), \fg) \simeq \Grass(m, 2m+1) \sqcup \Grass(m, 2m+1).$$	
\end{enumerate}  
\end{theorem}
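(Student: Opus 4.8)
The plan is to split the statement into a numerical part, the computation of $\rk_{el}(\gl_n)$, and a geometric part, the identification of the moduli variety $\bE(\rk_{el}(\gl_n),\gl_n)$; this is the content of \cite{CFP3}, and I only sketch the shape here. Observe first that an elementary subalgebra of $\gl_n$ is in particular a linear subspace consisting of pairwise commuting nilpotent matrices, i.e.\ an abelian subalgebra contained in the nullcone $\cN$, and that the extremal examples we shall produce consist of squarezero matrices, so the axiom $x^{[p]}=0$ imposes no constraint on the maximal dimension. For the upper bound I would invoke the classical theorem of Schur \cite{Sch05} (extended to arbitrary fields by Jacobson): a commutative subalgebra of $M_n(k)$ has dimension at most $\lfloor n^2/4\rfloor+1$. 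Since an abelian subalgebra $\epsilon\subseteq\cN$ does not contain the identity, $\epsilon\oplus k\cdot\id$ is commutative of dimension $\dim\epsilon+1$, so $\dim\epsilon\le\lfloor n^2/4\rfloor$, which is $m^2$ for $n=2m$ and $m(m+1)$ for $n=2m+1$. To see the bound is attained, attach to a linear subspace $W\subseteq V=k^n$ the subalgebra
\[\epsilon_W=\{x\in\gl_V\mid x(V)\subseteq W,\ x(W)=0\}\cong\Hom(V/W,W),\]
each of whose elements squares to zero, so $\epsilon_W$ is elementary of dimension $\dim W\cdot(n-\dim W)$; taking $\dim W=\lfloor n/2\rfloor$ realizes $\lfloor n^2/4\rfloor$. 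As the $\epsilon_W$ are traceless, $\rk_{el}(\fsl_n)=\rk_{el}(\gl_n)$ and the two moduli varieties agree, so it suffices to argue for $\gl_n$.

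Next I would set up the classifying morphism. The assignment $W\mapsto\epsilon_W$ depends algebraically on $W$, so it defines a morphism from $\Grass(\lfloor n/2\rfloor,V)$ (resp.\ $\Grass(\lceil n/2\rceil,V)$) to $\Grass(\rk_{el}(\gl_n),\gl_n)$ whose image lies in the closed subvariety $\bE(\rk_{el}(\gl_n),\gl_n)$. This morphism is injective because $W$ is recovered from $\epsilon_W$ as $\sum_{x\in\epsilon_W}\Im(x)=\bigcap_{x\in\epsilon_W}\Ker(x)=W$. Equivalently, $\GL_n$ acts on the standard subalgebras with the stabilizer of $\epsilon_W$ equal to the maximal parabolic fixing $W$, whose associated flag variety is exactly the relevant Grassmannian; proving the theorem amounts to showing this orbit is all of $\bE(\rk_{el}(\gl_n),\gl_n)$.

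The essential point is therefore surjectivity: every elementary subalgebra $\epsilon\subseteq\gl_n$ of dimension $\rk_{el}(\gl_n)$ is $\GL_n$-conjugate to some $\epsilon_W$. The key sub-claim is that such a maximal $\epsilon$ must consist of squarezero matrices. Granting this, set $W:=\epsilon(V)=\sum_{x\in\epsilon}\Im(x)$; then $\epsilon(W)=\epsilon^2(V)=0$, so $\epsilon\subseteq\epsilon_W$, and the chain $\rk_{el}(\gl_n)=\dim\epsilon\le\dim\epsilon_W=\dim W\cdot(n-\dim W)\le\lfloor n^2/4\rfloor=\rk_{el}(\gl_n)$ forces equality throughout, whence $\dim W\in\{\lfloor n/2\rfloor,\lceil n/2\rceil\}$ and $\epsilon=\epsilon_W$. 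Establishing the squarezero claim is where the real work lies and is the main obstacle I expect: one must rule out abelian nilpotent algebras of operators of the extremal dimension having elements with nonzero square, which needs the finer structure theory of maximal commutative nilpotent matrix algebras (a refinement of Schur's and Jacobson's results, as worked out in \cite{CFP3}), and this is precisely the step that has no short proof.

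Finally I would record the even/odd dichotomy. For $n=2m$ one has $\lfloor n/2\rfloor=\lceil n/2\rceil=m$, so a single family appears and $\bE(m^2,\gl_{2m})\simeq\Grass(m,2m)$. For $n=2m+1$ the bound $m(m+1)$ is attained both by $\epsilon_W$ with $\dim W=m$ and by $\epsilon_{W'}$ with $\dim W'=m+1$, giving two families parametrized by $\Grass(m,2m+1)$ and $\Grass(m+1,2m+1)\cong\Grass(m,2m+1)$. By the previous paragraph these exhaust the maximal elementary subalgebras, and they are disjoint inside $\bE(m(m+1),\gl_{2m+1})$ because the invariant $\dim\bigl(\sum_{x\in\epsilon}\Im(x)\bigr)$ equals $m$ on the first family and $m+1$ on the second, and for $m\ge2$ these two values are distinct (the case $m=1$, $n=3$, behaves differently and is excluded). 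This yields $\bE(m(m+1),\gl_{2m+1})\simeq\Grass(m,2m+1)\sqcup\Grass(m,2m+1)$, completing the proof, with the identical conclusions for $\fsl_n$ by the first paragraph.
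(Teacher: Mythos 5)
The survey states this theorem without proof, citing \cite{CFP3}, and your outline --- the Schur--Jacobson bound $\lfloor n^2/4\rfloor$ for a commuting nilpotent linear subspace, the models $\epsilon_W\cong\Hom(V/W,W)$ realizing it, and the reduction of surjectivity to the classification of commutative nilpotent subalgebras of extremal dimension (which is indeed the hard input, and the source of the excluded case $n=3$) --- is the same strategy followed there. Two points are worth tightening: the step $\epsilon(W)=\epsilon^2(V)=0$ does not follow from "every element squares to zero" alone but needs commutativity and $p\neq 2$ (the claim one actually extracts from the classification is the stronger $\epsilon\cdot\epsilon=0$, i.e.\ that a maximal $\epsilon$ is already the extremal associative algebra $\Hom(V/W,W)$), and in characteristic $p$ the bijective orbit map $\Grass(m,2m)\simeq\GL_n/P\to\bE(m^2,\fg)$ requires an extra argument to be an isomorphism of varieties rather than merely a bijective morphism.
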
 

\begin{theorem}\label{thm:sp}
For $\fg = \fsp_{2n}$, $\rk_{el}(\fsp_{2n}) = \frac{n(n+1)}{2}$, and
\[\bE\left(\frac{n(n+1)}{2}, \fsp_{2n}\right) \simeq \LG(n, n). \]
\end{theorem}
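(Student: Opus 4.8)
The plan is to exhibit the isomorphism concretely via the Siegel parabolic, in the spirit of Theorem~\ref{thm:sl}. Write $V = k^{2n}$ for the symplectic space, so $\fsp_{2n} = \fsp(V)$, and for a Lagrangian subspace $L \subset V$ put
$$\epsilon_L := \{\, x \in \fsp(V) \mid x(V) \subseteq L \text{ and } x(L) = 0 \,\}.$$
First I would record the construction and the lower bound. If $x, y \in \epsilon_L$ then $xy = 0$ because $\Im x \subseteq L \subseteq \Ker y$; hence $\epsilon_L$ is abelian and every $x \in \epsilon_L$ has $x^2 = 0$, so $x^{[p]} = x^p = 0$ and $\epsilon_L$ is elementary. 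Any $x \in \epsilon_L$ factors as $\bar x \colon V/L \to L$, and since $L$ is Lagrangian the form identifies $V/L \cong L^\#$, so $\bar x \in \Hom(L^\#, L) \cong L \otimes L$; the condition $x \in \fsp(V)$ is exactly that $\bar x$ be symmetric, whence $\epsilon_L \cong S^2 L$ and $\dim \epsilon_L = \binom{n+1}{2} = \frac{n(n+1)}{2}$. Thus $\rk_{el}(\fsp_{2n}) \geq \frac{n(n+1)}{2}$. Moreover $\epsilon_L$ is precisely the nilradical of the Siegel parabolic $\mathfrak p_L = \Lie(\Sp(V)_L)$, so $L \mapsto \epsilon_L$ is $\Sp_{2n}$-equivariant; it is a morphism $\LG(n,n) \to \Grass\!\left(\frac{n(n+1)}{2}, \fsp_{2n}\right)$ (the subalgebra $\epsilon_L$ depends algebraically on $L$), and it lands in the closed subvariety $\bE\!\left(\frac{n(n+1)}{2}, \fsp_{2n}\right)$ by the first point.

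Next I would prove the matching upper bound together with a rigidity statement. Let $\epsilon \subseteq \fsp(V)$ be any elementary subalgebra; its elements are pairwise commuting nilpotent operators on $V$. By Engel's theorem they have a common kernel vector, spanning an $\epsilon$-stable isotropic line $V_1$; then $V_1^\perp$ is $\epsilon$-stable, and passing to the symplectic space $V_1^\perp / V_1$ and iterating produces a complete $\epsilon$-stable self-dual flag $V_1 \subset \cdots \subset V_{2n} = V$ with $V_{2n-i} = V_i^\perp$. In a basis adapted to this flag $\epsilon$ is simultaneously strictly upper triangular, so $\epsilon$ lies in the nilradical $\mathfrak n$ of the corresponding Borel $\mathfrak b$ of $\fsp_{2n}$. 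Now I invoke the determination of the maximal dimension of an abelian subalgebra of $\fsp_{2n}$: over $\mathbb{C}$ this is Malcev's theorem \cite{Mal51}, giving the value $\frac{n(n+1)}{2}$, attained up to conjugacy only by the Siegel radical; the characteristic-$p$ analogue is established in \cite{CFP3} (one may also bound $\dim \mathfrak a$ for an abelian $\mathfrak a \subseteq \mathfrak n$ directly, using the root-space decomposition of $\mathfrak n$). Since $\epsilon$ is abelian this yields $\dim \epsilon \leq \frac{n(n+1)}{2}$, hence $\rk_{el}(\fsp_{2n}) = \frac{n(n+1)}{2}$, and every elementary subalgebra of that dimension is $\Sp(V)$-conjugate to the Siegel radical, i.e. equals $\epsilon_L$ for some $L \in \LG(n,n)$.

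It then remains to see that $L \mapsto \epsilon_L$ is an isomorphism of varieties onto $\bE\!\left(\frac{n(n+1)}{2}, \fsp_{2n}\right)$. Surjectivity is the last point of the previous paragraph. For injectivity, $L$ is recovered from $\epsilon_L$ as $\sum_{x \in \epsilon_L} \Im x$: the rank-one symmetric tensors $v \otimes v \in S^2 L \cong \epsilon_L$ have images spanning $L$. So the map is a bijective $\Sp_{2n}$-equivariant morphism from the smooth projective variety $\LG(n,n)$ onto a single $\Sp_{2n}$-orbit, hence an isomorphism (equivalently, the inverse $\epsilon \mapsto \sum_{x \in \epsilon} \Im x$ is again a morphism on this locus, where $\dim \sum_{x\in\epsilon} \Im x$ is constant equal to $n$). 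The main obstacle is the second paragraph: controlling \emph{all} elementary subalgebras in arbitrary characteristic — in particular ruling out in small $p$ abelian nil subalgebras larger than $S^2 L$ and establishing the conjugacy of all maximal ones to the Siegel radical — since Malcev's complex argument must be replaced by a characteristic-free one, and $p = 2$, where $\fsp_{2n}$ behaves exceptionally, needs separate attention.
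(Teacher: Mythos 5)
The survey states this theorem as a result of \cite{CFP3} and gives no proof, so your proposal can only be measured against that source. Your first paragraph is exactly the construction used there: $\epsilon_L$ is the nilradical of the Siegel parabolic, the identification $\epsilon_L \simeq S^2L$ via $V/L \simeq L^\#$ is correct, and $L \mapsto \epsilon_L$ is the right candidate morphism $\LG(n,n) \to \bE\bigl(\tfrac{n(n+1)}{2}, \fsp_{2n}\bigr)$. The Engel-type reduction placing an arbitrary elementary subalgebra inside the nilradical of a Borel of $\fsp_{2n}$ is also sound.

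The genuine gap is the entire second paragraph, which is where the content of the theorem lives. You need two facts in characteristic $p$: (i) every abelian subalgebra of the nilradical $\mathfrak n$ has dimension at most $\tfrac{n(n+1)}{2}$, and (ii) every elementary subalgebra attaining this bound \emph{is} some $\epsilon_L$ (not merely conjugate to one abstractly — you need this pointwise to get surjectivity of your map). Citing \cite{Mal51} only covers characteristic $0$; citing \cite{CFP3} is circular, since that is the paper the theorem comes from; and the parenthetical ``bound $\dim\mathfrak a$ directly using the root-space decomposition'' is not a proof, because an abelian subalgebra of $\mathfrak n$ need not be a sum of root spaces — one must first degenerate it to a homogeneous one (e.g. by passing to an associated graded with respect to a height filtration) and then classify maximal abelian sets of roots in type $C_n$, with care about structure constants and about small $p$ (the source in fact assumes $p$ odd here, and the survey's statement suppresses this; compare \cite{Bar}). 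A secondary but real issue: ``a bijective equivariant morphism between smooth projective varieties is an isomorphism'' is false in characteristic $p$ (Frobenius); your parenthetical remark that $\epsilon \mapsto \sum_{x\in\epsilon}\Im x$ is a morphism on the locus of constant rank, inverse to $L \mapsto \epsilon_L$, is the correct argument and should carry the weight rather than appear as an aside.
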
 

\subsection*{Modules of constant $(r,j)$ rank and vector bundles on $\bE(r, \fg)$}
For an elementary subalgebra $\epsilon \subset \fg$, and a $\fg$-module $M$, we denote by $M{\downarrow_{\epsilon}}$ the restriction of $M$ to $\epsilon$. 
Then $\Rad^j(M{\downarrow_{\epsilon}})$  is the $j^{\rm th}$ radical of $M$ as an $\fu(\epsilon) \simeq k[x_1, \ldots, x_r]/(x_1^p, \ldots, x_r^p)$-module, and  
$\Soc^j(M{\downarrow_{\epsilon}})$ is the $j^{\rm th}$ socle. 

\begin{definition} A restricted $\fg$-module $M$ is a module of {\it constant $(r,j)$-radical rank} if the dimension of $\Rad^j(M{\downarrow_{\epsilon}})$ is independent of $\epsilon \in \bE(r, \fg)$. 

We say that $M$ is a module of {\it constant $(r,j)$-socle rank} if the dimension of $\Soc^j(M{\downarrow_{\epsilon}})$ is independent of $\epsilon \in \bE(r, \fg)$.
\end{definition}

For $r=1$ the notions of constant $j$-radical and $j$-socle rank are equivalent. Moreover, they are both equivalent to the notion of constant $j$-rank. 
For $r>1$, the properties of constant $(r,j)$-radical and socle rank are independent of each other; one can find examples of this phenomenon in \cite{CFP2}.  
An analogue of modules of constant Jordan type for $r>1$ is given in the following definition: 
\begin{definition} 
A restricted $\fg$-module $M$ is a module of {\it constant $r$-radical type} if it is a module of constant $(r,j)$-radical rank for any integer $j > 0$. 

Similarly, 
$M$ is a module of {\it constant $r$-socle type} if it is a module of constant $(r,j)$-socle rank for any integer $j > 0$.
\end{definition}
Once again, for $r=1$ both notions are equivalent to constant Jordan type.  For $r>1$, properties of constant radical type and constant socle type  are independent. There are several constructions of modules of constant $(r,j)$-radical or socle rank for various combinations of $r$ and $j$ in \cite{CFP2}. Here, we give one family of examples which involves Carlson modules. 

For a positive degree cohomology class $\zeta \in \HHH^{m}(G,k)$,  the {\it Carlson module} $L_\zeta$ is the kernel of the map $\hat \zeta : \Omega^{m} k \to k$ that corresponds to $\zeta$ under the isomorphism $\HHH^m(G,k) \simeq \Hom_{\stmod G}(\Omega^m k ,k)$.    

\begin{theorem}[5.5 \cite{CFP2}] Let $E$ be an elementary abelian $p$-group of rank $n$, and let $\zeta \in \HHH^m(E,k)$ be a non-nilpotent positive dimensional cohomology class. If the hypersurface $Z(\zeta) \subset \Proj \HHH^\bu(E,k) \simeq \bP^{n-1}$ defined by the equation $\zeta = 0$  does not contain  a linear subspace of dimension $r-1$ then $L_\zeta$ has  constant $r$-radical  type. 
\end{theorem}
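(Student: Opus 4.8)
\emph{Proof plan.} Since $\fg_a^{\oplus n}$ is abelian with trivial $p$-th power operation, every $r$-dimensional subspace of it is an elementary subalgebra, so $\bE(r,\fg_a^{\oplus n})=\Grass(r,n)$; writing $V=\langle x_1,\dots,x_n\rangle$ for the span of the standard generators of $kE\cong\fu(\fg_a^{\oplus n})$, a point of $\Grass(r,n)$ is just an $r$-dimensional subspace $\epsilon\subset V$. We must show that $\dim\Rad^j(L_\zeta{\downarrow_\epsilon})$ is the same for all $\epsilon$ and all $j\ge1$. If $r=1$ the hypothesis says $Z(\zeta)$ contains no point of $\bP^{n-1}$, which for a positive-degree non-nilpotent class can only happen when $n=1$, where $L_\zeta$ is projective and there is nothing to prove; so I assume $r\ge2$.

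The starting point is the defining sequence $0\to L_\zeta\to\Omega^m_E k\to k\to 0$ of $kE$-modules, representing the triangle $L_\zeta\to\Omega^m_E k\xrightarrow{\hat\zeta}k$ in $\stmod kE$. First I restrict along the Hopf subalgebra $\fu(\epsilon)\hookrightarrow kE$: as $kE$ is free over $\fu(\epsilon)$, a minimal resolution of $k$ over $kE$ restricts to a resolution by projective (hence, $\fu(\epsilon)$ being Frobenius, split) modules, so $(\Omega^m_E k){\downarrow_\epsilon}\cong\Omega^m_\epsilon k$ in $\stmod\fu(\epsilon)$ and $\hat\zeta{\downarrow_\epsilon}$ corresponds to $\res_{\epsilon,E}\zeta\in\HHH^m(\epsilon,k)$ (written $\res_\epsilon\zeta$ for short). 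Next I translate the hypothesis: the map induced by $\res_{\epsilon,E}$ on $\Proj$'s is the linear embedding $\bP(\epsilon)\hookrightarrow\bP^{n-1}\simeq\Proj\HHH^\bu(E,k)$, so $Z(\res_\epsilon\zeta)=Z(\zeta)\cap\bP(\epsilon)$ inside $\bP(\epsilon)\simeq\bP^{r-1}$, and --- since $\HHH^\bu(\epsilon,k)$ is reduced --- $\res_\epsilon\zeta$ is non-nilpotent if and only if $\bP(\epsilon)\not\subseteq Z(\zeta)$. Thus the hypothesis is precisely the statement that $\res_\epsilon\zeta$ is non-nilpotent for \emph{every} $\epsilon\in\Grass(r,n)$. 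Consequently the restricted triangle is $L_\zeta{\downarrow_\epsilon}\to\Omega^m_\epsilon k\xrightarrow{\widehat{\res_\epsilon\zeta}}k$, so $L_\zeta{\downarrow_\epsilon}$ is stably isomorphic to the Carlson module $L_{\res_\epsilon\zeta}$ over $\fu(\epsilon)$; and $L_{\res_\epsilon\zeta}$ has no free summand, since any injective summand of it would also split off $\Omega^m_\epsilon k$, impossible as that module is a syzygy of $k$. By Krull--Schmidt we get an honest decomposition $L_\zeta{\downarrow_\epsilon}\cong L_{\res_\epsilon\zeta}\oplus F_\epsilon$ with $F_\epsilon$ free. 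Since $\dim L_{\res_\epsilon\zeta}=\dim\Omega^m_\epsilon k-1$ and $\dim L_\zeta=\dim\Omega^m_E k-1$ depend only on $r,m,p$ and on $n,m,p$ respectively, $\rk F_\epsilon$ equals a constant $c$, whence
\[\dim\Rad^j(L_\zeta{\downarrow_\epsilon})=\dim\Rad^j(L_{\res_\epsilon\zeta})+c\cdot\dim\Rad^j(\fu(\epsilon)),\]
with $\dim\Rad^j\fu(\epsilon)$ depending only on $r,j,p$. So everything reduces to: $\dim\Rad^j(L_{\res_\epsilon\zeta})$ is independent of $\epsilon$.

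This last point is where I expect the main difficulty to lie. Put $\Omega=\Omega^m_\epsilon k$ and $L=L_{\res_\epsilon\zeta}$, a codimension-one submodule of the fixed module $\Omega$ containing $\Rad\Omega$ (a surjection onto the simple module $k$ kills the radical). Then $\Rad^j L=\mathfrak m^jL$ with $\mathfrak m=\Rad\fu(\epsilon)$, and, using $\Rad\Omega\subseteq L$, one finds
\[\dim\Rad^j L=\dim\mathfrak m^{j+1}\Omega+\dim\operatorname{im}\bigl(\operatorname{gr}_j\fu(\epsilon)\otimes W_\epsilon\longrightarrow\operatorname{gr}_j\Omega\bigr),\]
where $\operatorname{gr}$ denotes the associated graded for the radical filtration and $W_\epsilon=L/\Rad\Omega$ is the hyperplane $\ker\overline{\res_\epsilon\zeta}$ in the top $\Omega/\Rad\Omega$. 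The first term and the target here depend only on $r,m,p$, so it remains to show that this image is all of $\operatorname{gr}_j\Omega$ for every hyperplane occurring as some $W_\epsilon$; granting that, $\dim\Rad^j(L_{\res_\epsilon\zeta})=\dim\Rad^j(\Omega^m_\epsilon k)$ for all $\epsilon$, which finishes the proof. I expect this surjectivity to be the crux, and the hypothesis to enter exactly here: a hyperplane $W_\epsilon$ for which it failed would single out a top generator of $\Omega^m_\epsilon k$ behaving like that of a trivial direct summand, a degeneracy that the forced non-nilpotency of every $\res_\epsilon\zeta$ should exclude for $r\ge2$ (for instance because the Tate cohomology of $\fu(\epsilon)$ has no units of positive degree when $r\ge2$).

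An alternative route for the final step is the sheaf-theoretic one: attach to $L_\zeta$ the coherent sheaf on $\bE(r,\fg_a^{\oplus n})=\Grass(r,n)$ defined by the universal elementary subalgebra --- the analogue on $\bE(r,\fg)$ of the global operator $\Theta$ of Section~\ref{sec:theta} --- and feed the short exact sequence $0\to L_\zeta\to\Omega^m_E k\to k\to0$ through the vector-bundle criterion generalizing Theorem~\ref{thm:bundles}. The sheaves attached to $\Omega^m_E k$ and to $k$ are locally free, those modules being of constant $r$-radical type; and the hypothesis provides precisely the rank-constancy along $\Grass(r,n)$ needed to conclude the same for $L_\zeta$. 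Either way, the heart of the matter is pinning down the common value of $\dim\Rad^j(L_{\res_\epsilon\zeta})$; the rest is formal.
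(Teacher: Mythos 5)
Your reduction is correct and is the right way to begin: the hypothesis on $Z(\zeta)$ is equivalent to $\res_{\epsilon}\zeta$ being non-nilpotent (in particular nonzero) for every $\epsilon\in\Grass(r,n)=\bE(r,\fg_a^{\oplus n})$; the splitting $L_\zeta{\downarrow_\epsilon}\cong L_{\res_\epsilon\zeta}\oplus(\text{free of constant rank})$ then holds; and your formula $\dim\Rad^jL=\dim\mathfrak m^{j+1}\Omega+\dim\operatorname{im}\bigl(\operatorname{gr}_j\fu(\epsilon)\otimes W_\epsilon\to\operatorname{gr}_j\Omega\bigr)$ is right. But the write-up stops exactly where the theorem begins: the surjectivity of $\operatorname{gr}_j\fu(\epsilon)\otimes W_\epsilon\to\operatorname{gr}_j\Omega^m_\epsilon k$ is asserted, not proved, and the heuristic offered for it (a failing hyperplane would ``single out a degenerate top generator'') is not an argument. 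Two remarks to pin down what is missing. First, your claim reduces to $j=1$: if $\mathfrak mL=\mathfrak m\Omega$ then $\mathfrak m^jL=\mathfrak m^j\Omega$ for all $j\geq1$, so the entire theorem comes down to proving $\Rad(L_{\res_\epsilon\zeta})=\Rad(\Omega^m_\epsilon k)$ for every $\epsilon$. Second, this equality is genuinely sensitive to non-nilpotency and requires explicit work with the minimal resolution. For example, take $p=3$, $r=2$, $m=2$, and let $u_1=(x^2,0)$, $u_2=(0,y^2)$, $u_3=(y,-x)$ be the minimal generators of $\Omega^2_\epsilon k\subset\fu(\epsilon)^{\oplus2}$, dual (up to sign) to $a_1$, $a_2$, $y_1y_2$ in $\HHH^2(\epsilon,k)$. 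Then $\mathfrak mu_1,\mathfrak mu_2\subseteq\Rad^2\Omega^2_\epsilon k$ (e.g.\ $yu_1=x^2u_3$ and $xu_2=-y^2u_3$), so $\operatorname{gr}_1\Omega^2_\epsilon k$ is spanned by $xu_3,yu_3$, and the multiplication map from $\epsilon\otimes W$ is onto precisely when $W\neq\langle u_1,u_2\rangle$, i.e.\ precisely when the class cutting out $W$ is non-nilpotent; for the nilpotent class $y_1y_2$ the image is zero. Exhibiting the mechanism by which non-nilpotency forces this surjectivity for all $m$, $r$, $p$ is the actual content of the proof in \cite{CFP2}, carried out there by explicit analysis of the (tensor product) minimal resolution of $k$ over $\fu(\epsilon)$; nothing in your proposal supplies it, and this survey states the theorem with a citation only, so there is no argument here to fall back on.

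The ``alternative route'' you sketch does not close the gap either, because it is circular: the kernel and image sheaves attached to a module are locally free over a locus precisely when the relevant fibrewise ranks are constant there, and that constancy is the statement being proved. The exact sequence $0\to L_\zeta\to\Omega^m_Ek\to k\to0$ does not transfer local freeness from $\Omega^m_Ek$ and $k$ to $L_\zeta$ without the fibrewise computation above.
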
 

Whereas modules $L_\zeta$ are never of constant Jordan type, for $r>1$ there are plenty of examples when the condition of the theorem is satisfied (see \cite[\S 3]{CFP2}).

\vspace{0.1in}
To construct algebraic vector bundles on $\bE(r,\fg)$ corresponding to representations of constant $(r,j)$-radical or socle rank, we need 
to replace the operator $\Theta$ for $r=1$ with a vector operator $(\Theta_1, \ldots, \Theta_r)$. The operators $\Theta_i$ are not defined globally  on $\bE(r,\fg)$, and there are two equivalent approaches to their construction that lead to associated kernel, image, and cokernel sheaves which {\it are} defined globally on $\bE(r, \fg)$: 
 
\vspace{0.05in} 
(1) {\it Local construction}: this approach involves patching together images (respectively, kernels or cokernels)   of explicit 
linear maps on an affine covering of $\bE(r, \fg).$

\vspace{0.05in}
(2) {\it Equivariant descent}: this construction employs homogeneous linear operators  
$$\Theta_i: M \otimes k[\cN_p^r(\fg)] \to M \otimes k[\cN_p^r(\fg)][1]$$ 
for $i=1, \ldots, r$ 
where $\cN_p^r(\fg)$ is the variety of $p$-nilpotent commuting elements of $\fg$. The kernels and images of these operators are $\GL_r$-equivariant which allows to pass to sheaves on $\bE(r, \fg)$.\\[0.5pt]

Details can be found in \cite{CFP2}, \cite{CFP3}.  The outcome  of these somewhat technically involved constructions is the following theorem.
For a quasi-projective variety $X$ we denote by $\Coh(X)$ the category of coherent $\cO_X$-modules. 

\begin{theorem}[\S 5 \cite{CFP3}]\label{thm:functors} 
 Let $\fg$ be a restricted Lie algebra. There exist functors
$$\cIm^j, \cKer^j: \fu(\fg)-{\rm mod} \to \Coh(\bE(r, \fg))$$
such that the fiber of $\cIm^j(M)$ (respectively. $\cKer^j(M)$) for a  restricted $\fg$-module $M$ at a generic point 
$\epsilon \in \bE(r, \fg)$ is naturally identified with $\Rad^j({M\downarrow_{\epsilon}})$ (respectively, $\Soc^j(M{\downarrow_{\epsilon}})$). 
\sloppy{

}

More generally, for any locally closed subset $X \in \bE(r, \fg)$,  there exist functors 
$$\cIm^{j, X}, \cKer^{j, X}: \fu(\fg)-{\rm mod} \to \Coh(X)$$
such that the fiber of $\cIm^{j, X}(M)$ (respectively, $\cKer^{j, X}(M)$) at a generic point 
$\epsilon \in X$ is naturally identified with $\Rad^j(M{\downarrow_{\epsilon}})$ (respectively, $\Soc^j(M{\downarrow_{\epsilon}})$). 
\sloppy{

}
\end{theorem}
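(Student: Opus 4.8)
The plan is to carry out the \emph{equivariant descent} construction outlined after the statement. The geometric backbone is the variety $\cN_p^r(\fg) \subset \fg^{\times r}$ of $r$-tuples of pairwise commuting $[p]$-nilpotent elements of $\fg$, together with its open subvariety $U \subset \cN_p^r(\fg)$ of the \emph{linearly independent} such tuples. The group $\GL_r$ acts on $U$ by change of basis. Sending a tuple to its linear span identifies $U$ with the fibre product $F \times_{\Grass(r,\fg)} \bE(r,\fg)$, where $F \to \Grass(r,\fg)$ is the Zariski-locally trivial bundle of linearly independent $r$-frames in $\fg$: indeed, the span of commuting $[p]$-nilpotent elements is automatically an elementary subalgebra, since for commuting $x,y$ the cross-terms $s_i(x,y)$ in axiom (3) vanish and hence $(\lambda x+\mu y)^{[p]} = \lambda^p x^{[p]} + \mu^p y^{[p]}$. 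Thus $U \to \bE(r,\fg)$ is a principal $\GL_r$-bundle, so $\GL_r$-equivariant coherent sheaves on $U$ descend, functorially, to coherent sheaves on $\bE(r,\fg)$; the same holds after base change along any locally closed $X \hookrightarrow \bE(r,\fg)$.

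Next I would produce the universal operators. The tautological $r$-tuple of commuting $[p]$-nilpotent elements lies in $\fg \otimes k[\cN_p^r(\fg)]$; tensoring with a $\fu(\fg)$-module $M$ yields pairwise commuting, $[p]$-nilpotent, $k[\cN_p^r(\fg)]$-linear operators $\Theta_1,\dots,\Theta_r$ on $M\otimes k[\cN_p^r(\fg)]$, each homogeneous of degree $1$ for the grading coming from the cone structure on $\cN_p^r(\fg)$. Restricting to $U$, and packaging $(\Theta_1,\dots,\Theta_r)$ into a single $\GL_r$-equivariant map involving the standard representation of $\GL_r$ and suitable Serre twists of $\cO_U$, I would define for each $j$ the subsheaf $\cIm^j(M) \subseteq M \otimes \cO_U$ as the image of the map assembled from all monomials $\Theta^a$ with $|a| = j$, and $\cKer^j(M) = \bigcap_{|a|=j}\Ker\Theta^a \subseteq M\otimes\cO_U$. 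Fibrewise over a point with chosen basis $(x_1,\dots,x_r)$ these are precisely $\Rad^j(M{\downarrow_{\epsilon}}) = (x_1,\dots,x_r)^j M$ and $\Soc^j(M{\downarrow_{\epsilon}}) = \{m : (x_1,\dots,x_r)^j m = 0\}$, which depend only on the span of the $\Theta_i$ and not on the basis, so the subsheaves are genuinely $\GL_r$-equivariant. Descending them along $U \to \bE(r,\fg)$ gives coherent sheaves $\cIm^j(M), \cKer^j(M)$ on $\bE(r,\fg)$; a morphism $M \to M'$ of $\fu(\fg)$-modules commutes with the $\Theta_i$ and hence induces morphisms of all these sheaves, which is the functoriality. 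The locally closed version follows verbatim after replacing $U$ by its preimage over $X$.

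For the fibre identification I would use semicontinuity. The function $\epsilon \mapsto \dim_k \Rad^j(M{\downarrow_{\epsilon}})$ on $\bE(r,\fg)$ (or on $X$) is the rank of an algebraic family of linear maps, hence upper semicontinuous, hence constant on a dense open subset. On that locus the formation of the image sheaf commutes with passage to fibres, so the fibre of $\cIm^j(M)$ at a generic $\epsilon$ is canonically $\Rad^j(M{\downarrow_{\epsilon}})$; dually, the fibre of $\cKer^j(M)$ at a generic $\epsilon$ is $\Soc^j(M{\downarrow_{\epsilon}})$. When $M$ has constant $(r,j)$-radical (respectively socle) rank this locus is all of $\bE(r,\fg)$ and the sheaf is locally free, recovering the vector bundle statements.

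The step I expect to be the main obstacle is the equivariant bookkeeping in the middle paragraph: the individual operators $\Theta_i$ are \emph{not} well defined on $\bE(r,\fg)$, only the $\GL_r$-invariant combinations are, so one must package $(\Theta_1,\dots,\Theta_r)$ and their degree-$j$ monomials into honest morphisms of $\GL_r$-equivariant, correctly $\cO_U(\ast)$-twisted sheaves before descending, and getting the linearization and the twists simultaneously right is the fiddly part. The alternative \emph{local} construction --- trivializing the tautological rank-$r$ subbundle over an affine cover of $\bE(r,\fg)$, forming images and kernels of explicit matrices there, and gluing --- trades this for checking that the glued sheaves are independent of the trivializations, which is where the comparable amount of work lies. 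Both routes are worked out in detail in \cite{CFP2} and \cite{CFP3}.
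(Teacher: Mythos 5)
Your proposal follows exactly the \emph{equivariant descent} route that the paper itself sketches just before the theorem and delegates to \S 5 of \cite{CFP3}: the universal commuting $[p]$-nilpotent operators $\Theta_1,\dots,\Theta_r$ on $M\otimes k[\cN_p^r(\fg)]$, the $\GL_r$-equivariance of the degree-$j$ image and kernel subsheaves, descent along the frame bundle over $\bE(r,\fg)$ (or over a locally closed $X$), and the generic-fibre identification via semicontinuity, so this is essentially the same argument and it is correct. The only quibble is a sign of semicontinuity: $\dim\Rad^j(M{\downarrow_{\epsilon}})$ is a rank and hence \emph{lower} semicontinuous (it is $\dim\Soc^j$ that is upper semicontinuous), but either way each is constant on a dense open subset, which is all your argument uses.
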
  
 
For an algebraic group $G$, the variety $\bE(r, \fg)$ for $\fg = \Lie G$ comes equipped with an action of $G$. The second part of Theorem~\ref{thm:functors} allows one to construct sheaves on $G$-orbits of $\bE(r, \fg)$. 
Under this construction, modules of constant $(r,j)$ rank lead to algebraic vector bundles, in analogy with modules of constant $j$-rank.

\begin{proposition}[5.20, 5.21 \cite{CFP3}]
Let $M$ be a restricted $\fg$-module.  
If $M$ is a module of constant $(r,j)$-{\it radical} rank (respectively,  constant $(r,j)$-{\it socle} rank) then $\cIm^j(M)$  (respectively, $\cKer^j(M)$) is an {\it algebraic vector bundle} on $\bE(r,\fg)$ 
with fiber at $\epsilon \in \bE(r,\fg)$ naturally isomorphic to $\Rad^j(M\downarrow_\epsilon)$ (respectively, $\Soc^j(M\downarrow_\epsilon)$).

\vspace{0.05in}
More generally, let $X \subset \bE(r, \fg)$ be  a locally closed subset. If $\dim  \Rad^j(M\downarrow_\epsilon)$  (respectively, $\dim \Soc^j(M\downarrow_\epsilon)$) is constant for all $\epsilon \in X$, then $\cIm^{j,X}(M)$  (respectively, $\cKer^{j, X}(M)$) is an {\it algebraic vector bundle} on $X$  with fiber at $\epsilon \in X$ naturally isomorphic to $\Rad^j(M\downarrow_\epsilon)$ (respectively, $\Soc^j(M\downarrow_\epsilon)$). 
\end{proposition}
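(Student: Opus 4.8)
The plan is to reduce the statement to the standard fact that a morphism of locally free sheaves with locally constant fibrewise rank has locally free kernel, image and cokernel, whose formation commutes with base change. First I would unwind the construction behind Theorem~\ref{thm:functors} (carried out in \cite{CFP2}, \cite{CFP3}). Over each chart $U$ of the affine covering of $\bE(r,\fg)$ used there the relevant Serre-type twists by the tautological subbundle of $\Grass(r,\fg)$ trivialise, so one has honest $\cO_U$-linear degree-one operators $\Theta_1,\dots,\Theta_r$ on the free sheaf $\wt M|_U=M\otimes\cO_U$; writing $\Theta^{\mathbf a}=\Theta_1^{a_1}\cdots\Theta_r^{a_r}$ for a multi-index $\mathbf a$, one gets $\cO_U$-linear maps of free sheaves
\[
\Phi^j\colon \bigoplus_{|\mathbf a|=j}\wt M|_U \longrightarrow \wt M|_U,\qquad \Upsilon^j\colon \wt M|_U\longrightarrow \bigoplus_{|\mathbf a|=j}\wt M|_U,
\]
given by $(m_{\mathbf a})\mapsto\sum_{|\mathbf a|=j}\Theta^{\mathbf a}m_{\mathbf a}$ and $m\mapsto(\Theta^{\mathbf a}m)_{|\mathbf a|=j}$, with $\cIm^j(M)|_U=\Im\Phi^j$ and $\cKer^j(M)|_U=\Ker\Upsilon^j$; the $\GL_r$-equivariant operators on $M\otimes k[\cN_p^r(\fg)]$ give an equivalent presentation. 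The one input I would take from the construction is the \emph{pointwise} identification: for every $\epsilon\in\bE(r,\fg)$ the operators $\Theta_i\otimes k(\epsilon)$ act on $M_{k(\epsilon)}$ as a generating set of $\Rad\,\fu(\epsilon)$ — the higher-rank analogue of the equality $\theta_\nu=\alpha_{k(\nu)}(x)$ of Section~\ref{sec:theta} — so that $\Im(\Phi^j\otimes k(\epsilon))=\Rad^j(M{\downarrow_\epsilon})$ and $\Ker(\Upsilon^j\otimes k(\epsilon))=\Soc^j(M{\downarrow_\epsilon})$ inside $M_{k(\epsilon)}$.

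Granting this, the argument is short. If $M$ has constant $(r,j)$-radical rank then $\rk(\Phi^j\otimes k(\epsilon))=\dim\Rad^j(M{\downarrow_\epsilon})$ is independent of $\epsilon$, so $\Phi^j$ has constant rank; by the constant-rank theorem for morphisms of vector bundles, $\cIm^j(M)=\Im\Phi^j$ is a subbundle of $\wt M|_U$ whose formation commutes with base change, hence its fibre at $\epsilon$ is canonically $\Im(\Phi^j\otimes k(\epsilon))=\Rad^j(M{\downarrow_\epsilon})$. Dually, constant $(r,j)$-socle rank makes $\rk(\Upsilon^j\otimes k(\epsilon))=\dim M-\dim\Soc^j(M{\downarrow_\epsilon})$ independent of $\epsilon$, so $\Upsilon^j$ has constant rank and the same theorem gives that $\cKer^j(M)=\Ker\Upsilon^j$ is a subbundle with fibre $\Ker(\Upsilon^j\otimes k(\epsilon))=\Soc^j(M{\downarrow_\epsilon})$ at $\epsilon$. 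We only use that these are maps of vector bundles of constant rank, so no reducedness hypothesis on $\bE(r,\fg)$ is needed. As both conclusions are local, gluing over the covering yields the statement on all of $\bE(r,\fg)$; and for a locally closed $X\subset\bE(r,\fg)$ one repeats the argument with every sheaf restricted to $X$, the hypothesis that $\dim\Rad^j(M{\downarrow_\epsilon})$ (resp.\ $\dim\Soc^j(M{\downarrow_\epsilon})$) be constant on $X$ being exactly the constant-rank hypothesis for $\Phi^j|_X$ (resp.\ $\Upsilon^j|_X$), and the constant-rank theorem holding over an arbitrary base.

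The genuine difficulty is therefore entirely in the pointwise identification black-boxed above: that the locally defined $\Theta_i$ — equivalently, the $\GL_r$-equivariant operators on $M\otimes k[\cN_p^r(\fg)]$ — specialise at \emph{every} $\epsilon$, not merely at the generic point recorded in Theorem~\ref{thm:functors}, to generators of $\Rad\,\fu(\epsilon)$ acting on $M$, so that $\cIm^j(M)$ and $\cKer^j(M)$ really compute radicals and socles fibre by fibre. Proving this is where one must use the two constructions of Theorem~\ref{thm:functors} in earnest: in the patching picture, checking compatibility of the explicit matrices on overlaps and that the charts model $\bE(r,\fg)$ as advertised; in the equivariant-descent picture, checking $\GL_r$-equivariance of the kernel and image sheaves on $\cN_p^r(\fg)$ so that they descend to $\bE(r,\fg)$ with the stated fibres. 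Once that compatibility is in hand — it is part of the content of Theorem~\ref{thm:functors} and its proof in \cite{CFP2}, \cite{CFP3} — the passage to vector bundles under a constant-rank hypothesis is formal.
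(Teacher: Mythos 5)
The survey itself gives no proof of this proposition --- it is quoted from \cite[5.20, 5.21]{CFP3} --- but your argument is essentially the one given there: realize $\cIm^j(M)$ and $\cKer^j(M)$ on affine charts (or $\GL_r$-equivariantly on $\cN_p^r(\fg)$) as $\Im\Phi^j$ and $\Ker\Upsilon^j$ for morphisms of free sheaves built from the operators $\Theta_1,\dots,\Theta_r$, check that at \emph{every} point $\epsilon$ these operators specialize to a basis of $\epsilon$ so that the fibrewise image and kernel are $\Rad^j(M\downarrow_\epsilon)$ and $\Soc^j(M\downarrow_\epsilon)$, and then apply the constant-fibre-rank criterion for local freeness and base change. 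You are also right to isolate the pointwise (not merely generic-point) identification as the only non-formal input; that is exactly what the local-patching and equivariant-descent constructions behind Theorem~\ref{thm:functors} supply. One correction: your remark that ``no reducedness hypothesis on $\bE(r,\fg)$ is needed'' is false as stated. Over $\Spec k[\varepsilon]/(\varepsilon^2)$ the endomorphism of $\cO$ given by multiplication by $\varepsilon$ has constant fibrewise rank $0$, yet its kernel, image and cokernel are all isomorphic to $k$ and hence not locally free; the implication ``locally constant fibre rank $\Rightarrow$ kernel, image, cokernel locally free and compatible with base change'' genuinely requires a reduced base. This does not damage your proof, because $\bE(r,\fg)$ is a variety and a locally closed subset carries its reduced induced structure, but the reducedness is being used, not avoided.
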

We finish with some computational examples of the kernel, image, and cokernel sheaves on $\bE(r, \fg)$.  
\begin{example} 
\label{ex:sl}  Let $\fg = \fsl_{2n}$, and let $V$ be the defining representation of $\fsl_{2n}$. Assume $p>2n-2$. We have $\bE(n^2, \fsl_{2n}) \simeq \Grass(n, 2n)$ by Theorem~\ref{thm:sl}. Let $\gamma_n$ be the canonical ($\sim$ tautological) rank $n$ vector subbundle on $\Grass(n, 2n)$.  Then
\begin{enumerate}
\item $\cIm^1(V) = \gamma_n$;
\end{enumerate}
For any $j \leq m$,
\begin{itemize}\item[(2)] 
$\cIm^{j}(V^{\otimes j})  \simeq \gamma_n^{\otimes j}$,
\item[(3)]
$\cIm^{j}(S^j(V)) \simeq S^j(\gamma_n),$
\item[(4)] 
$\cIm^{n}(\Lambda^n(V))  \simeq \Lambda^n(\gamma_n).$
\end{itemize}  
\end{example}
\noindent Similar calculations can be done for other Grassmannians $\Grass(r, n)$ by realizing them as $\GL_n$-orbits in $\bE(r(n-r), \fsl_n)$ (\cite[\S 6]{CFP3}).  

\begin{example}{\cite[6.9]{CFP3}} Let $\fg = \fsp_{2n}$, let $V$ be the defining representation for $\fsp_{2n}$, and assume that $p>3$. 
By theorem~\ref{thm:sp}, $\bE\left({{n+1} \choose 2}, \fsp_{2n}\right) = \LG(n,n)$, the Lagrangian Grassmannian.
We denote by $\gamma_n^{sp}$ the canonical ($\sim$ tautological) vector subbundle of rank $n$ on $\LG(n,n)$.  
With this notation we have exactly the same results as  in  Example~\ref{ex:sl}:
\begin{enumerate}
\item $\cIm^1(V) = \gamma_n^{sp}$; 
\end{enumerate}
For any $j \leq n$,
\begin{itemize}\item[(2)] 
$\cIm^{j}(V^{\otimes j})  \simeq (\gamma_n^{sp})^{\otimes j}$,
\item[(3)]
$\cIm^{j}(S^j(V)) \simeq S^j(\gamma_n^{sp}),$
\item[(4)] 
$\cIm^{n}(\Lambda^n(V))  \simeq \Lambda^n(\gamma_n^{sp}).$
\end{itemize}  
\end{example}

\noindent In our last example we demonstrate that using representations of Lie algebras one can easily realize tangent and cotangent bundles on Grassmannians.  As with the examples above, it works very  similarly for the special linear and the symplectic case. 
\begin{example} 
Let $\fg = \fsl_{2n}$ (respectively, $\fg = \fsp_{2n}$). Consider $\fg$ acting on itself via the adjoint representation.   Let
$X = \bE({n^2},\fsl_{2n}) \simeq \Grass(n, 2n)$ (respectively, $X = \bE\left({{n+1} \choose 2},\fsp_{2n}\right) \simeq \LG(n, n)$). 
\begin{enumerate}
\item $\cCoker(\fg) \simeq T_X$, the tangent bundle on $X$, 
\item $\cIm^2(\fg) \simeq \Omega_X$, the cotangent bundle on $X$. 
\end{enumerate}
\end{example}
For other simple algebraic groups $G$ one can make analogous calculations for bundles on homogeneous spaces associated with cominuscule parabolics by considering  $G$-orbits   of $\bE(r, \Lie G)$ (see \cite{CFP3}).



\frenchspacing

\end{document}